\documentclass[a4paper,11pt,reqno]{amsart}
\usepackage{amsmath,amssymb,url,stmaryrd}
\usepackage{lmodern}
\usepackage[T1]{fontenc}
\usepackage[top=30truemm,bottom=30truemm,left=20truemm,right=20truemm]{geometry}
\usepackage{tikz}
\usetikzlibrary{positioning}
\usetikzlibrary{arrows.meta}
\usetikzlibrary{calc}
\usepackage{color}
\usepackage{enumitem}
\usepackage{hyperref}

\newenvironment{enumerate2}{\begin{enumerate}[label=\textup{(\alph*)}]}
{\end{enumerate}}

\date{\today}

\newtheorem{Thm}{Theorem}[section]
\newtheorem{Cor}[Thm]{Corollary}
\newtheorem{Lem}[Thm]{Lemma}
\newtheorem{Prop}[Thm]{Proposition}
\newtheorem{Def-Lem}[Thm]{Definition-Lemma}
\newtheorem{Def-Prop}[Thm]{Definition-Proposition}
\newtheorem{Def-Thm}[Thm]{Definition-Theorem}
\newtheorem{Conj}[Thm]{Conjecture}

\theoremstyle{definition}
\newtheorem{Def}[Thm]{Definition}

\newtheorem{Ex}[Thm]{Example}
\newtheorem{Nota}[Thm]{Notation}

\newcommand{\calC}{\mathcal{C}}
\newcommand{\calD}{\mathcal{D}}
\newcommand{\calF}{\mathcal{F}}
\newcommand{\calH}{\mathcal{H}}

\newcommand{\calT}{\mathcal{T}}
\newcommand{\calU}{\mathcal{U}}
\newcommand{\calV}{\mathcal{V}}
\newcommand{\calW}{\mathcal{W}}

\newcommand{\ovcalF}{\overline{\calF}}
\newcommand{\ovcalT}{\overline{\calT}}

\newcommand{\sfD}{\mathsf{D}}
\newcommand{\sfK}{\mathsf{K}}

\newcommand{\rmb}{\mathrm{b}}

\newcommand{\R}{\mathbb{R}}
\newcommand{\Z}{\mathbb{Z}}

\DeclareMathOperator{\Hom}{\mathsf{Hom}}
\DeclareMathOperator{\End}{\mathsf{End}}

\DeclareMathOperator{\Ext}{\mathsf{Ext}}

\DeclareMathOperator{\rad}{\mathsf{rad}}
\DeclareMathOperator{\soc}{\mathsf{soc}}

\DeclareMathOperator{\Ker}{\mathsf{Ker}}
\DeclareMathOperator{\Image}{\mathsf{Im}} \renewcommand{\Im}{\Image}

\DeclareMathOperator{\module}{\mathsf{mod}} \renewcommand{\mod}{\module}

\DeclareMathOperator{\proj}{\mathsf{proj}}

\DeclareMathOperator{\add}{\mathsf{add}}

\DeclareMathOperator{\Filt}{\mathsf{Filt}}
\DeclareMathOperator{\Fac}{\mathsf{Fac}}
\DeclareMathOperator{\Sub}{\mathsf{Sub}}

\DeclareMathOperator{\simple}{\mathsf{sim}}

\DeclareMathOperator{\tors}{\mathsf{tors}}
\DeclareMathOperator{\ftors}{\mathsf{f-tors}}

\DeclareMathOperator{\ftorf}{\mathsf{f-torf}}
\DeclareMathOperator{\brick}{\mathsf{brick}}

\DeclareMathOperator{\smc}{\mathsf{smc}}
\DeclareMathOperator{\twosmc}{2\mathsf{-smc}}
\DeclareMathOperator{\silt}{\mathsf{silt}}
\DeclareMathOperator{\twosilt}{2\mathsf{-silt}}
\DeclareMathOperator{\psilt}{\mathsf{psilt}}
\DeclareMathOperator{\twopsilt}{2\mathsf{-psilt}}

\DeclareMathOperator{\fLsbrick}{\mathsf{f_L-sbrick}}

\DeclareMathOperator{\fRsbrick}{\mathsf{f_R-sbrick}}

\DeclareMathOperator{\sfT}{\mathsf{T}}
\DeclareMathOperator{\sfF}{\mathsf{F}}

\DeclareMathOperator{\SH}{\mathsf{SH}}
\DeclareMathOperator{\reduc}{\mathsf{red}}

\DeclareMathOperator{\Hasse}{\mathsf{Hasse}}

\renewcommand{\epsilon}{\varepsilon}
\renewcommand{\phi}{\varphi}
\renewcommand{\Gamma}{\varGamma}
\renewcommand{\Phi}{\varPhi}
\renewcommand{\Psi}{\varPsi}

\numberwithin{equation}{section}

\begin{document}
\title[Mutations of simple-minded collections revisited]
{Mutations of simple-minded collections revisited}

\author{Sota Asai} 
\address{Sota Asai: Graduate School of Mathematical Sciences,
University of Tokyo,  
3-8-1 Komaba, Meguro-ku, Tokyo-to, 153-8914, Japan}
\email{sotaasai@g.ecc.u-tokyo.ac.jp}

\begin{abstract}
Simple-minded collections in the bounded derived category $\sfD^\rmb(\mod A)$
are necessary tools in tilting theory of finite dimensional algebras,
as well as silting complexes 
in the perfect derived category $\sfK^\rmb(\proj A)$.
In this paper, we give an explicit mutation formula of
simple-minded collections at arbitary direct summands,
which is compatible with that of silting complexes.
Though this mutation formula may be known to experts, 
we provide a thorough proof for completeness and future reference.
Then we use our formula to revisit $\tau$-tilting theory
in terms of mutations of 2-term simple-minded collections,
including their relationship with maximal/minimal completions 
of 2-term presilting complexes and $\tau$-tilting reduction.
\end{abstract}

\maketitle 
\setcounter{tocdepth}{1}
\tableofcontents

\section{Introduction}

In the bounded derived category $\sfD(A)=\sfD^\rmb(\mod A)$
of a finite dimensional algebra $A$ over a field $K$,
\emph{simple-minded collections} introduced by Al-Nofayee \cite{Al-Nofayee}
are a notion generalizing the set of simple modules.
They have been well-studied in many works including recent ones 
\cite{Fushimi,GZ,HW,Jin,Plogmann,SZ}.
By \cite{Al-Nofayee,BBD,IY,KN,KY},
the filtration closure of a simple-minded collection 
is the heart of some bounded \emph{t-structure} in $\sfD(A)$,
and it is equivalent to the module category $\mod E$
for a certain finite dimensional algebra $E$.
Thus simple-minded collections are important
to relate the module categories of many algebras.

This phenomenon is based on the bijections by Koenig-Yang \cite{KY}
between the set $\smc A$ of simple-minded collections in $\sfD(A)$ and 
the set $\silt A$ of \emph{silting complexes} 
in the perfect derived category $\sfK^\rmb(\proj A)$.
Silting complexes introduced by Keller-Vossieck \cite{KV}
contain \emph{tilting complexes}, 
which characterize \emph{derived equivalences} of algebras by \cite{Rickard}.
Unlike tilting complexes, 
silting complexes are closed under \emph{mutations};
that is, we can always change a direct summand 
of a given silting complex in a certain way to get another silting complex.
This was motivated by mutations of seeds in the \emph{cluster theory}
introduced by Fomin-Zelevinsky \cite{FZ}.
Though \emph{irreducible mutations} exchaging 
only one indecomposable direct summand are typical,
Aihara-Iyama \cite[Definition 2.30, Theorem 2.31]{AI} showed that
mutations of a silting complex at an arbitrary direct summand 
are well-defined by the next formula.

\begin{Def-Prop}[Definition-Proposition \ref{Def-Prop_silt_mut}]
Let $S=\bigoplus_{i=1}^n S_i \in \silt A$,
and $J \subset \{1,\ldots,m\}$ be a subset.
Set $S_J:=\bigoplus_{j \in J} S_j$ and $U:=S/S_J$.
\begin{enumerate}
\item
For each $j \in J$, there exists a triangle 
\begin{align*}
S_j \xrightarrow{f_j} U'_j \xrightarrow{f'_j} T_j \to S_j[1]
\end{align*} 
in $\sfK^\rmb(\proj A)$
with $f_j$ a miminal left $(\add U)$-approximation,
$f'_j$ a minimal right $(\add U)$-approximation, and
$T_j$ indecomposable.
\item
We set $T_i:=S_i$ if $i \notin J$, and take $T_j$ for $j \in J$ in (1).
Then the \emph{left mutation} of $S$ at $S_J$
is defined as
\begin{align*}
\mu_J^-(S):=\bigoplus_{i=1}^n T_i \in \silt A,
\end{align*}
including the indices of the indecomposable direct summands.
\end{enumerate}
\end{Def-Prop}

If a silting complex $S \in \silt A$ is sent
to a simple-minded collection $X \in \smc A$
under the bijection $\SH \colon \silt A \to \smc A$,
there is a natural bijection
between the indecomposable direct summands of $S$ and those of $X$.
More precisely, there exist decompositions
$S=\bigoplus_{i=1}^n S_i$ and $X=\bigoplus_{i=1}^n X_i$
into indecomposable direct summands which satisfy the duality
\begin{align*}
\Hom_{\sfD(A)}(S_i,X_{i'}[l]) \simeq
\begin{cases}
\End_{\sfD(A)}(X_i) \quad \text{as left $\End_{\sfD(A)}(X_i)$-modules} 
& ((i',l)=(i,0)) \\
0 & ((i',l) \ne (i,0))
\end{cases};
\end{align*}
see Definition-Proposition \ref{Def-Prop_SH} for detail.
Unless otherwise stated, we assume such decompositions of $S$ and $X$.

Thus there surely exist mutations of simple-minded collections
which are compatible with those of silting complexes,
and \cite{BDH,BCPW,BY,KY} already give interesting observations on them.
However, we could not find a reference for 
the precise statement with enough proof,
which are essential to develop our theory \cite{A-interval}
of interval neighborhoods in the real Grothendieck group $K_0(\proj A)_\R$.
Thus, for the sakeness of completeness and future reference,
we decided to write this paper to give a thorough proof of 
the following properties,
though they may be known to experts.
Section \ref{Sec_general} is devoted to proving them.

\begin{Def-Thm}[Theorems \ref{Thm_smc_mut_approx}, \ref{Thm_smc_mut},
Definition-Proposition \ref{Def-Prop_smc_mut}]
\label{simu mut smc intro}
Let $S \in \silt A$, $X:=\SH(S)=\bigoplus_{i=1}^n X_i \in \smc A$
and $J \subset \{1,\ldots,n\}$.
We set $X_J:=\bigoplus_{j \in J} X_j$ and
$\calW_J:=\Filt \{ X_j \mid j \in J \} \subset \sfD(A)$. 
\begin{enumerate}
\item
If $i \notin J$, then there exists a triangle 
\begin{align*}
X_i[-1]\xrightarrow{g_i} W_i \xrightarrow{g'_i} Y_i \to X_i
\end{align*} 
in $\sfD(A)$ with $g_i$ a minimal left $\calW_J$-approximation,
$g'_i$ a minimal right $\calW_J$-approximation.
Moreover, setting $Y_j:=X_j[1]$ for any $j \in J$,
the \emph{left mutation} $\mu_J^-(X)$ of $X$ at $X_J$ 
is well-defined by
\begin{align*}
\mu_J^-(X):=\bigoplus_{i=1}^n Y_i \in \smc A.
\end{align*}
\item
Set $T:=\mu_J^-(S)$.
Then we have 
\begin{align*}
\SH(T)=\bigoplus_{i=1}^n Y_i=\mu_J^-(X).
\end{align*}
\end{enumerate}
\end{Def-Thm}

In the remaining sections, we focus on 2-term notions.
A simple-minded collection $X=\bigoplus_{i=1}^n X_i \in \smc A$
is said to be \emph{2-term} if $H^l(X)=0$ 
for any $l \in \Z \setminus \{-1,0\}$.
In this case, for each $i \in \{1,\ldots,n\}$,
exactly one of $X_i^+:=H^0(X_i)$ and $X_i^-:=H^{-1}(X_i)$
is a brick, and the other is zero \cite{BY},
cited as Proposition \ref{Prop_stalk} in this paper.
In other words, each $X_i$ belongs to $\mod A$ or $(\mod A)[1]$.

As proved in \cite{AIR,BY,A-semi},
there are many bijections with 2-term simple-minded collections.
The bijection $\SH \colon \silt A \to \smc A$ is restricted to a bijection
$\SH \colon \twosilt A \to \twosmc A$ between the 2-term sliting complexes
and the 2-term simple-minded collections.
Moreover they have one-to-one correspondences with
\emph{functorially finite torsion classes}
and \emph{left finite semibricks} in the module category $\mod A$.
Namely, if $S \in \twosilt A$ and $X=\SH(S) \in \twosmc A$,
then $\Fac H^0(S)$ is a functorially finite torsion class,
and it is the smallest torsion class $\sfT(X^+)$
containing the left finite semibrick $X^+$.
See Propositions \ref{Prop_silt_tors}, 
\ref{Prop_silt_smc_2} and \ref{Prop_sbrick} for detail.

In general,
a left mutation of a 2-term silting complex is not necessarily 2-term.
Thus we give the following characterizations of 
when left mutations can be done within $\twosilt A$,
by using the two functorially finite torsion pairs
\begin{align*}
(\ovcalT_U,\calF_U):=({^\perp H^{-1}(\nu U)},\Sub H^{-1}(\nu U)), \quad
(\calT_U,\ovcalF_U):=(\Fac H^0(U),{H^0(U)^\perp})
\end{align*}
associated to each 2-term presilting complex $U$.
For the case $\#J=1$, this result was obtained in \cite{AIR, A-semi}.

\begin{Thm}[Theorem \ref{Thm_within}]
Let $S=\bigoplus_{i=1}^n S_i \in \twosilt A$, and $J \subset \{1,\ldots,n\}$.
Set $X=\bigoplus_{i=1}^n X_i := \SH(S) \in \twosmc A$,
$T:=\mu_J^-(S)$, $U:=\bigoplus_{i \notin J}S_i$.
Then the following conditions are equivalent.
\begin{enumerate2}
\item
The silting complex $T$ satisfies $T \in \twosilt A$.
\item
The simple-minded collection $\SH(T)$ satisfies $\SH(T) \in \twosmc A$.
\item
For any $j \in J$, we have $X_j^+ \ne 0$; or equivalently, $X_j \in \mod A$.
\item
The torsion pairs $(\ovcalT_S,\calF_S)$ 
coincide with $(\ovcalT_U,\calF_U)$.
\end{enumerate2}
\end{Thm}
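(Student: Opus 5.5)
The equivalence (a)$\Leftrightarrow$(b) is immediate. The bijection $\SH\colon\silt A\to\smc A$ restricts to a bijection $\twosilt A\to\twosmc A$ (Proposition \ref{Prop_silt_smc_2}). Moreover $\SH(T)=\mu_J^-(X)$ by the Definition-Theorem. Hence $T$ is 2-term if and only if $\SH(T)$ is 2-term.

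For (b)$\Leftrightarrow$(c) I use the explicit description of $\mu_J^-(X)$. For $j\in J$ we have $Y_j=X_j[1]$. Since $X_j$ is a stalk in degree $0$ or $-1$ (Proposition \ref{Prop_stalk}), $X_j[1]$ is 2-term exactly when $X_j\in\mod A$. This gives (b)$\Rightarrow$(c).

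Conversely, assume (c). Then $\calW_J=\Filt\{X_j\}_{j\in J}\subset\mod A$, so $\calW_J$ is concentrated in degree $0$. For $i\notin J$ I need $Y_i$ to be 2-term, and I use the triangle $W_i\to Y_i\to X_i\to W_i[1]$.
\begin{itemize}
\item If $X_i\in\mod A$, the long exact cohomology sequence puts $H^l(Y_i)$ in degrees $\{-1,0\}$, so $Y_i$ is 2-term.
\item If $X_i=M[1]$ with $M\in\mod A$, the connecting map $X_i\to W_i[1]$ is a map $M[1]\to W_i[1]$, that is, a module map $M\to W_i$. Then $Y_i$ is the cone of $X_i[-1]\to W_i$, whose cohomology lies in degrees $-1$ and $0$.
\end{itemize}
So all $Y$'s lie in $\mod A$ or $(\mod A)[1]$ up to checking amplitudes, and $\mu_J^-(X)$ is 2-term. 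In fact $Y_i$ must itself be a stalk by Proposition \ref{Prop_stalk}, but we only need the 2-term property.

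For (a)/(c)$\Leftrightarrow$(d) I work with torsion pairs. Recall that $\ovcalT_S={}^\perp H^{-1}(\nu S)$, and for $S$ 2-term silting this equals $\calT_S=\sfT(X^+)$ (Proposition \ref{Prop_silt_tors}). Similarly, $\ovcalT_U$ is the maximal torsion class attached to the presilting complex $U$, namely the one given by the Bongartz-type completion of $U$. So (d) says that $S$ is the Bongartz completion of $U$. I then use the known fact for 2-term presilting $U$: the 2-term silting complexes containing $U$ form an interval whose top is the Bongartz completion, with torsion class $\ovcalT_U$, and whose bottom is the co-Bongartz completion, with torsion class $\calT_U$.

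The strategy is as follows.
\begin{enumerate}
\item First show that (c) is equivalent to $\ovcalT_S=\ovcalT_U$. Since $\calF_S=\ovcalT_S^{\perp}$, equality of torsion classes gives equality of the pairs.
\item Compute $\ovcalT_S$ and $\ovcalT_U$ via simples. Using the duality $\Hom(S_i,X_{i'}[l])$ and the semibrick description, $\calF_S=\sfF(X^-)$ (the torsion-free class generated by $X^-$ in degree $-1$). Likewise, $\calF_U=\Sub H^{-1}(\nu U)$ should be the torsion-free class generated by those $X_i^-$ with $i\notin J$; I will establish this last identification first.
\item Conclude: (c) says that $X_j^-=0$ for all $j\in J$, so $X^-$ consists only of the $X_i^-$ with $i\notin J$. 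Hence the two torsion-free classes coincide. Conversely, if some $X_j^-\ne0$ with $j\in J$, then $X_j^-$ lies in $\calF_S$ but not in $\calF_U$. This uses that $X_j^-$ is not in the filtration closure of the other bricks of the semibrick $X^-$, since $\Hom$ between distinct bricks of a semibrick vanishes.
\end{enumerate}

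The main obstacle is the identification $\calF_U=\sfF(\{X_i^-\}_{i\notin J})$. I would prove it by reduction. Apply $\Hom(U,-)$ to modules and use the duality $\Hom(S_i,X_{i'}[l])$: a module $M$ satisfies $\Hom(U,M[1])=0$ iff $\Hom(H^0(\nu U)\ldots)$ vanishes. Then use $\Hom_A(M,H^{-1}(\nu U))\simeq D\Hom(U,M[1])$ (Auslander–Reiten/Nakayama duality) to translate this into a condition on $M$ expressed through the bricks $X_i^-$, $i\notin J$. If this direct route stalls, I would instead invoke the #J=1 case from \cite{AIR,A-semi} and induct on $\#J$, mutating one summand at a time while staying within $\twosilt A$ under (c).
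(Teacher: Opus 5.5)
Your arguments for (a)$\Leftrightarrow$(b) and (b)$\Leftrightarrow$(c) are correct and essentially the paper's. The paper treats $i\notin J$ uniformly through the cohomology sequence of $X_i[-1]\to W_i\to Y_i\to X_i$, but your case split is fine.

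\textbf{The gap is in (c)$\Leftrightarrow$(d).} You plan to prove first, and then use in both directions, the identification $\calF_U=\sfF(\bigoplus_{i\notin J}X_i^-)$. This is false in general. Here is a counterexample.
Let $A$ be hereditary of type $A_2$ with $P(2)=L(2)$ simple projective and $P(1)=I(2)$ of length two.
Take $S=A[1]$, so that $X_1=L(1)[1]$ and $X_2=L(2)[1]$, and let $J=\{1\}$, $U=P(2)[1]$.
Then $H^{-1}(\nu U)=I(2)=P(1)$, so $\calF_U=\Sub P(1)=\add(P(1)\oplus L(2))$.
On the other hand, $\sfF(X_2^-)=\sfF(L(2))=\add L(2)$, so the two classes differ.

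Only the inclusion $\sfF(\bigoplus_{i\notin J}X_i^-)\subset\calF_U$ always holds, because $X_i^-\subset H^{-1}(\nu S_i)$ by Proposition \ref{Prop_endotop}. This breaks your converse direction. When some $X_j^-\neq 0$ with $j\in J$, your Hom-orthogonality argument only shows $X_j^-\notin\sfF(\bigoplus_{i\notin J}X_i^-)$, which does not imply $X_j^-\notin\calF_U$. Your sketched reduction via $\Hom(U,-)$ is therefore aimed at a false target.

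The fallback induction on $\#J$ also does not work as stated. Simultaneous mutation is not an iterate of irreducible mutations, because the complement changes after each step. In the example above, $\mu_1^-\mu_2^-(A)=[P(2)\to P(1)]\oplus P(2)[1]$, whereas $\mu_{\{1,2\}}^-(A)=A[1]$.

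\textbf{The fix is short.} For (c)$\Rightarrow$(d) the easy inclusion suffices. Under (c) we have $\calF_S=\sfF(X^-)=\sfF(\bigoplus_{i\notin J}X_i^-)\subset\calF_U\subset\calF_S$, which is exactly the paper's argument.

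For (d)$\Rightarrow$(c), the paper uses that $H^{-1}(\nu S_j)\in\calF_S=\calF_U=\Sub H^{-1}(\nu U)$. This gives a monomorphism $H^{-1}(\nu S_j)\to H^{-1}(\nu U)^{\oplus s}$ whose components are radical maps. The intersection formula of Proposition \ref{Prop_endotop} then forces $X_j^-=0$.

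Alternatively, your Nakayama-duality idea works if you apply it directly to $X_j^-$ instead of to the identification. If $X_j=X_j^-[1]$ with $j\in J$, then
$\Hom_A(X_j^-,H^{-1}(\nu U))\simeq \Hom_K(\Hom_{\sfD(A)}(U,X_j),K)=0$ by Definition-Proposition \ref{Def-Prop_SH}.
Hence $X_j^-\in{}^\perp H^{-1}(\nu U)=\ovcalT_U$, so $X_j^-\notin\calF_U$ whenever it is nonzero. Since $X_j^-\in\sfF(X^-)=\calF_S$, this shows $\calF_S\neq\calF_U$.
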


As an application of this theorem, 
we prove the following result on maximal semibricks.
We remark that the construction of $X^+$ from $Y$ below
is the same as \cite[Proposition 5.6, Corollary 5.7]{BDH}
for brick finite algebras.

\begin{Prop}[Proposition \ref{Prop_LF_RF_max}]
Let $M \in \fRsbrick A$.
Take the unique $Y=\bigoplus_{i=1}^n Y_i \in \twosmc A$ such that $Y^-=M$,
and set $J:=\{i \in \{1,\ldots,n\} \mid Y_i^- \ne 0\}$.
Then $X:=\mu_J^+(Y) \in \twosmc A$ holds,
and $X^+$ is a left finite, right finite and maximal semibrick
which contains $M$ as a direct summand.
\end{Prop}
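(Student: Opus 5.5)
The plan has three stages: produce $X$ by mutation, show the mutation stays 2-term, then read off finiteness and maximality from $X$ and its mutation partner $Y$.

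\textbf{Producing $X$.} Since $M$ is right finite, Proposition \ref{Prop_sbrick} (the 2-term bijections) gives a unique $Y\in\twosmc A$ with $Y^-=M$. For each $i$, Proposition \ref{Prop_stalk} tells us that $Y_i$ lies in $\mod A$ or in $(\mod A)[1]$. By definition of $J$, we have $Y_j\in(\mod A)[1]$ exactly for $j\in J$, and $Y_i\in\mod A$ with $Y_i^+\neq 0$ for $i\notin J$. The right mutation $\mu_J^+$ is inverse to $\mu_J^-$, just as for silting complexes, via the compatibility in Theorem \ref{Thm_smc_mut}.

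\textbf{$X$ is 2-term.} I would use the dual of Theorem \ref{Thm_within}: $\mu_J^+(Y)\in\twosmc A$ if and only if $Y_j^-\neq 0$ for all $j\in J$. This holds by the choice of $J$. If only the left-mutation version is available, I would dualize the argument, or apply the duality $\Hom_K(-,K)$, which interchanges $\mod A$ and $\mod A^{\mathrm{op}}$ and left and right mutations. Concretely, $X_j=Y_j[-1]=Y_j^-\in\mod A$ for $j\in J$. For $i\notin J$, $X_i$ sits in a triangle $X_i\to W_i\to Y_i[1]\to X_i[1]$ with $W_i\in\Filt\{Y_j\mid j\in J\}\subset(\mod A)[1]$, so $X_i$ has cohomology only in degrees $-1$ and $0$.

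\textbf{$M$ is a summand of $X^+$.} Since $X_j=Y_j^-\in\mod A$ for $j\in J$, we get $X_j^+=Y_j^-$, so $M=\bigoplus_{j\in J}X_j^+$ is a direct summand of $X^+$. Moreover $X\in\twosmc A$, so $X^+$ is left finite by Proposition \ref{Prop_sbrick}.

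\textbf{Right finiteness and maximality.} Here I need $X_i\in\mod A$ for every $i$, i.e.\ $X^-=0$. With that:
\begin{itemize}
\item $X^+$ consists of $n=|A|$ bricks. A semibrick with $|A|$ bricks is maximal, since any semibrick is contained in a 2-term simple-minded collection and so has at most $n$ summands.
\item $X^+=X$ is a 2-term simple-minded collection with $X^-=0$, so right finiteness should follow from the symmetric bijection $\fRsbrick A\leftrightarrow\twosmc A$ applied to $X[-1]$, or by noting that $\sfT(X^+)^{\perp}$ is functorially finite.
\end{itemize}

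\textbf{Main obstacle: showing $X^-=0$.} For $i\notin J$, the cone of $X_i\to W_i$ is $Y_i[1]$, which lies in $(\mod A)[1]$ because $Y_i\in\mod A$. Taking the long exact cohomology sequence, with $W_i$ concentrated in degree $-1$, $Y_i[1]$ concentrated in degree $-1$, and $X_i$ in degrees $-1$ and $0$:
\[
0\to H^{-1}(X_i)\to H^{-1}(W_i)\to Y_i\to H^0(X_i)\to 0 .
\]
So $H^{-1}(X_i)$ is a kernel, and a priori it may be nonzero.

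The first thing I would try is to exploit that $g'$ is a minimal right $\Filt(Y_J)$-approximation. If $X_i=X_i^-[1]$, then $X_i^-\in\calF$, and the map $Y_i[1]\to X_i[1]$ would be zero, since it goes from $\calT[1]$ to $\calF[2]$, which has no morphisms in the 2-term torsion-pair setting. The triangle would then split, forcing $Y_i[1]$ to be a summand of $W_i\in\Filt(Y_J)$, contradicting the fact that the elements of a simple-minded collection are pairwise Hom-orthogonal.

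Alternatively, condition (d) of Theorem \ref{Thm_within} identifies the torsion class of $X$ with $\mod A$ when $X^-=0$, and I would check that the torsion pair attached to $X$ is $(\mod A,0)$. Either way, the conclusion is that every $X_i$ lies in $\mod A$.
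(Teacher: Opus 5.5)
Your first three steps are fine and match the paper: $X=\mu_J^+(Y)\in\twosmc A$ by the dual of Theorem \ref{Thm_within}, $X_j=Y_j[-1]=Y_j^-$ for $j\in J$, so $M$ is a summand of $X^+$, and $X^+$ is left finite by Proposition \ref{Prop_sbrick}.

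\textbf{The gap.} Both your right-finiteness and maximality arguments rest on the claim $X^-=0$, and this claim is false. By Lemma \ref{Lem_X^+=X} it would force $X=\bigoplus_i L(i)$, so every right finite semibrick would be a sum of simple modules. Concretely, let $A$ be hereditary of type $A_2$ with projective-injective $P$ in a nonsplit sequence $0\to L(2)\to P\to L(1)\to 0$, and take $M=P$.
\begin{itemize}
\item[$\cdot$] We have $\sfF(P)=\add(P\oplus L(2))$, so $Y=L(1)\oplus P[1]$.
\item[$\cdot$] The minimal right $\Filt(P[1])$-approximation of $L(1)[1]$ is $P[1]\to L(1)[1]$, whose cocone is $L(2)[1]$.
\item[$\cdot$] Hence $X=P\oplus L(2)[1]$ and $X^-=L(2)\neq 0$, while $X^+=P$ is indeed maximal.
\end{itemize}
Your splitting argument fails exactly here. $\Hom_{\sfD(A)}(\calT[1],\calF[2])=\Ext^1_A(\calT,\calF)$ need not vanish, and in this example the map $L(1)[1]\to L(2)[2]$ is the nonzero class of the sequence above. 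Your size argument for maximality is also invalid on its own terms. Only left or right finite semibricks have at most $|A|$ summands (Proposition \ref{Prop_LF_bound}). For the Kronecker algebra, for instance, pairwise Hom-orthogonal regular bricks give semibricks of any size, so having $|A|$ summands does not imply maximality.

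\textbf{The missing idea.} You need a second mutation, in the opposite direction. Set $J':=\{i\mid X_i^+\neq 0\}\supset J$ and $Z:=\mu_{J'}^-(X)$, which lies in $\twosmc A$ by Theorem \ref{Thm_within}. The point is to show $Z^-=X^+$.
\begin{itemize}
\item[$\cdot$] Take $i\notin J'$. Then $i\notin J$, $X_i\in(\mod A)[1]$ and $Y_i\in\mod A$.
\item[$\cdot$] Since $Y=\mu_J^-(X)$, the triangle defining $Y_i$ is a short exact sequence $0\to X_i[-1]\to W_i\to Y_i\to 0$ in $\mod A$. So the minimal left $\calW_J$-approximation of $X_i[-1]$ is a monomorphism.
\item[$\cdot$] Because $\calW_J\subset\calW_{J'}\subset\mod A$, this monomorphism factors through the minimal left $\calW_{J'}$-approximation $X_i[-1]\to W'_i$, which is therefore also a monomorphism.
\item[$\cdot$] But $Z_i\in(\mod A)[1]$ would make $X_i[-1]\to W'_i$ an epimorphism with nonzero kernel $Z_i[-1]$, a contradiction. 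Hence $Z_i\in\mod A$ for all $i\notin J'$.
\end{itemize}
It follows that $X^+=Z^-$, which is right finite by Proposition \ref{Prop_sbrick}. For maximality, the last assertion of Theorem \ref{Thm_within}, applied to $Z=\mu_{J'}^-(X)$, gives $(X^+)^\perp=\sfF(X^-)\subset\sfF(Z^-)=\sfF(X^+)$. Every nonzero module in $\sfF(X^+)$ has a nonzero map to $X^+$, so no nonzero module is Hom-orthogonal to $X^+$ on both sides, which is exactly maximality.
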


Thus, a semibrick $M$ is a direct summand of some left finite semibrick
if and only if $M$ is a direct summand of some right finite semibrick;
see Corollary \ref{Cor_LF_RF_summand}.

In the rest of Section \ref{Sec_2-term},
we recall basic properties of the \emph{exchange quiver} $Q(\twosilt A)$
defined by irreducible mutations of 2-term silting complexes,
as a preparation for the next section.
In \cite{A-semi}, we established the \emph{brick labeling} of $Q(\twosilt A)$ 
from the bijection $\twosilt A \to \twosmc A$ so that
the label of $S \to \mu_j^-(S)$ is $X_j^+ \in \mod A$.
Thus for each $S \in \twosilt A$,
the corresponding $X \in \twosmc A$ is recovered 
by the labels of the arrows involving $S$.

In Section \ref{Sec_reduc}, 
we fix a 2-term presilting complex $U \in \twopsilt A$.
Then there uniquely exist $S,T \in \twosilt A$
such that $\ovcalT_S=\ovcalT_U$ and $\ovcalF_T=\ovcalF_U$.
The 2-term silting complexes $S$ and $T$ are called
the \emph{maximal completion} and the \emph{minimal completion},
respectively.
As in Lemma \ref{Lem_S_T_common}, $T$ is a left mutation of $S$ at $S/U$.
We study their corresponding 
2-term simple-minded collections $X:=\SH(S)$ and $Y:=\SH(T)$ in this section.
For this purpose, we introduce the following canonical notation.

\begin{Nota}[Notation \ref{Nota_S_T}]
Let $U=\bigoplus_{i=1}^m U_i \in \twopsilt A$,
and take its maximal and minimal completions $S$ and $T$ respectively.
We fix decompositions 
$S=\bigoplus_{i=1}^n S_i$ and $T=\bigoplus_{i=1}^n T_i$ 
into indecomposable direct summands satisfying the following properties.
\begin{enumerate2}
\item
For each $i \in \{1,\ldots,m\}$, we have $S_i=U_i=T_i$.
\item
For each $j \in \{m+1,\ldots,n\}$, there exists a triangle
$S_j \xrightarrow{f_j} U'_j \to T_j \to S_j[1]$ in $\sfK^\rmb(\proj A)$ with 
$f_j$ a minimal left $(\add U)$-approximation.
\end{enumerate2}
Set $X=\bigoplus_{i=1}^n X_i:=\SH(S)$, $Y=\bigoplus_{i=1}^n Y_i:=\SH(T)$.
Since $J=\{m+1,\ldots,n\}$, we have $\calW_J=\Filt\{X_{m+1},\ldots,X_n\}$.
Thus the following holds.
\begin{enumerate2}
\item[(c)]
For each $i \in \{1,\ldots,m\}$, we have a triangle
$X_i \xrightarrow{g_i} W_i \to Y_i \to X_i[1]$ in $\sfD(A)$ with
$g_i$ a minimal left $\calW_J$-approximation.
\end{enumerate2}
\end{Nota}

We will show the following properties.

\begin{Thm}[Proposition \ref{Prop_X^-_Y^+_U},
Theorems \ref{Thm_endotop_U} and \ref{Thm_sim_W_U}]
\label{Thm_Sec4_intro}
Let $U=\bigoplus_{i=1}^m U_i \in \twopsilt A$.
\begin{enumerate}
\item
For each $i \in \{1,\ldots,m\}$, we have 
\begin{align*}
Y_i^+=H^0(U_i)/\sum_{f \in \rad_A(H^0(U),H^0(U_i))} \Im f, \quad 
X_i^-=\bigcap_{f \in \rad_A(H^{-1}(\nu U_i),H^{-1}(\nu U))} \Ker f,
\end{align*}
and at least one of them is nonzero.
\item
For each $j \in \{m+1,\ldots,n\}$, we have $X_j^+=Y_j^- \ne 0$.
Moreover these $n-m$ bricks are the nonisomorphic simple objects
of the wide subcategory $\calW_U:=\ovcalT_U \cap \ovcalF_U \subset \mod A$. 
\end{enumerate}
\end{Thm}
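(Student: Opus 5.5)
We prove the two parts separately, working throughout with Notation~\ref{Nota_S_T}. The basic input is the duality $\Hom_{\sfD(A)}(S_i,X_{i'}[l])$ of Definition-Proposition~\ref{Def-Prop_SH} and its analogue for $T,Y$. Since $S,T$ are 2-term silting, we also use the Serre-type identity $\Hom(P,M)\simeq D\Hom(M,\nu P)$ for $P\in\sfK^\rmb(\proj A)$ and $M\in\mod A$, and the description of the stalk components: $X^+$ is the semibrick of $\sfT(X^+)=\Fac H^0(S)=\ovcalT_S$, and dually for $X^-$.

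\textbf{Part (2).} By Lemma~\ref{Lem_S_T_common}, $T=\mu_J^-(S)$ with $J=\{m+1,\ldots,n\}$, and both are 2-term. Hence Theorem~\ref{Thm_within}(c) gives $X_j\in\mod A$, i.e.\ $X_j^+\ne0$, for every $j\in J$. By Definition-Theorem~\ref{simu mut smc intro}(2) applied to the right mutation $S=\mu_J^+(T)$, we have $Y_j=X_j[1]$ for $j\in J$, so $Y_j^-=X_j^+$.

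It remains to identify $\Filt\{X_j\}_{j\in J}$ with $\calW_U=\ovcalT_U\cap\ovcalF_U$.
\begin{enumerate}
\item[(i)] For $j\in J$, the duality gives $\Hom(U,X_j)=0$, hence $\Hom(H^0(U),X_j)=0$ and $X_j\in\ovcalF_U$.
\item[(ii)] Also $\Hom(U,X_j[1])=0$, and via Serre duality this gives $\Hom(X_j,H^{-1}(\nu U))=0$, so $X_j\in\ovcalT_U$.
\end{enumerate}
Since $\calW_U$ is a wide subcategory, $\Filt\{X_j\}\subset\calW_U$. For the converse I would use the standard fact that $\calW_U$ has exactly $n-m$ simples (Jasso reduction). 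Then $X_j$ for $j\in J$ is a set of $n-m$ pairwise Hom-orthogonal bricks in $\calW_U$, and $\Filt\{X_j\}=\calW_U$ should follow. Alternatively, any $M\in\calW_U$ satisfies $\Hom(S_i,M)=0$ for $i\le m$, and $M$ lies in the heart $\Filt X$ of the t-structure. Its composition factors therefore avoid $X_i$ for $i\le m$, since $\Hom(S_i,-)$ is exact on the heart and nonzero on $X_i$. So $M\in\Filt\{X_j\}_{j\in J}$. I prefer this second argument because it avoids counting.

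\textbf{Part (1).} For $i\le m$ we have $S_i=T_i=U_i$. By the duality for $T$, $Y_i^+$ is characterized as the brick with $\Hom(T_{i'},Y_i^+)=0$ for $i'\ne i$ and $\Hom(T_i,Y_i^+)\simeq\End(Y_i)$. The plan is to show that $Y_i^+$ is the top of $H^0(U_i)$ relative to $\add H^0(U)$.
\begin{enumerate}
\item[(a)] For $j\in J$, the module $H^0(T_j)$ is a factor of $H^0(U'_j)\in\add H^0(U)$, since $H^0(S_j[1])=0$ for 2-term complexes. Hence $\calT_T=\Fac H^0(T)=\Fac H^0(U)=\calT_U$.
\item[(b)] $Y^+$ is the semibrick $\sfT(Y^+)=\calT_T$, whose bricks are the tops $H^0(T_i)/\rad$ relative to $\add H^0(T)$ (the ``brick labels'', Proposition~\ref{Prop_sbrick}).
\item[(c)] Combining (a) and (b), $Y_i^+=H^0(U_i)/\sum_{f\in\rad(H^0(U),H^0(U_i))}\Im f$. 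Indices with $j\in J$ contribute no $+$ part by part (2).
\end{enumerate}
The formula for $X_i^-$ is dual: apply $\nu$ and $H^{-1}$, and use $\ovcalF_S\ldots$, namely $\calF_S=\Sub H^{-1}(\nu S)=\calF_U$, which holds by Theorem~\ref{Thm_within}(d). The socle-type intersection of kernels then arises as before.

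\textbf{Nonvanishing.} Suppose $Y_i^+=0$ and $X_i^-=0$ for some $i\le m$. Then $Y_i^-\ne0$ and $X_i^+\ne0$. But the triangle in Notation~\ref{Nota_S_T}(c) has $W_i$ and $X_i$ in $\mod A$, so the cone $Y_i$ has $H^{-1}(Y_i)=\Ker g_i$. This is zero because a minimal left approximation by a torsion-free-type class should be injective on the relevant part; that is the step I expect to be delicate. I would try first to show $g_i$ is a monomorphism using $X_i\in\Sub$ of $\calW_J$-objects, and failing that argue via the duality $\Hom(U_i,Y_i^-[1])$ to derive a contradiction with $U_i$ being a direct summand in both. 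The main obstacle overall is justifying (b) precisely with the paper's conventions on indexing.
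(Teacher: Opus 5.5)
\textbf{Gap: the nonvanishing claim in (1).} Your argument that $Y_i^+$ and $X_i^-$ are not both zero fails because you use the wrong triangle. The left mutation triangle (Theorem \ref{Thm_smc_mut}(1), Definition-Proposition \ref{Def-Prop_smc_mut}) is $X_i[-1]\xrightarrow{g_i}W_i\to Y_i\to X_i$, not $X_i\xrightarrow{g_i}W_i\to Y_i\to X_i[1]$ as printed in Notation \ref{Nota_S_T}(c).

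With your reading, the step you call delicate is false, not merely hard. For $i\le m$, $X_i$ and the $X_j$ ($j\in J$) are pairwise non-isomorphic simple objects of $\calH_S$, so $\Hom_{\sfD(A)}(X_i,\calW_J)=0$. Hence the minimal left $\calW_J$-approximation of $X_i$ is $X_i\to 0$, so $\Ker g_i=X_i\neq 0$, and no minimality argument can make $g_i$ injective.

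With the correct triangle the claim is immediate. Rotate it to $W_i\to Y_i\to X_i\to W_i[1]$, so $Y_i$ is an extension of $X_i$ by $W_i$. Suppose $X_i^-=0$, so $X_i\in\mod A$. Also $W_i\in\calW_J\subset\mod A$, since $X_j\in\mod A$ for all $j\in J$, which you established in part (2). As $\mod A$ is extension-closed in $\sfD(A)$, we get $Y_i\in\mod A$, i.e. $Y_i^+=Y_i\neq0$. The paper argues equivalently: if $Y_i\in(\mod A)[1]$ and $X_i\in\mod A$, then $\Hom_{\sfD(A)}(Y_i,X_i)=0$, the triangle splits, and $Y_i$ becomes a summand of $W_i\in\mod A$, a contradiction.

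\textbf{The formulas in (1).} Here you follow the paper's route. Your input (b) is Proposition \ref{Prop_endotop} applied to $T$ and to $S$, not Proposition \ref{Prop_sbrick}. You then pass from $T$ to $U$ via $\calT_T=\calT_U$, and dually via $\calF_S=\calF_U$. You must write out what ``combining (a) and (b)'' hides, namely the equality $\sum_{f\in\rad_A(H^0(T),H^0(T_i))}\Im f=\sum_{f\in\rad_A(H^0(U),H^0(U_i))}\Im f$. The inclusion $\subset$ comes from precomposing a radical map $H^0(T/U)\to H^0(U_i)$ with a surjection $H^0(U)^{\oplus s}\to H^0(T/U)$, which exists by your (a).

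\textbf{Part (2): a valid, different route.} The paper identifies $X_j^+$ as the label of $S\to\mu_j^-(S)$ and transports it through Jasso's equivalence $\Phi\colon\calW_U\to\mod B$. It uses the compatibility of labels with reduction (Propositions \ref{Prop_reduc_label}, \ref{Prop_reduc_source}) to get $\Phi(X_j^+)=L(j-m)_B$. Your argument has two halves:
\begin{itemize}
\item $X_j\in\ovcalT_U\cap\ovcalF_U$, via Definition-Proposition \ref{Def-Prop_SH} and $\Hom_{\sfD(A)}(U,M[1])\simeq D\Hom_A(M,H^{-1}(\nu U))$;
\item conversely, every $M\in\calW_U$ has no composition factor $X_i$ ($i\le m$) in $\calH_S$, since $\Hom_{\sfD(A)}(S_i,M)=\Hom_A(H^0(U_i),M)=0$.
\end{itemize}
This is correct and more elementary. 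It avoids reduction and labels and gives $\calW_J=\calW_U$ directly, whereas the paper's route additionally yields the identification with the simple $B$-modules.

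You should state two points explicitly. First, $M\in\calW_U$ lies in $\calH_S$ because $\Hom_{\sfD(A)}(S,M[1])\simeq D\Hom_A(M,H^{-1}(\nu S))=0$ by $M\in\ovcalT_U=\ovcalT_S$; this is exactly where maximality of $S$ is used. Second, the $X_j$ are simple in $\calW_U$ because short exact sequences in $\calW_U$ are triangles with all terms in $\calH_S$.

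Drop the counting alternative. A family of $n-m$ Hom-orthogonal bricks in a category with $n-m$ simples need not consist of its simples.
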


We remark that the subcategory $\calW_U$ in (2) is
the \emph{$\tau$-perpendicular category}
in the \emph{$\tau$-tilting reduction} 
(Proposition \ref{Prop_reduc}) introduced by Jasso \cite{Jasso}.
He considered the factor algebra $B=\End_A(H^0(S))/\langle e \rangle$
by the idempotent $H^0(S) \to H^0(U) \to H^0(S)$,
and obtained a category equivalence $\calW_U \to \mod B$
and a canonical bijection $\twosilt_U A \to \twosilt B$,
where $\twosilt_U A$ consists of $V \in \twosilt A$
which has $U$ as a direct summand.

The exchange quiver $Q(\twosilt_U A)$ is defined as just
the full subquiver of $Q(\twosilt A)$
given by the subset $\twosilt_U A \subset \twosilt A$.
We have an isomorphism of $Q(\twosilt_U A) \to Q(\twosilt B)$,
and the brick labeling of these quivers are compatible
with the equivalence $\calW_U \simeq \mod B$ \cite{Jasso,A-semi}.
By Theorem \ref{Thm_Sec4_intro}, 
the arrows around the vertices $S$ and $T$ in $Q(\twosilt A)$
are depicted as follows,
where the gray region denotes $Q(\twosilt_U A)$.
We give detailed explanation 
in and after Proposition \ref{Prop_label_S_T}.
\begin{align*}
\begin{tikzpicture}[->,baseline=0pt,xscale=1, yscale=0.7]
\fill[black!10] (0,3.5)--(-6,1.5)--(-6,-1.5)--
(0,-3.5)--(6,-1.5)--(6,1.5)--cycle;
\node (S0) at ( 0, 3) {$S$};
\node (S1) at ( 7, 5) {$\mu_{i_1}^+(S)$};
\node (S2) at (-8, 2) {$\mu_{i_2}^-(S)$};
\node (S3) at ( 4, 5) {$\mu_{i_3}^+(S)$};
\node (S4) at (-5, 1) {$\mu_{m+1}^-(S)$};
\node (S5) at (-1, 1) {$\mu_{m+2}^-(S)$};
\node (S6) at ( 2, 1) {$\cdots$};
\node (S7) at ( 5, 1) {$\mu_n^-(S)$};
\node (CC) at ( 0, 0) {$\vdots$};
\node (T0) at ( 0,-3) {$T$};
\node (T1) at ( 7,-2) {$\mu_{i_1}^+(T)$};
\node (T2) at (-8,-4) {$\mu_{i_2}^-(T)$};
\node (T3) at ( 4,-4) {$\mu_{i_3}^-(T)$};
\node (T4) at (-5,-1) {$\mu_{m+1}^+(T)$};
\node (T5) at (-1,-1) {$\mu_{m+2}^+(T)$};
\node (T6) at ( 2,-1) {$\cdots$};
\node (T7) at ( 5,-1) {$\mu_n^+(T)$};
\draw (S0) to[edge label ={$\scriptstyle X_{m+1}$},pos=0.7] (S4);
\draw (S0) to[edge label ={$\scriptstyle X_{m+2}$},pos=0.4] (S5);
\draw (S0) to[edge label'={$\scriptstyle X_n$},    pos=0.7] (S7);
\draw (S1) to[edge label ={$\scriptstyle X_{i_1}^-$},pos=0.3] (S0);
\draw (S0) to[edge label'={$\scriptstyle X_{i_2}^+$},pos=0.7] (S2);
\draw (S3) to[edge label'={$\scriptstyle X_{i_3}^-$},pos=0.3] (S0);
\draw (T4) to[edge label ={$\scriptstyle X_{m+1}$},pos=0.3] (T0);
\draw (T5) to[edge label ={$\scriptstyle X_{m+2}$},pos=0.6] (T0);
\draw (T7) to[edge label'={$\scriptstyle X_n$},    pos=0.3] (T0);
\draw (T1) to[edge label ={$\scriptstyle Y_{i_1}^-$},pos=0.3] (T0);
\draw (T0) to[edge label ={$\scriptstyle Y_{i_2}^+$},pos=0.7] (T2);
\draw (T0) to[edge label'={$\scriptstyle Y_{i_3}^+$},pos=0.7] (T3);
\end{tikzpicture}
\end{align*}

\subsection*{Acknowledgement}
The author is grateful to Osamu Iyama for useful discussion.
This work was supported by JSPS KAKENHI Grant Numbers 
JP19K14500, JP20J00088 and JP23K12957. 

\subsection*{Conventions}
In this paper, $A$ is a finite dimensional algebra over a field $K$.

We only consider finitely generated modules.
We write $\mod A$ for the category of finitely generated right $A$-modules,
and its bounded derived category is denoted by $\sfD(A):=\sfD^\rmb(\mod A)$.
Similarly, $\proj A$ means the category of finitely generated projective
right $A$-modules,
and $\sfK^\rmb(\proj A)$ means the perfect derived category.

Let $M$ be an object in $\mod A$, $\sfK^\rmb(\proj A)$ or $\sfD(A)$.
Often we identify the isoclass of each module $M \in \calD$ with $M$ itself.
The symbol $|M|$ denotes the number of isoclasses of
indecomposable direct summands of $M$.
In particular, $|A|$ is the rank of the algebra $A$.
Unless otherwise stated, we set $n:=|A|$ throughout.
We often assume that $M$ is \emph{basic};
that is, $M=\bigoplus_{i=1}^m M_i$ with each $M_i$ indecomposable
and $M_i \not \cong M_j$ for any $i \ne j$.
In this case, we have $|M|=m$.

All subcategories appearing in this paper are assumed to be full
and closed under isomorphisms.
Assume that $\calD$ is $\mod A$, $\sfK^\rmb(\proj A)$ or $\sfD(A)$,
and that $\calC \subset \calD$ is a full subcategory.
Then $\add \calC$ denotes the additive closure of $\calD$;
that is, the subcategory consisting of direct summands of $M^{\oplus s}$ 
with some $M \in \calC$ and $s \in \Z_{\ge 1}$.
Moreover, $\Filt \calC$ is the filtration closure of $\calD$
with respect to short exact sequences if $\calD=\mod A$
and triangles if $\calD=\sfK^\rmb(\proj A),\sfD(A)$.
If $\calC=\{M\}$, we write $\add M:=\add \{M\}$ and so on.

Let $\calC \subset \calD$ be an additive full subcategory, and $M \in \calD$.
Then a \emph{left $\calC$-approximation} of $M$ is a morphism 
$f \colon M \to L$ in $\calD$ such that $L \in \calC$
and that every $f' \colon M \to L'$ with $L' \in \calC$
admits some $g \colon L \to L'$ such that $f'=gf$.
If moreover $f$ does not have $\phi \colon L \to L$ such that 
$\phi$ is not an isomorphism and that $f=\phi f$,
then $f$ is called a \emph{minimal left $\calC$-approximation} of $\calD$.
Dually. \emph{(minimal) right $\calC$-approximations} are defined.

\section{Mutations of simple-minded collections}
\label{Sec_general}

In this section, we aim to show that 
mutations of simple-minded collections in $\sfD(A)$ are compatible with
those of silting complexes in $\sfK^\rmb(\proj A)$.

\subsection{Silting complexes}

We first recall the definition of silting complexes.

\begin{Def}\cite[Subsection 5.1]{KV}
Let $U$ be a complex in $\sfK^\rmb(\proj A)$.
\begin{enumerate}
\item
We say that $U$ is \emph{presilting}
if $\Hom_{\sfK^\rmb(\proj A)}(U,U[l])=0$ for any $l>0$.
\item
We say that $U$ is \emph{silting}
if $U$ is presilting and the thick subcategory of $\sfK^\rmb(\proj A)$ 
generated by $\sfK^\rmb(\proj A)$ is $\sfK^\rmb(\proj A)$ itself.
\end{enumerate}
We write $\psilt A$ (resp.~$\silt A$) for
the set of isoclasses of basic presilting (resp.~silting) complexes
in $\sfK^\rmb(\proj A)$.
\end{Def}

For each $S \in \silt A$,
its indecomposable direct summands give a free basis 
of the Grothendieck group $K_0(\proj A)$,
and hence $|S|=|A|$ by \cite[Theorem 2.27]{AI}.
When we write $S=\bigoplus_{i=1}^n S_i \in \silt A$,
we implicitly assume that each $S_i$ is indecomposable
and $|A|=n$.
A typical example of silting complexes is $A$ 
as a complex concentrated in degree 0,
and its shifts $A[l]$ are also silting complexes.

Clearly, any direct summand of a silting complex is presilting.
However, the converse does not hold in general.
As in \cite{Kalck,Krah},
there exist an algebra $A$ and $U \in \psilt A$ with
$U \notin \silt A$ and $|U|=|A|$,
so the presilting complex $U$ does not have a silting complex $T$
such that $U \in \add T$.
We will deal with 2-term (pre)silting complexes from the next section.
By Proposition \ref{Prop_compl}, any 2-term presilting complex
can be completed to a 2-term silting complex.

Then we recall the mutation formula of silting complexes 
established in \cite{AI}.

\begin{Def-Prop}\label{Def-Prop_silt_mut}
\cite[Definition 2.30, Theorem 2.31]{AI}
Let $S=\bigoplus_{i=1}^n S_i \in \silt A$,
and $J \subset \{1,\ldots,m\}$ be a subset.
Set $S_J:=\bigoplus_{j \in J} S_j$ and $U:=S/S_J$.
\begin{enumerate}
\item
For each $j \in J$, there exists a triangle 
\begin{align*}
S_j \xrightarrow{f_j} U'_j \xrightarrow{f'_j} T_j \to S_j[1]
\end{align*} 
in $\sfK^\rmb(\proj A)$
with $f_j$ a miminal left $(\add U)$-approximation,
$f'_j$ a minimal right $(\add U)$-approximation, and
$T_j$ indecomposable.
\item
\cite[Proposition 2.33]{AI}
We set $T_i:=S_i$ if $i \notin J$, and take $T_j$ for $j \in J$ in (1).
Then the \emph{left mutation} of $S$ at $S_J$
is defined as
\begin{align*}
\mu_J^-(S):=\bigoplus_{i=1}^n T_i,
\end{align*}
including the indices of the indecomposable direct summands.
Dually, we define the \emph{right simultaneous mutation} $\mu_J^+(S)$.
Then we get $\mu_J^+(\mu_J^-(S))=\mu_J^-(\mu_J^+(S))$.
\end{enumerate}
\end{Def-Prop}

If $J=\{j\}$, then we write $\mu_j^-(S):=\mu_J^-(S)$
and $\mu_j^+(S):=\mu_J^+(S)$.
In such cases, we sometimes call $\mu_j^-(S)$ and $\mu_j^+(S)$
\emph{irreducible mutations} to emphasize that $\#J=1$.

We remark that the original statement of \cite{AI} is stated
as mutation of silting subcategories in a more general setting.
To obtain Definition-Proposition \ref{Def-Prop_silt_mut} completely,
we need to show that 
each $T_i$ is indecomposable and that $T=\mu_J^-(S)$ is always basic.
Though these properties are well-known to experts, 
and are stated in \cite{Zhang} for example,
we give a proof below for the convenience of the reader.

\begin{proof}[Proof of Definition-Proposition \ref{Def-Prop_silt_mut}]
By the cited original results of \cite{AI},
we have a triangle $S_J \to U'_J \to T_J \to S_J[1]$
with $S_J \to U'_J$ a minimal left $(\add U)$-approximation,
where $U'_J:=\bigoplus_{j \in J} U'_j$ and $T_J:=\bigoplus_{j \in J}T_j$.

(i)
We show $(\add U) \cap (\add T_J)=\{0\}$
in an analogous way to \cite[Theorem 2.30]{AIR}.

Decompose $T_J$ as $T'' \oplus U''$, where 
$(\add U) \cap  (\add T'')=\{0\}$ and $U'' \in \add U$.
It suffices to show that $T'' \simeq T_J$.
We write $\iota \colon T'' \to T_J$ for the canonical summand embedding.
Then we have a commutative diagram 
\begin{align*}
\begin{tikzpicture}[baseline=0pt,->,scale=1.2]
\node (11) at ( 0, 0.5) {$S_J\vphantom{T''_J}$};
\node (12) at ( 2, 0.5) {$V\vphantom{T''_J}$};
\node (13) at ( 4, 0.5) {$T''\vphantom{T''_J}$};
\node (14) at ( 6, 0.5) {$S_J[1]\vphantom{T''_J}$};
\node (21) at ( 0,-0.5) {$S_J\vphantom{T''_J}$};
\node (22) at ( 2,-0.5) {$U'_J\vphantom{T''_J}$};
\node (23) at ( 4,-0.5) {$T_J\vphantom{T''_J}$};
\node (24) at ( 6,-0.5) {$S_J[1]\vphantom{T''_J}$};
\draw           (11) to [edge label={$\scriptstyle g$}](12);
\draw           (12) to (13);
\draw           (13) to (14);
\draw           (21) to [edge label={$\scriptstyle f$}](22);
\draw           (22) to (23);
\draw           (23) to (24);
\draw[double,-] (11) to (21);
\draw           (12) to [edge label'={$\scriptstyle \gamma$}](22);
\draw           (13) to [edge label'={$\scriptstyle \iota$}] (23);
\draw[double,-] (14)--(24);
\end{tikzpicture}
\end{align*}
with the top and the bottom rows triangles in $\sfK^\rmb(\proj A)$.

By the definition of $\iota$, we have a split triangle
$T'' \xrightarrow{\iota} T_J \to U'' \xrightarrow{0} T''[1]$,
so the commutative diagram and the octahedral axiom give
a triangle $V \xrightarrow{\gamma} U'_J \to U'' \to V[1]$.
Since $U \in (\add S) \cap (\add T)$,
we have $\Hom_{\sfD(A)}(U'',S_J[1])=0$ and $\Hom(U'',T''[1])=0$.
Then the triangle $S_J \xrightarrow{g} V \to T'' \to S_J[1]$ gives 
$\Hom_{\sfD(A)}(U'',V[1])=0$.
Thus the last morphism of 
the triangle $V \xrightarrow{\gamma} U'_J \to U'' \to V[1]$ is zero,
and $\gamma$ is a section.

Thus $g \colon S_J \to V$ satisfies $V \in \add U$ and $f=\gamma g$.
Then since $f$ is a left $(\add U)$-approximation, so is $g$.
By the minimality of $f$, 
the morphism $\gamma \colon V \to U'_J$ is an isomorphism.
Then the commutative diagram above implies that
the summand embedding $\iota \colon T'' \to T_J$ is an isomorphism.

Now $(\add U) \cap (\add T_J)=(\add U) \cap (\add T'')=\{0\}$ follows.

(ii)
Let $j \in J$.
We show that each $T_j$ is indecomposable.
In the triangle $S_j \xrightarrow{f_j} U'_j 
\xrightarrow{f'_j} T_j \to S_j[1]$,
$f'_j$ is a right $(\add U)$-approximation 
by \cite[Proposition 2.33]{AI}.
Since $(\add S_j) \cap (\add U)=\{0\}$,
the morphism $f'_j$ is a minimal right $(\add U)$-approximation.
Moreover $(\add S_j) \cap (\add U)=\{0\}$ implies also $T_j \ne 0$.
Since $(\add T_j) \cap (\add U)=\{0\}$ is proved in (i)
and since $g$ is a minimal right $(\add U)$-approximation,
if $T_j$ is decomposable, then so is $S_j$, a contradiction. 
Thus $T_j$ is indecomposable.

(iii)
We show that $T_J$ is basic.
By (ii), $T_J$ has exactly $\#J$ indecomposable direct summands.
On the other hand, since $U \oplus T_J$ is silting,
we have $|T_J| \ge |A|-|U|=|S_J|=\#J$.
Thus $T_J$ is basic.

By (i) and (iii), $T=U \oplus T_J$ is basic.
\end{proof}

We note the following property.

\begin{Prop}
Under the setting of Definition-Proposition \ref{Def-Prop_silt_mut},
$U=S/S_J$ is the maximal common direct summand of $S$ and $T=\mu_J^-(S)$.
\end{Prop}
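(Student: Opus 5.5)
Since $U$ is a direct summand of both $S$ and $T=U\oplus T_J$ by construction, the whole claim is that no indecomposable summand of $T_J$ is isomorphic to a summand of $S_J$, i.e. $(\add S_J)\cap(\add T_J)=\{0\}$. Indeed, $S$ and $T$ are basic. Part (i) of the proof of Definition-Proposition \ref{Def-Prop_silt_mut} shows $(\add U)\cap(\add T_J)=\{0\}$, and $(\add U)\cap(\add S_J)=\{0\}$ holds because $S$ is basic. Hence once the displayed intersection vanishes, the common indecomposable summands of $S$ and $T$ are exactly those of $U$.

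Suppose, for a contradiction, that $T_k\cong S_j$ for some $j,k\in J$. I plan to use the triangle $S_k\xrightarrow{f_k}U'_k\xrightarrow{f'_k}T_k\xrightarrow{h_k}S_k[1]$ from Definition-Proposition \ref{Def-Prop_silt_mut}(1). First, $h_k\neq 0$: otherwise the triangle splits and $T_k$ becomes a summand of $U'_k\in\add U$, which contradicts $T_k\notin\add U$ (part (i) again, or $T_k\neq 0$ together with part (ii)). So $h_k$ is a nonzero morphism $T_k\to S_k[1]$. Then $T_k\cong S_j$ gives $\Hom(S_j,S_k[1])\neq 0$. This contradicts the presilting property of $S$, since $S_j\oplus S_k\in\add S$.

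The argument is short. The only point needing care is $h_k\neq 0$, which I would justify through the splitting argument above using $(\add U)\cap(\add T_J)=\{0\}$ and $T_k\neq0$, both already established. Alternatively, one could observe that $T$ is presilting, so $\Hom(T_k,S_k[1])$ must vanish if $S_k\in\add T$. That is a separate consideration, but either route concludes the proof.
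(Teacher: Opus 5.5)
Your proposal is correct and is essentially the paper's argument: the paper also takes the triangle $S_j \to U'_j \to T_j \to S_j[1]$, uses $T_j \in \add S$ and the presilting property of $S$ to get $\Hom_{\sfD(A)}(T_j,S_j[1])=0$, and concludes that $f'_j$ is a split epimorphism, so $T_j \in \add U$, a contradiction. Your version only phrases this contrapositively through $h_k \neq 0$ and spells out the reduction to $(\add S_J)\cap(\add T_J)=\{0\}$.
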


\begin{proof}
Clearly, $U$ is a common direct summand of $S$ and $T$.
On the other hand, assume that $V$ is an indecomposable object
such that $V \in (\add S) \cap (\add T)$ and $V \notin \add U$.
Then we can take $j \in J$ with $V \simeq T_j$.
The triangle $S_j \to U'_j \to T_j \to S_j[1]$ gives 
a short exact sequence 
$\Hom_{\sfD(A)}(T_j,U'_j) \to \Hom_{\sfD(A)}(T_j,T_j)
\to \Hom_{\sfD(A)}(T_j,S_j[1])$.
Since $T_j \in \add S$, we have $\Hom_{\sfD(A)}(T_j,S_j[1])=0$.
Thus $\Hom_{\sfD(A)}(T_j,f'_j) \colon
\Hom_{\sfD(A)}(T_j,U'_j) \to \Hom_{\sfD(A)}(T_j,T_j)$ is surjective,
which implies that $T_j$ is a direct summand of $U'_j \in \add U$.
This contradicts the assumption $V \notin \add U$.
Therefore, we get $(\add S) \cap (\add T) \subset \add U$ as desired.
\end{proof}

\subsection{Simple-minded collections}

In this subsection, we deal with mutations of simple-minded collections
without using silting complexes.
We recall the definition of simple-minded collections 
in $\sfD(A)=\sfD^\rmb(\mod A)$.

\begin{Def}\cite[after Lemma 1]{Al-Nofayee}
(cf. \cite[Definition 3.2]{KY} \cite[Theorem 6.1]{Rickard-symm})
Let $X=\bigoplus_{i=1}^m X_i$ be a basic object in $\sfD(A)$
with each $X_i$ indecomposable.
Then we say that $X$ is a \emph{simple-minded collection} in $\sfD(A)$
if the following conditions are satisfied.
\begin{enumerate2}
\item
For each $i \in \{1,\ldots,m\}$, 
the endomorphism ring $\End_{\sfD(A)}(X_i)$ is a division $K$-algebra.
\item
For any $i \ne j \in \{1,\ldots,m\}$,
we have $\Hom_{\sfD(A)}(X_i,X_j)=0$.
\item
For any $i,j \in \{1,\ldots,m\}$ and $l>0$, 
we have $\Hom_{\sfD(A)}(X_i,X_j[-l])=0$.
\item
The thick subcategory of $\sfD(A)$ generated by $X$ is 
$\sfD(A)$ itself.
\end{enumerate2}
The set of isoclasses of simple-minded collections in $\sfD(A)$
is denoted by $\smc A$.
\end{Def}

If $X \in \smc A$, its indecomposable direct summands 
give a free basis of $K_0(\mod A)$,
and hence $|X|=|A|$ by \cite[Corollary 5.1]{KY}.
As in the case of $\silt A$,
by the notation $X=\bigoplus_{i=1}^n X_i$,
we also mean that each $X_i$ is indecomposable
and that $|A|=n$.
A typical example of simple-minded collections is
the direct sum $\bigoplus_{i=1}^n L(i)$ of
all non-isomorphic simple modules $L(1),\ldots,L(n)$.

For each $X=\bigoplus_{i=1}^n X_i \in \smc A$, we define
a pair $(\calU,\calV)$ of full subcategories of $\sfD(A)$by 
\begin{align*}
\calU:=\Filt \{X_i[l] \mid i \in \{1,\ldots,n\}, \ l \ge 0 \}, \quad
\calV:=\Filt \{X_i[-l] \mid i \in \{1,\ldots,n\}, \ l \ge 0 \}.
\end{align*}
By \cite[Proposition 5.4]{KY} and \cite[Proposition 4]{Al-Nofayee},
$(\calU,\calV)$ is a bounded t-structure in $\sfD(A)$
whose heart $\calH:=\calU \cap \calV=\Filt \{X_1,\ldots,X_n\}$
is an abelian length category such that
$X_1,\ldots,X_n$ are its simple objects.
For any subset $J \subset \{1,\ldots,n\}$, we often consider
\begin{align*}
\calW_J:=\Filt\{X_j \mid j \in J\} \subset \sfD(A),
\end{align*}
which is a Serre subcategory of $\calH$.
In particular, also $\calW_J$ is an abelian length category.
For the precise definition and other properties of t-structures,
the reader can refer also to \cite{BBD,KV,BY}.

We will show the following properties,
which gives a sufficient condition for
the existence of mutations of simple-minded collections.

\begin{Thm}\label{Thm_smc_mut_approx}
Let $X=\bigoplus_{i=1}^n X_i \in \smc A$ and $J \subset \{1,\ldots,n\}$.
For each $i \in \{1,\ldots,n\} \setminus J$, assume that
\begin{align*}
X_i[-1]\xrightarrow{g_i} W_i \xrightarrow{g'_i} Y_i \to X_i
\end{align*} 
is a triangle in $\sfD(A)$ with $g_i$ a minimal left $\calW_J$-approximation. 
Moreover set $Y_j:=X_j[1]$ for any $j \in J$.
Then the following statements hold.
\begin{enumerate}
\item
We have $\bigoplus_{i=1}^n Y_i \in \smc A$ with
$\End_{\sfD(A)}(Y_i) \simeq \End_{\sfD(A)}(X_i)$ 
for each $i \in \{1,\ldots,n\}$.
\item
For each $i \in \{1,\ldots,n\} \setminus J$,
the morphism $g'_i \colon W_i \to Y_i$ is 
a minimal right $\calW_J$-approximation.
\end{enumerate}
\end{Thm}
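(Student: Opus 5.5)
I would verify the axioms (a)--(d) of a simple-minded collection for $Y:=\bigoplus_{i=1}^n Y_i$ directly, working inside the bounded t-structure $(\calU,\calV)$ with heart $\calH=\Filt\{X_1,\ldots,X_n\}$, and using that $\calW_J$ is a Serre subcategory of the length category $\calH$.

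\textbf{Step 1: the objects $Y_i$ for $i \notin J$.} Since $X_i$ is simple in $\calH$ and not in $\calW_J$, we have $\Hom(X_i,W)=0$ for all $W\in\calW_J$. Hence $\Hom(X_i[-1],W)=\Ext^1_{\calH}(X_i,W)$. From the triangle $X_i[-1]\to W_i\to Y_i\to X_i$ we see that $Y_i$ is the extension of $X_i$ by $W_i$ in $\calH$, so $Y_i\in\calH$. Minimality of $g_i$ gives a concrete description of this extension: $Y_i$ is the largest extension of $X_i$ by an object of $\calW_J$ such that $\Hom(Y_i,W)=0$ for all $W\in\calW_J$, and in particular $\soc$-type and top properties hold. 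Concretely, I would prove the following properties of $Y_i$.
\begin{enumerate2}
\item $\Hom(Y_i,\calW_J)=0$. This uses the approximation property: apply $\Hom(-,W)$ to the triangle, and use that $\Hom(g_i,W)$ is surjective together with $\Hom(X_i,W)=0$.
\item $\Ext^1_{\calH}(Y_i,\calW_J)=0$, i.e.\ $\Hom(Y_i,W[1])=0$. This follows from the same long exact sequence together with $\Hom(W_i,W[1])\to\Hom(X_i[-1],W[1])=\Hom(X_i,W[2])$. Here I need that $g_i^*$ on $\Ext^1$ level is also surjective. I expect this to be the main obstacle: it requires showing that $\Hom(X_i[-1],W[1])\simeq\Ext^2$-type terms are reached. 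If it fails in general, I will not need it, since only $\Hom(Y_i,X_j[1][-l])$ with $l>0$ appears in condition (c).
\end{enumerate2}

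\textbf{Step 2: Hom-vanishing (b), (c).} For $j\in J$, the object $Y_j=X_j[1]$ has $\End(Y_j)\simeq\End(X_j)$. For pairs of indices I split into cases:
\begin{itemize}
\item both indices in $J$: this is clear from $X$;
\item $i\notin J$, $j\in J$: we need $\Hom(Y_i,X_j[1-l])=0$ for $l\ge0$. The case $l=1$ is (a) of Step 1, and $l\ge2$ follows since $Y_i\in\calH$. Also $\Hom(X_j[1],Y_i[-l])=0$ since $Y_i\in\calH$;
\item $i,i'\notin J$: we get $\Hom(Y_i,Y_{i'}[-l])=0$ for $l>0$ because both lie in $\calH$.
\end{itemize}
For $l=0$, I would show $\Hom(Y_i,Y_{i'})\simeq\Hom(Y_i,X_{i'})$ using (a), and then $\Hom(Y_i,X_{i'})\simeq\Hom(X_i,X_{i'})$. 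The latter needs $\Hom(W_i,X_{i'})=0=\Hom(W_i[1],X_{i'})$ for $i'\notin J$, which holds since $X_{i'}$ is simple and not in $\calW_J$, and since $\Hom(W_i[1],X_{i'})=0$ in the heart. This yields $\Hom(Y_i,Y_{i'})=\delta_{ii'}\End(X_i)$, a division ring, proving (a) and (b) of the definition. The same isomorphism also shows that $Y_i$ is indecomposable and that the $Y_i$ are pairwise nonisomorphic, so $Y$ is basic.

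\textbf{Step 3: generation (d).} The thick closure of $Y$ contains each $X_j=Y_j[-1]$ for $j\in J$, hence all of $\calW_J$. Therefore it contains $W_i$, and by the triangle it contains $X_i$. So it contains $X$ and equals $\sfD(A)$.

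\textbf{Part (2).} Given $h\colon W\to Y_i$ with $W\in\calW_J$, the composite $W\to Y_i\to X_i$ is zero because $\Hom(\calW_J,X_i)=0$. Hence $h$ factors through $g'_i$, so $g'_i$ is a right approximation. For minimality, suppose $\phi\colon W_i\to W_i$ satisfies $g'_i\phi=g'_i$. Then $(1-\phi)$ factors through $X_i[-1]$, and so $(1-\phi)g_i$ gives $\phi g_i=g_i$ up to a morphism $X_i[-1]\to X_i[-1]\to W_i$. Rather than chase this, I would use the standard fact that a non-minimal right approximation splits off a summand $W'\to0$, i.e.\ some summand $W'\ne0$ of $W_i$ with $g'_i|_{W'}=0$. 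Then $W'$ would be a direct summand of $X_i[-1]$ through the triangle, which is impossible since $X_i[-1]\notin\calW_J$ is indecomposable. More carefully, the summand would split off $g_i$, contradicting minimality of $g_i$.
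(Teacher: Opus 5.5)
You have the right overall structure and the same steps as the paper, but Step~1 does not prove the two vanishings everything else rests on. The second one is a genuine gap that you wrongly declare harmless.

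\textbf{Your item (a).} Since $\Hom(X_i,W)=0$, the triangle gives
$\Hom(Y_i,W)\simeq\Ker\bigl(\Hom(g_i,W)\colon\Hom(W_i,W)\to\Hom(X_i[-1],W)\bigr)$.
So you need \emph{injectivity} of $\Hom(g_i,W)$, not the surjectivity you cite. Surjectivity (the approximation property) cannot suffice: replacing $g_i$ by $(g_i,0)\colon X_i[-1]\to W_i\oplus W'$ still gives a left approximation, but adds $W'$ as a summand of the cone. Injectivity comes from minimality, as in Lemma~\ref{Lem_ortho_0_1}. If $hg_i=0$, then $g_i$ factors through $\Ker h\hookrightarrow W_i$ in the abelian category $\calW_J$, and minimality forces $\Ker h=W_i$.

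\textbf{Your item (b).} You leave $\Hom(Y_i,\calW_J[1])=0$ open and say you will not need it, but you do.
\begin{itemize}
\item For $i\notin J$ and $j\in J$, Hom-orthogonality in the definition of a simple-minded collection requires $\Hom(Y_i,Y_j)=\Hom(Y_i,X_j[1])=0$. This is exactly the case $l=0$ of your own list ``$\Hom(Y_i,X_j[1-l])=0$ for $l\ge 0$'', which you skip.
\item It is also needed for surjectivity of $\Hom(Y_i,Y_{i'})\to\Hom(Y_i,X_{i'})$, since the obstruction lies in $\Hom(Y_i,W_{i'}[1])$. Hence it is needed for the claimed $\End(Y_i)\simeq\End(X_i)$.
\end{itemize}
Your diagnosis of what is needed is also off. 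In the exact sequence
$\Hom(W_i[1],W[1])\to\Hom(X_i,W[1])\to\Hom(Y_i,W[1])\to\Hom(W_i,W[1])\to\Hom(X_i[-1],W[1])$
you need the first map surjective (the approximation property) and the last map \emph{injective}. That injectivity is precisely what has to be proved.

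The missing idea is the triangulated Wakamatsu lemma (Lemma~\ref{Lem_Wakamatsu}). Write $g''_i\colon Y_i\to X_i$ and let $h\colon Y_i\to W'[1]$ with $W'\in\calW_J$.
\begin{itemize}
\item Complete $hg'_i$ to a triangle $W_i\to W'[1]\xrightarrow{s}W''[1]\xrightarrow{r}W_i[1]$. Then $W''\in\calW_J$ by extension-closedness.
\item A morphism of triangles gives $h'\colon X_i\to W''[1]$ with $sh=h'g''_i$ and $rh'=g_i[1]$.
\item The approximation property gives $h'=tg_i[1]$ for some $t$. Then $g_i[1]=rtg_i[1]$, so minimality makes $r$ a retraction and hence $s$ a section.
\item Finally $sh=tg_i[1]g''_i=0$, so $h=0$.
\end{itemize}

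Once (a) and (b) are established this way, your Steps~2--3 and your Part~(2) go through essentially as in the paper. For $i=i'$, check that the identification is the ring map $h\mapsto h'$ defined by $g''_ih=h'g''_i$.
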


We remark that (the dual of) this theorem already appeared 
in \cite[Theorem 2.7 (iv)$\Rightarrow$(v)]{BCPW} without a proof,
and proved for $\#J=1$ in \cite[Proposition 7.6 (b)]{KY}
and for 2-term simple-minded collections in \cite[Theorem 4.6]{BDH}.
However, we think that it is very important to give a thorough proof
in the general setting, 
which is one of the aims of this paper.

Before showing Theorem \ref{Thm_smc_mut_approx},
we state its direct consequence.

\begin{Def-Prop}\label{Def-Prop_smc_mut}
Let $X=\bigoplus_{i=1}^n X_i \in \smc A$ and $J \subset \{1,\ldots,n\}$.
Set $X_J:=\bigoplus_{j \in J}X_j$.
\begin{enumerate}
\item
Assume that each $i \in \{1,\ldots,n\} \setminus J$ admits a triangle 
$X_i[-1] \to W_i \to Y_i \to X_i$
with the first map a minimal left $\calW_J$-approximation.
Setting $Y_j:=X_j[1]$ for each $j \in J$,
the \emph{left mutation} $\mu_J^-(X)$ of $X$ at $X_J$ is defined by
\begin{align*}
\mu_J^-(X):=\bigoplus_{i=1}^n Y_i \in \smc A,
\end{align*}
including the indices of indecomposable direct summands.
\item
Dually, assume that each $i \in \{1,\ldots,n\} \setminus J$ admits a triangle 
$X_i \to Z_i \to W_i \to X_i[1]$
with the last map a minimal right $\calW_J$-approximation.
Setting $Z_j:=X_j[-1]$ for each $j \in J$,
the \emph{right mutation} $\mu_J^+(X)$ of $X$ at $X_J$ is defined by
\begin{align*}
\mu_J^+(X):=\bigoplus_{i=1}^n Z_i \in \smc A,
\end{align*}
including the indices of indecomposable direct summands.
\item
If $\mu_J^-(X)$ is well-defined by (1), 
then so is $\mu_J^+(\mu_J^-(X))$ by (2), 
and we have $\mu_J^+(\mu_J^-(X))=X$.
Dually, if $\mu_J^+(X)$ is well-defined by (2),
then so is $\mu_J^-(\mu_J^+(X))$ by (1),
and we have $\mu_J^-(\mu_J^+(X))=X$. 
\end{enumerate}
\end{Def-Prop}

Now we start to prove Theorem \ref{Thm_smc_mut_approx}.
We begin with the following triangulated version of Wakamatsu's Lemma.

\begin{Lem}\label{Lem_Wakamatsu}\cite[Lemma 2.1]{Jorgensen}
Assume that $\calW \subset \sfD(A)$ is a subcategory closed under extensions.
Let $X[-1]\xrightarrow{g} W \to Y \to X$ be a triangle in $\sfD(A)$
with $g$ a minimal $\calW$-approximation.
Then $\Hom_{\sfD(A)}(Y,\calW[1])=0$ holds.
\end{Lem}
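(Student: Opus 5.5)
This is the triangulated Wakamatsu Lemma, and I will prove it by the standard argument: use that $\calW$ is closed under extensions to compare $g$ with a bigger approximation, then use minimality. The statement says "minimal $\calW$-approximation"; in the intended application $g$ is a minimal left $\calW$-approximation of $X[-1]$, so that is how I read it. The claim is that every morphism $h \colon Y \to W'[1]$ with $W' \in \calW$ is zero.

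First step: build the extension. Given $h \colon Y \to W'[1]$, complete $h[-1] \colon Y[-1] \to W'$ to a triangle
\begin{align*}
W' \to E \to Y \xrightarrow{h} W'[1].
\end{align*}
Since $W,W' \in \calW$ lie in an extension-closed subcategory, I want an object of $\calW$ sitting over $W$. To get it, compose $W \to Y$ with $h$ and form the triangle
\begin{align*}
W' \to E' \xrightarrow{p} W \xrightarrow{h\circ(W\to Y)} W'[1].
\end{align*}
Then $E' \in \calW$. By construction $E'$ is the homotopy pullback of $E \to Y$ along $W \to Y$.

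Second step: lift $g$. The composite $X[-1] \xrightarrow{g} W \to Y$ is zero, so $h \circ (W \to Y) \circ g = 0$. Hence $g$ factors through $p$, say $g = p\,g_1$ with $g_1 \colon X[-1] \to E'$. Because $E' \in \calW$ and $g$ is a left $\calW$-approximation, there is $q \colon W \to E'$ with $g_1 = q\,g$. Therefore $p\,q\,g = g$. By minimality of $g$, $pq$ is an automorphism of $W$, so $p$ is a split epimorphism.

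Third step: conclude. Since $p$ is split epi, the next map in its triangle vanishes, i.e.\ $h \circ (W \to Y) = 0$. So $h$ factors through the cone of $W \to Y$. In the rotated triangle $W \to Y \to X \xrightarrow{-g[1]} W[1]$, this gives $h = h' \circ (Y \to X)$ for some $h' \colon X \to W'[1]$, and it remains to show $h = 0$.

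Here I apply the approximation property once more, now to $h'[-1] \colon X[-1] \to W'$. Since $g$ is a left $\calW$-approximation, $h'[-1] = r\,g$ for some $r \colon W \to W'$, so $h' = r[1]\circ g[1]$ up to sign. Thus $h$ factors through $Y \to X \to W[1]$, which is two consecutive maps in a triangle and hence zero. So $h = 0$.

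The main obstacle is the first two steps: setting up the pullback-type triangle via the octahedral axiom so that $E' \in \calW$, and then correctly using minimality to get that $p$ splits. Once $p$ splits, the last step is formal.
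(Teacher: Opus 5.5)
Your proof is correct and follows essentially the same route as the paper. Both arguments use the same extension triangle $W' \to E' \xrightarrow{p} W \xrightarrow{h g'} W'[1]$ (the paper's $W''$ is your $E'$), the approximation property together with minimality to make $p$ split, and the vanishing of $g[1]$ composed with $Y \to X$ at the end. The only difference is cosmetic: the paper finishes by observing that $W'[1] \to W''[1]$ is a section, with $s h = h' g'' = t\, g[1]\, g'' = 0$, whereas you factor $h$ through $Y \to X$ and apply the approximation property a second time (and no octahedral axiom is actually needed in either version).
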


\begin{proof}
For the convenience of the reader,
we rewrite the proof of \cite[Lemma 2.1]{Jorgensen} for our case.

Let $W' \in \calW$ and $h \colon Y \to W'[1]$.
We have a commutative diagram 
\begin{align*}
\begin{tikzpicture}[baseline=0pt,->,scale=1.2]
\node (11) at ( 0, 0.5) {$W\vphantom{T'[}$};
\node (12) at ( 2, 0.5) {$Y\vphantom{T'[}$};
\node (13) at ( 4, 0.5) {$X\vphantom{T'[}$};
\node (14) at ( 6, 0.5) {$W[1]\vphantom{T'[}$};
\node (21) at ( 0,-0.5) {$W\vphantom{T'[}$};
\node (22) at ( 2,-0.5) {$W'[1]\vphantom{T'[}$};
\node (23) at ( 4,-0.5) {$W''[1]\vphantom{T'[}$};
\node (24) at ( 6,-0.5) {$W[1]\vphantom{T'[}$};
\draw           (11) to [edge label={$\scriptstyle g'$}](12);
\draw           (12) to [edge label={$\scriptstyle g''$}](13);
\draw           (13) to [edge label={$\scriptstyle g[1]$}](14);
\draw           (21) to [edge label={$\scriptstyle hg'$}](22);
\draw           (22) to [edge label={$\scriptstyle s$}](23);
\draw           (23) to [edge label={$\scriptstyle r$}](24);
\draw[double,-] (11) to (21);
\draw           (12) to [edge label'={$\scriptstyle h$}](22);
\draw           (13) to [edge label'={$\scriptstyle h'$}] (23);
\draw[double,-] (14)--(24);
\end{tikzpicture}
\end{align*}
with the top and the bottom rows triangles in $\sfD(A)$.
Then $W',W \in \calW$ implies $W'' \in \calW$.

Since $g[1] \colon X \xrightarrow{h'} W''[1]
\xrightarrow{r} W[1]$ is a minimal left $\calW[1]$-approximation
with $W''[1] \in \calW[1]$, 
the morphism $h'$ is a left $\calW[1]$-approximation
and $r$ is a retraction.
Thus $s$ is a section.

On the other hand, since $g[1]$ is a left $\calW[1]$-approximation,
we take $t \colon W[1] \to W''[1]$ such that $h'=tg[1]$.
Then $sh=h'g''=tg[1]g''=0$.

These imply $h=0$ as desired.
\end{proof}

Then we have the following result.
Note that (a) is the first assumption and 
that (b) implies the second assumption of \cite[Proposition 7.6 (b)]{KY}
in the case $\#J=1$.

\begin{Lem}\label{Lem_ortho_0_1}
Let $X=\bigoplus_{i=1}^n X_i \in \smc A$,
$J \subset \{1,\ldots,n\}$, and
$\calW_J:=\Filt \{ X_j \mid j \in J \}\subset\sfD(A)$.
Assume $i \in \{1,\ldots,n\} \setminus J$, and that
\begin{align}\label{eq_tri_smc}
X_i[-1]\xrightarrow{g_i} W_i \xrightarrow{g'_i} Y_i \to X_i
\end{align} 
is a triangle in $\sfD(A)$ with $W_i \in \calW_J$.
The following conditions are equivalent.
\begin{enumerate2}
\item
The morphism $g_i$ is a minimal left $\calW_J$-approximation.
\item
The functorial morphism 
\begin{align*}
\Hom_{\sfD(A)}(g_i,?) \colon 
\Hom_{\sfD(A)}(W_i,?)\to\Hom_{\sfD(A)}(X_i[-1],?).
\end{align*}
is isomorphic on $\calW_J$, and injective on $\calW_J[1]$.
\item
The conditions
$\Hom_{\sfD(A)}(Y_i,\calW_J)=0$ and $\Hom_{\sfD(A)}(Y_i,\calW_J[1])=0$ hold.
\end{enumerate2}
In this case, $g'_i$ is a minimal right $\calW_J$-approximation.
\end{Lem}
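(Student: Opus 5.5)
The plan is to prove the cycle (a)$\Rightarrow$(c)$\Rightarrow$(b)$\Rightarrow$(a), and then deduce the final claim about $g'_i$ from (c). Throughout we apply $\Hom_{\sfD(A)}(?,W)$ with $W \in \calW_J$ to the triangle \eqref{eq_tri_smc}, rotated as $Y_i[-1]\to X_i[-1]\xrightarrow{g_i} W_i \xrightarrow{g'_i} Y_i$. This gives the long exact sequence
\begin{align*}
\Hom(Y_i,W[l]) \to \Hom(W_i,W[l]) \xrightarrow{g_i^*} \Hom(X_i[-1],W[l]) \to \Hom(Y_i,W[l+1]).
\end{align*}
We also record two vanishing facts. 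First, $\Hom(X_i[-1],W)=\Hom(X_i,W[1])$ may be nonzero. Second, $\Hom(X_i[-1],W[-1])=\Hom(X_i,W)=0$, since $i\notin J$ and $X_i$ is simple in the heart with $W$ filtered by $X_j$, $j\in J$. Likewise $\Hom(X_i,W[-l])=0$ for $l>0$ by axiom (c) of simple-minded collections.

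For (a)$\Rightarrow$(c): $\Hom(Y_i,\calW_J[1])=0$ is exactly Lemma~\ref{Lem_Wakamatsu}, since $\calW_J$ is extension closed. For $\Hom(Y_i,\calW_J)=0$, take $l=-1$ in the sequence. We get $\Hom(X_i[-1],W[-1])=0 \to \Hom(Y_i,W) \to \Hom(W_i,W) \xrightarrow{g_i^*} \Hom(X_i[-1],W)$, so it suffices that $g_i^*$ is injective on $\calW_J$. If $h\colon W_i\to W$ satisfies $hg_i=0$, we want $h=0$ using minimality. This is the step I expect to be the main obstacle, since minimality only controls endomorphisms of $W_i$.

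To handle it, I would use that $\calW_J$ is an abelian length (Serre) subcategory of the heart. Let $K=\Ker h$ in $\calW_J$, so $h$ factors as $W_i\twoheadrightarrow \Im h\hookrightarrow W$. Then $g_i$ factors through $K\hookrightarrow W_i$, because the cokernel of $K \to W_i$ is $\Im h$ and $hg_i=0$ means the composite to $\Im h$ vanishes, with $\Im h \hookrightarrow W$ a monomorphism in the heart. Now $g_i=\iota u$ with $u\colon X_i[-1]\to K$, and $u$ is again a left approximation. So there is $v\colon W_i\to K$ with $u=vg_i$, giving $g_i=\iota v g_i$. By minimality $\iota v$ is an automorphism, hence $\iota$ is epi, hence $\Im h=0$ and $h=0$. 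One point needs care here: exactness of monomorphisms in the heart translates into triangle factorizations. I would argue this via short exact sequences in $\calH$ corresponding to triangles.

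For (c)$\Rightarrow$(b): take $l=0$ and $l=1$ in the sequence with the vanishing of $\Hom(Y_i,\calW_J)$ and $\Hom(Y_i,\calW_J[1])$. At $l=0$ the map $g_i^*$ is injective; its cokernel embeds in $\Hom(Y_i,W[1])=0$, so it is bijective on $\calW_J$. At $l=1$ the kernel of $g_i^*$ is a quotient of $\Hom(Y_i,W[1])=0$, so it is injective on $\calW_J[1]$. For (b)$\Rightarrow$(a): surjectivity on $\calW_J$ says $g_i$ is a left approximation. If $\phi g_i=g_i$ for $\phi\in\End(W_i)$, then injectivity gives $\phi=1$, so $g_i$ is minimal.

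Finally, for $g'_i$, apply $\Hom(W,?)$ with $W\in\calW_J$. From $\Hom(W,W_i)\to\Hom(W,Y_i)\to\Hom(W,X_i)=0$ (since $i\notin J$), $g'_i$ is a right $\calW_J$-approximation. For minimality, suppose $g'_i\psi=g'_i$ with $\psi\in\End(W_i)$. Then $(1-\psi)$ factors through $X_i[-1]\to W_i$, i.e. $1-\psi=g_i a$ with $a\colon W_i\to X_i[-1]$. But $\Hom(W_i,X_i[-1])=0$ by axiom (c), so $\psi=1$.
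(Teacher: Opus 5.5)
Your proof is correct and uses the same two ingredients as the paper's: Lemma \ref{Lem_Wakamatsu} for $\Hom_{\sfD(A)}(Y_i,\calW_J[1])=0$, and the kernel argument in the abelian category $\calW_J$ for injectivity of $\Hom_{\sfD(A)}(g_i,?)$ on $\calW_J$. You only run the cycle as (a)$\Rightarrow$(c)$\Rightarrow$(b)$\Rightarrow$(a) and prove minimality directly ($\phi g_i=g_i$ together with injectivity forces $\phi=1$, and $\Hom_{\sfD(A)}(W_i,X_i[-1])=0$ handles $g'_i$) instead of through the paper's common-direct-summand argument. In the factorization step you flagged, note that being a monomorphism in the heart is not enough on its own: the kernel of $\Hom_{\sfD(A)}(X_i[-1],\Im h)\to\Hom_{\sfD(A)}(X_i[-1],W)$ is the image of $\Hom_{\sfD(A)}(X_i,C)$, where $C$ is the cokernel of $\Im h\hookrightarrow W$, and this group vanishes because $C\in\calW_J$ ($\calW_J$ is a Serre subcategory of $\calH$) and $i\notin J$.
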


\begin{proof}
(a)$\Rightarrow$(b)
On $\calW_J$, the functorial morphism $\Hom_{\sfD(A)}(g_i,?)$ 
is surjective by (a).

We next show the injectivity of $\Hom_{\sfD(A)}(g_i,?)$ on $\calW_J$.
Let $W \in \calW_J$ and $h \colon W_i \to W$ satisfy $hg_i=0$.
In the abelian category $\calW_J$, we consider the kernel $\Ker h$ 
and the canonical embedding $\iota \colon \Ker h \to W_i$.
Then $hg_i=0 \colon X_i[-1] \to W_i$ implies that 
there exists some $t \colon X_i[-1] \to \Ker h$ such that $g_i=\iota t$.
By $\Ker h \in \calW_J$, 
the minimality of (a) implies that $\iota$ is a retraction.
Then $\iota$ is an isomorphism in $\calW_J$,
because $\iota$ is a monomorphism. 
Therefore $\Ker h=W_i$ and $h=0$ as desired.

Thus $\Hom_{\sfD(A)}(g_i,?)$ is isomorphic on $\calW_J$.

It remains to prove that $
\Hom_{\sfD(A)}(g_i,?)$ is injective on $\calW_J[1]$.
Let $W \in \calW_J$.
The triangle \eqref{eq_tri_smc} gives an exact sequence
\begin{align*}
&\Hom_{\sfD(A)}(Y_i,W[1]) \to 
\Hom_{\sfD(A)}(W_i,W[1]) \xrightarrow{\Hom_{\sfD(A)}(g_i,W[1])} 
\Hom_{\sfD(A)}(X_i[-1],W[1]).
\end{align*}
By (a) and Lemma \ref{Lem_Wakamatsu}, we have $\Hom_{\sfD(A)}(Y_i,W[1])=0$.
Thus $\Hom_{\sfD(A)}(g_i,?)$ is injective on $\calW_J[1]$.

(b)$\Rightarrow$(c)
Let $W \in \calW_J$.
The triangle \eqref{eq_tri_smc} gives an exact sequence
\begin{align*}
\Hom_{\sfD(A)}(X_i,W) \to
\Hom_{\sfD(A)}(Y_i,W) \to \Hom_{\sfD(A)}(W_i,W) \to \Hom_{\sfD(A)}(X_i[-1],W).
\end{align*}
Since $W \in \calW_J$, (b) implies that the last map is injective.
Moreover $W \in \calW_J=\Filt\{X_j \mid j \in J\}$ and $i \notin J$
give $\Hom_{\sfD(A)}(X_i,W)=0$. 
Thus we have $\Hom_{\sfD(A)}(Y_i,W)=0$.

The triangle \eqref{eq_tri_smc}
gives also an exact sequence
\begin{align*}
&\Hom_{\sfD(A)}(W_i[1],W[1]) \to \Hom_{\sfD(A)}(X_i,W[1]) \to
\Hom_{\sfD(A)}(Y_i,W[1]) \to \\
&\Hom_{\sfD(A)}(W_i,W[1]) \to \Hom_{\sfD(A)}(X_i[-1],W[1]).
\end{align*}
(b) implies that the first map is surjective, 
and that the last map is injective.
Therefore we have $\Hom_{\sfD(A)}(Y_i,W[1])=0$.

(c)$\Rightarrow$(a)
Since $\Hom_{\sfD(A)}(Y_i,\calW_J[1])=0$ by (c), 
we have $g_i$ is a left $\calW_J$-approximation by \eqref{eq_tri_smc}. 
If $g_i$ is not minimal, then $W_i$ and $Y_i$
in the triangle \eqref{eq_tri_smc} have a nonzero common direct summand,
but this contradicts $\Hom_{\sfD(A)}(Y_i,\calW_J)=0$ following from (c).

Now the equivalence of (a)--(c) is proved.
The last statement follows from 
$\Hom_{\sfD(A)}(\calW_J,X_i)=0$, $\Hom_{\sfD(A)}(\calW_J,X_i[-1])=0$
and the dual argument of (c)$\Rightarrow$(a).
\end{proof}

Now we can prove Theorem \ref{Thm_smc_mut_approx}.

\begin{proof}[Proof of Theorem \ref{Thm_smc_mut_approx}]
(1)
Recall $\calH=\Filt \{X_1,\ldots,X_n\}$.
We have $\Hom_{\sfD(A)}(\calH,\calH[<0])=0$.
For each $i \notin J$, we write the given triangle
\begin{align}\label{eq_tri_smc_2}
X_i[-1]\xrightarrow{g_i} W_i \xrightarrow{g'_i} Y_i \xrightarrow{g''_i} X_i.
\end{align}

(i)
We show $\End_{\sfD(A)}(Y_i) \simeq \End_{\sfD(A)}(X_i)$ 
for each $i \in \{1,\ldots,n\}$.
This follows from (i-a) and (i-b) below.

(i-a)
Let $i \notin J$.
For any $h \in \End_{\sfD(A)}(Y_i)$, 
there exists some $h' \in \End_{\sfD(A)}(X_i)$ such that
$h'g''_i=g''_ih$, because $\Hom_{\sfD(A)}(W_i,X_i)=0$ 
follows from $W_i \in \calW_J$.
Such $h'$ is unique, 
because $\End_{\sfD(A)}(X_i)$ is a division $K$-algebra
and $g''_i \ne 0$.
Thus we have a canonical homomorphism 
$\End_{\sfD(A)}(Y_i) \to \End_{\sfD(A)}(X_i)$.

We show the injectivity.
If $h \in \End_{\sfD(A)}(Y_i)$ satisfies $g''_ih=0$,
then the triangle \eqref{eq_tri_smc_2} yields that
there exists some $t \colon Y_i \to W_i$ such that $h=g'_it$,
but $t=0$ by Lemma \ref{Lem_ortho_0_1}; hence $h=0$.

Next we check the surjectivity.
Let $h' \in \End_{\sfD(A)}(X_i)$.
Then $g_i[1]h'g''_i \colon Y_i \to W_i[1]$ is zero 
by Lemma \ref{Lem_ortho_0_1}.
Thus the triangle \eqref{eq_tri_smc_2} implies that
we can take $h \in \End_{\sfD(A)}(Y_i)$ such that $h'g''_i=g''_ih$.

Therefore the homomorphism $\End_{\sfD(A)}(Y_i) \to \End_{\sfD(A)}(X_i)$
is an isomorphism.

(i-b)
If $j \in J$, then $\End_{\sfD(A)}(Y_j) \simeq \End_{\sfD(A)}(X_j)$ 
is trivial.

(ii)
We show $\Hom_{\sfD(A)}(Y_i,Y_{i'})=0$ for each $i \ne i' \in \{1,\ldots,n\}$.
This follows from (ii-a)--(ii-d) below.

(ii-a)
Let $i,i' \notin J$. 
Then $i \ne i'$ gives $\Hom_{\sfD(A)}(X_i,X_{i'})=0$, 
and $W_i \in \calW_J$ yields $\Hom_{\sfD(A)}(W_i,X_{i'})=0$.
Thus the triangle \eqref{eq_tri_smc_2} gives 
$\Hom_{\sfD(A)}(Y_i,X_{i'})=0$.
By Lemma \ref{Lem_ortho_0_1},
we get $\Hom_{\sfD(A)}(Y_i,W_{i'})=0$.
These and \eqref{eq_tri_smc_2} yield $\Hom_{\sfD(A)}(Y_i,Y_{i'})=0$.

(ii-b)
Let $i \notin J$ and $i'=:j' \in J$.
Then we get $\Hom_{\sfD(A)}(Y_i,Y_{i'})=\Hom_{\sfD(A)}(Y_i,X_{j'}[1])$.
This is zero by $X_{j'} \in \calW_J$ and Lemma \ref{Lem_ortho_0_1}.

(ii-c)
If $i=:j \in J$ and $i' \notin J$, then we get
$\Hom_{\sfD(A)}(Y_i,Y_{i'})=\Hom_{\sfD(A)}(X_j[1],Y_{i'})$.
This is zero, since $Y_{i'} \in \calH$ by \eqref{eq_tri_smc_2} 
and since $X_j[1] \in \calH[1]$ is clear.

(ii-d)
If $i=:j \in J$ and $i'=:j' \in J$, then we have 
$\Hom_{\sfD(A)}(Y_i,Y_{i'})=\Hom_{\sfD(A)}(X_j[1],X_{j'}[1])=0$,
because $j \ne j'$.

(iii)
We show $\Hom_{\sfD(A)}(Y_i,Y_{i'}[-l])=0$ 
for each $i,i' \in \{1,\ldots,n\}$ and $l \in \Z_{>0}$.
This follows from (iii-a)--(iii-d).

(iii-a)
Let $i,i' \notin J$.
Then $Y_i,Y_{i'} \in \calH$ by \eqref{eq_tri_smc_2}, 
so $\Hom_{\sfD(A)}(Y_i,Y_{i'}[-l])=0$.

(iii-b)
Let $i \notin J$ and $i'=:j' \in J$.
Then $Y_{i'}=X_{j'}[1]$ holds, so we have
$\Hom_{\sfD(A)}(Y_i,Y_{i'}[-l])=\Hom_{\sfD(A)}(Y_i,X_{j'}[-l+1])$.
This is zero by $X_{j'} \in \calW_J$ and Lemma \ref{Lem_ortho_0_1} if $l=1$;
and by $Y_i \in \calH$ and $X_{j'}[-l] \in \calH[<0]$ if $l\ne 1$.

(iii-c)
Let $i=:j \in J$ and $i' \notin J$.
Then $Y_i=X_j[1] \in \calH[1]$ holds, so we have
$\Hom_{\sfD(A)}(Y_i,Y_{i'}[-l])=\Hom_{\sfD(A)}(X_j[1],Y_{i'}[-l])$.
This is zero, 
since \eqref{eq_tri_smc_2} implies $Y_{i'}[-l] \in \calH[<0]$
and since $X_j[1] \in \calH[1]$.

(iii-d)
Let $i=:j \in J$ and $i'=:j' \in J$.
Then we have
$\Hom_{\sfD(A)}(Y_i,Y_{i'}[-l])=\Hom_{\sfD(A)}(X_j[1],X_{j'}[-l+1])=0$.

(iv)
Set $\calC$ as the smallest thick subcategory of $\sfD(A)$
containing $\{Y_1,\ldots,Y_n\}$. 
Then $X_j=Y_j[-1] \in \calC$ for any $j \in J$,
so $\calW_J \subset \calC$ holds.
For any $i \notin J$, we have
$W_i \in \calW_J \subset \calC$ and $Y_i \in \calC$, 
so \eqref{eq_tri_smc_2} gives $X_i \in \calC$.
Thus we obtain $X_1,\ldots,X_n \in \calC$.
Then $X \in \smc A$ implies $\calC=\sfD(A)$.

By (i)--(iv), $Y=\bigoplus_{i=1}^n Y_n \in \smc A$ holds.

(2)
is immediate by the last statement of Lemma \ref{Lem_ortho_0_1}.
\end{proof}

\subsection{The bijection}

Next we recall the bijection by \cite{KY} 
between $\twosilt A$ and $\twosmc A$.
For each $S \in \silt A$, we set
\begin{align*}
\calH_S:=\{ M \in \sfD(A) \mid 
\text{for all $l \ne 0$, $\Hom_{\sfD(A)}(S,M[l])=0$} \}.
\end{align*}
By \cite[Lemmas 3.3, 5.3]{KY},
$\calH_S$ is an abelian length category
which is the heart of the bounded t-structure
$(\calU_S,\calV_S)$ in $\sfD(A)$, where
\begin{align*}
\calU_S&:=\{ M \in \sfD(A) \mid 
\text{$\Hom_{\sfD(A)}(S,M[l])=0$ for all $l>0$} \}, \\
\calV_S&:=\{ M \in \sfD(A) \mid 
\text{$\Hom_{\sfD(A)}(S,M[-l])=0$ for all $l>0$} \}.
\end{align*}
We set $\simple \calH_S$ as the set of isoclasses of its simple objects.
Then the bijection is given as follows.

\begin{Prop}\label{Prop_silt_smc}
\cite[Theorem 6.1]{KY} 
There exists a bijection $\silt A \to \smc A$ given by 
\begin{align*}
S \mapsto \bigoplus_{M \in \simple \calH_S}M.
\end{align*}
\end{Prop}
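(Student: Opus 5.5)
Since this is the Koenig--Yang bijection \cite[Theorem 6.1]{KY}, the plan is to reconstruct its two halves: first show that $S \mapsto \bigoplus_{M \in \simple \calH_S} M$ lands in $\smc A$, then construct an inverse $\smc A \to \silt A$ and check that the two composites are identities.

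\textbf{Step 1: the map lands in $\smc A$.} Fix $S \in \silt A$. By \cite[Lemmas 3.3, 5.3]{KY}, $\calH_S$ is the heart of the bounded t-structure $(\calU_S,\calV_S)$ and is a length category. Let $X$ be the direct sum of its finitely many simple objects.
\begin{itemize}
\item Condition (a) is Schur's lemma in the abelian category $\calH_S$.
\item Condition (b) also follows from Schur's lemma, since the simples are pairwise nonisomorphic.
\item Condition (c) holds because negative self-extensions vanish inside the heart of a t-structure.
\item For condition (d), boundedness of the t-structure puts every object of $\sfD(A)$ in the thick closure of $\calH_S$. Every object of $\calH_S$ is filtered by simples, so the thick closure of $X$ is all of $\sfD(A)$.
\end{itemize}
Finiteness of the number of simples comes from the Hom-duality $\Hom(S_i,X_{i'}[l])$ recalled in the introduction. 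Namely, $\Hom_{\sfD(A)}(S,?)$ restricted to $\calH_S$ gives an equivalence $\calH_S \simeq \mod \End(S)$, so the simples correspond to the indecomposable summands $S_i$.

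\textbf{Step 2: injectivity.} The t-structure $(\calU_S,\calV_S)$ is determined by $\calH_S = \Filt(X)$: because it is bounded, $\calU_S=\Filt\{X[l] \mid l\ge 0\}$. Recover $S$ from the aisle via
\[
\add S = \{P \in \sfK^\rmb(\proj A) \mid P \in {}^{\perp}(\calU_S[1]),\ \Hom(P,\calV_S[-1])=0\}.
\]
The inclusion $\subseteq$ is the definition. For $\supseteq$, use that silting $S$ is determined by its co-heart, as in Aihara--Iyama's bijection between silting objects and bounded co-t-structures. Hence two silting complexes with the same simple-minded collection coincide.

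\textbf{Step 3: surjectivity (main obstacle).} Given $X \in \smc A$, the pair $(\calU,\calV)$ from Al-Nofayee/Koenig--Yang \cite[Proposition 5.4]{KY} is a bounded t-structure with length heart $\calH$ whose simples are the $X_i$. The hard part is producing $S_i \in \sfK^\rmb(\proj A)$ with $\Hom(S_i,X_{i'}[l]) \cong \delta_{ii'}\delta_{l0}\End(X_i)$. I would follow \cite{KY} and pass through the dg endomorphism algebra of $X$, or equivalently Rickard's construction. Using that $\sfD(A)$ is Hom-finite and $K$-linear, take the projective covers of the $X_i$ in $\calH$ as limits of iterated universal extensions by $X[l]$, $l>0$. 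These are Koszul-dual objects and should lie in $\sfK^\rmb(\proj A)$. Their direct sum $S$ satisfies the duality above, and it is silting because its compact objects span $K_0$ and $\calH_S=\calH$.

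The delicate points are:
\begin{itemize}
\item convergence of the construction, i.e.\ boundedness, which uses that $X$ is bounded and $A$ is finite dimensional;
\item showing $S$ generates $\sfK^\rmb(\proj A)$, which I would obtain from the nondegeneracy of the Hom-pairing between $K_0(\proj A)$ and $K_0(\mod A)$.
\end{itemize}
Finally, $\calH_S=\calH$ gives $\SH(S)=X$.
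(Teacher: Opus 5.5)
The paper does not prove this statement; it simply quotes Koenig--Yang \cite[Theorem 6.1]{KY}. Your proposal therefore has to stand on its own as a reconstruction of their theorem. Steps 1 and 2 are essentially fine. In Step 2 your description of $\add S$ (perfect $P$ with $\Hom_{\sfD(A)}(P,X[l])=0$ for all $l\neq 0$) is correct. However, ``$S$ is determined by its co-heart'' does not prove the inclusion $\supseteq$: what is needed is that such a $P$ actually lies in $\add S$, and this is supplied by the case $a=b$ below.

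The genuine gap is in Step 3, at the generation claim. The nondegenerate pairing between $K_0(\proj A)$ and $K_0(\mod A)$ only shows that the classes $[T_i]$ form a basis of $K_0(\proj A)\otimes\mathbb{Q}$. That does not imply that $T$ generates $\sfK^\rmb(\proj A)$ as a thick subcategory. The Kalck--Krah example recalled in Section 2 gives $U\in\psilt A$ with $|U|=|A|$ but $U\notin\silt A$; the missing part there is a phantom, which $K_0$ cannot detect.

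What you must use instead is the duality with the generating collection $X$, together with the bounded t-structure $(\calU,\calV)$. Argue as follows.
\begin{itemize}
\item Take $0\neq P\in\sfK^\rmb(\proj A)$. The set of $l$ with $\Hom_{\sfD(A)}(P,X[l])\neq 0$ is finite, and it is nonempty because $X$ generates $\sfD(A)$. Let $a\le b$ be its minimum and maximum.
\item Then $Q:=P[-a]$ lies in $\calU$, since $\Hom(Q,\calV[-1])=0$. Hence there is a triangle $Q''\to Q\to H\to Q''[1]$ with $H\in\calH$ and $Q''\in\calU[1]$.
\item A right $(\add T)$-approximation $T'\to H$ lifts to a map $g\colon T'\to Q$, because the obstruction lies in $\Hom(T',\calU[2])=0$.
\item The map $\Hom(g,X_j)$ is injective. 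This holds because $\Hom(T,-)$ is right exact on $\calH$ (as $\Hom(T,\calH[1])=0$) and nonzero on every $X_j$.
\item If $a<b$, then $\mathrm{cone}(g)[a]$ has $\Hom(-,X[l])\neq 0$ only for $l\in[a+1,b]$.
\item If $a=b$, the map $Q\to\mathrm{cone}(g)$ lands in $\calU[1]$, and $\Hom(Q,\calU[1])=0$. So $g$ is a split epimorphism and $Q\in\add T$.
\end{itemize}
Induction on $b-a$ then gives $P\in\add T[a]*\cdots*\add T[b]$.

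There is a second gap: perfectness of the $T_i$ is only asserted (``should lie in $\sfK^\rmb(\proj A)$''). You need to show three things:
\begin{itemize}
\item the tower of universal extensions stabilizes in each cohomological degree, because the attached pieces $X[k]$ move to degree $-\infty$, so its limit lies in $\sfD^-(\mod A)$;
\item each $\Hom(T_i,X[l])$ is computed at a finite stage of the tower;
\item $\Hom(T_i,M[l])=0$ for $l\gg 0$ and all $M\in\mod A$, which forces a bounded-above complex of finitely generated projectives to be perfect.
\end{itemize}

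Two smaller points. The $T_i$ are not projective covers in $\calH$; the paper notes that $\add S\not\subset\calH_S$ in general. And the equality ``$\calH_S=\calH$'' cannot serve as evidence that $T$ is silting, since identifying $\calH_T$ with a heart already presupposes silting-ness. Once the argument above shows $T$ is silting, nested hearts of bounded t-structures coincide, giving $\calH_T=\calH$ and hence $\SH(T)=X$.
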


For each $S=\bigoplus_{i=1}^n S_i \in \silt A$ and 
the corresponding $X=\bigoplus_{i=1}^n X_i \in \smc A$ above,
we fix indices of their indecomposable direct summands as follows.

\begin{Def-Prop}\label{Def-Prop_SH}
\cite[Lemma 5.3]{KY}
Let $S=\bigoplus_{i=1}^n S_i \in \silt A$,
and take $X \in \smc A$ corresponding to $S$ 
in Proposition \ref{Prop_silt_smc}.
Then we have a unique decomposition $X=\bigoplus_{i=1}^n X_i$ so that
\begin{align*}
\Hom_{\sfD(A)}(S_i,X_{i'}[l]) \simeq
\begin{cases}
\End_{\sfD(A)}(X_i) \quad \text{as left $\End_{\sfD(A)}(X_i)$-modules} 
& ((i',l)=(i,0)) \\
0 & ((i',l) \ne (i,0))
\end{cases}.
\end{align*}
Including the indices of the indecomposable direct summands, we set
\begin{align*}
\SH(S):=X=\bigoplus_{i=1}^n X_i.
\end{align*} 
\end{Def-Prop}

We give a fundamental example.

\begin{Ex}\label{Ex_SH(A)}
Assume that $A$ is a basic algebra
with $P(1),\ldots,P(n)$ all non-isomorphic indecomposable projective modules.
Then $A=\bigoplus_{i=1}^n P(i) \in \silt A$ holds,
and it is sent to $\SH(A)=\bigoplus_{i=1}^n L(i) \in \smc A$,
where $L(i)$ is the simple top of $P(i)$.
\end{Ex}

\subsection{The compability of mutations}

The rest of this section is devoted to proving the following properties;
namely, left mutations of simple-minded collections are always possible,
and compatible with those of silting complexes.

\begin{Thm}\label{Thm_smc_mut}
Let $S \in \silt A$, $X:=\SH(S)=\bigoplus_{i=1}^n X_i \in \smc A$
and $J \subset \{1,\ldots,n\}$.
We set 
$\calW_J:=\Filt \{ X_j \mid j \in J \} \subset \calH_S \subset \sfD(A)$. 
\begin{enumerate}
\item
If $i \notin J$, then there exists a triangle 
\begin{align*}
X_i[-1]\xrightarrow{g_i} W_i \xrightarrow{g'_i} Y_i \to X_i
\end{align*} 
in $\sfD(A)$ with $g_i$ a minimal left $\calW_J$-approximation,
$g'_i$ a minimal right $\calW_J$-approximation.
Thus setting $Y_j:=X_j[1]$ for any $j \in J$,
Definition-Proposition \ref{Def-Prop_smc_mut} implies that
$\mu_J^-(X)=\bigoplus_{i=1}^n Y_i \in \smc A$ is well-defined.
\item
Set $T:=\mu_J^-(S)$.
Then we have 
\begin{align*}
\SH(T)=\bigoplus_{i=1}^n Y_i=\mu_J^-(X) \in \smc A.
\end{align*}
\end{enumerate}
\end{Thm}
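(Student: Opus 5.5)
The plan is to build the triangles in (1) directly from the silting side, and then to identify $\SH(T)$ by checking the defining duality of Definition-Proposition \ref{Def-Prop_SH}.

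\textbf{Step 1: existence of minimal approximations.} $\calW_J$ is a Serre subcategory of the length category $\calH_S$, hence a length category with finitely many simples $X_j$ ($j\in J$). For each $i \notin J$ and each $W\in\calW_J$, $\Hom_{\sfD(A)}(X_i[-1],W)\simeq\Hom(X_i,W[1])$ is finite dimensional over $K$. To get a left $\calW_J$-approximation of $X_i[-1]$, I would build a universal extension inductively. Start with $X_i$, and take the universal extension of $X_i$ by $\bigoplus_{j\in J}X_j^{\oplus d_j}$ realising a basis of $\Ext^1(X_i,X_j)$ over $\End(X_j)$. Then iterate, extending the middle term again by the $X_j$'s.

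This is the step I expect to be the main obstacle: one must know the process stops. I would bound it using the silting side instead. Set $U=S/S_J$ and $T=\mu_J^-(S)=U\oplus T_J$. Define $\calW'$ to be the objects $M\in\calH_S$ with $\Hom(U,M)=0$. Then $\calW'$ equals $\calW_J$, since $\Hom(S_i,X_{i'})=0$ exactly for $i'\neq i$ and $\calW_J$ is closed under extensions, subobjects and quotients in $\calH_S$.

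The candidate is $Y_i$, the simple object of $\calH_T$ dual to $T_i=S_i$. I would show that $Y_i\in \calH_S\ast\calH_S[1]$ by comparing the t-structures: $\calU_T\supset\calU_S$ since $T\in\calU_S[-1]\ast\calU_S$, and conversely $\calU_T\subset\calU_S[1]$. Let $g_i'\colon W_i\to Y_i$ be the truncation triangle with respect to $(\calU_S[1],\calV_S[1])$ shifted, with $H^{0}_{\calH_S}$ part $W_i$ and $H^{1}$ part $X_i$. I then need to check three things: that $W_i\in\calW_J$, using $\Hom(U,W_i)=0$ from $\Hom(U,Y_i[l])$ computations; that the cone is $X_i$, since $\Hom(S,-)$ of it is concentrated at $S_i$ with multiplicity one; and that $\Hom(Y_i,\calW_J[0,1])=0$. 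The last holds because $\calW_J[1]\subset\calH_T$ (as $\Hom(T,X_j[1][l])=0$ for $l\neq0$ follows from the defining triangles of $T_J$) and $Y_i$ is simple in $\calH_T$, distinct from the $X_j[1]$. Then Lemma \ref{Lem_ortho_0_1} (c)$\Rightarrow$(a) gives minimality of $g_i$ and $g'_i$. This simultaneously proves (1) and sets up (2).

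\textbf{Step 2: proof of (2).} By Theorem \ref{Thm_smc_mut_approx}, $\bigoplus Y_i\in\smc A$. It suffices to verify the duality $\Hom(T_k,Y_{i}[l])$ from Definition-Proposition \ref{Def-Prop_SH}, since the $Y_i$ then lie in $\calH_T$ and are its simples by \cite{KY}.

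For $k\notin J$ we have $T_k=S_k$. Apply $\Hom(S_k,-)$ to $X_i[-1]\to W_i\to Y_i\to X_i$ and use $\Hom(S_k,\calW_J[l])=0$ for all $l$ (as $k\notin J$). This gives $\Hom(S_k,Y_i[l])\simeq\Hom(S_k,X_i[l])$. For $Y_j=X_j[1]$ we get $\Hom(S_k,X_j[1+l])=0$.

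For $k=j'\in J$, use $S_{j'}\to U'_{j'}\to T_{j'}\to S_{j'}[1]$. For $i\notin J$: $\Hom(U',Y_i[l])=0$ by the previous case unless $l=0$, and even for $l=0$ it vanishes since $i$ indexes a summand of $U$ only when $U'$ has that summand. Here I need that $\Hom(U'_{j'},Y_i)\to\Hom(S_{j'},Y_i)$ is bijective, using minimality of $f_{j'}$ and the fact that $\Hom(S_{j'},Y_i)\simeq\Hom(S_{j'},W_i)$ arises via the approximation. This bookkeeping gives $\Hom(T_{j'},Y_i[l])=0$. For $Y_j=X_j[1]$, we have $\Hom(U',X_j[\ast])=0$, so $\Hom(T_{j'},X_j[1+l])\simeq\Hom(S_{j'},X_j[l])$, which equals $\End(X_j)$ precisely when $j=j'$ and $l=0$, as required.
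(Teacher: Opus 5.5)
\textbf{Where Step 1 breaks.} For $i\notin J$ your argument needs the simple object $Y_i$ of $\calH_T$ to lie in $\calH_S$ itself. Without that, no triangle $W_i\to Y_i\to X_i\to W_i[1]$ with $W_i,X_i\in\calH_S$ can exist. Your comparison of t-structures does not give this. First, the inclusions are stated backwards: as written they force $\calU_S=\calU_S[1]$. The correct ones are $\calU_S[1]\subseteq\calU_T\subseteq\calU_S$, and these only give $\calH_T\subseteq\calH_S[1]\ast\calH_S$. Second, the triangle you want is not a truncation triangle, since that would put its first term in $\calH_S[1]$. It is the short exact sequence in $\calH_S$ that cuts off the top of $Y_i$.

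The missing input is the vanishing $\Hom(T_j,Y_i)=0$ for $j\in J$. Apply $\Hom(-,Y_i[l])$ to $T_j[-1]\to S_j\xrightarrow{f_j}U'_j\to T_j$ and use $\Hom(T,Y_i[l])=0$ for $l\neq 0$. This gives $\Hom(S_j,Y_i[l])=0$ for $l\neq0$, hence $Y_i\in\calH_S$.

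Your orthogonality argument then gives $\Hom(Y_i,\calW_J)=0=\Hom(Y_i,\calW_J[1])$, with one repair. Saying $Y_i$ is ``distinct from the $X_j[1]$'' is not enough, because $X_j[1]$ is not yet known to be simple in $\calH_T$. Argue instead that $Y_i$ is not a composition factor of $W[1]\in\calH_T$, because $\Hom(T_k,W[1])=\Hom(S_k,W[1])=0$ for $k\notin J$.

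It follows that the top of $Y_i$ in $\calH_S$ is a direct sum of copies of $X_i$. Moreover $X_i$ occurs exactly once as a composition factor, because $\dim_K\Hom(S_i,Y_i)=\dim_K\End(Y_i)=\dim_K\End(X_i)$. This uses Proposition \ref{Prop_E_S}~(4) for both $S$ and $T$, together with $T_i=S_i$. Hence the kernel of $Y_i\to X_i$ is a $W_i\in\calW_J$, and Lemma \ref{Lem_ortho_0_1} gives (1).

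\textbf{Step 2.} Take $k=j'\in J$ and $i\notin J$. Your claim that $\Hom(U'_{j'},Y_i)=0$ even for $l=0$ is false: this group is isomorphic to $\Hom(U'_{j'},X_i)$, which is nonzero whenever $S_i$ is a summand of $U'_{j'}$. You then say you ``need'' bijectivity of $\Hom(f_{j'},Y_i)$. That is exactly the crux of the paper's proof (Lemma \ref{Lem_M_i_smc}~(2), which rests on the nontrivial minimality argument of Lemma \ref{Lem_M_i}~(2)), and ``minimality of $f_{j'}$'' does not prove it.

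In your own set-up, however, this verification is unnecessary. Since you defined $Y_i:=\SH(T)_i$, the duality with $T$ holds by definition for $i\notin J$. The $l=0$ part of the long exact sequence above even gives bijectivity of $\Hom(f_j,Y_i)$ for free. What remains for (2) is $\SH(T)_j\simeq X_j[1]$ for $j\in J$. Your last computation gives this once you add one fact: an object of $\calH_T$ whose composition factors are all $\SH(T)_j$ and whose endomorphism ring is a division ring is isomorphic to $\SH(T)_j$. Uniqueness of minimal approximations then identifies the objects in Definition-Proposition \ref{Def-Prop_smc_mut} with the $\SH(T)_i$.

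Repaired this way, your route genuinely differs from the paper's. The paper builds the approximation intrinsically in $\mod E_S$ via the module $M_i$, and must then check the duality with $T$, paying for it with Lemma \ref{Lem_M_i}~(2). You start from $\calH_T$ and read off the approximation triangle. That proves (1) and (2) together and avoids that lemma, but it does not give a description of $W_i$ and $Y_i$ inside $\calH_S$ that is independent of $T$.
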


We prove Theorem \ref{Thm_smc_mut} in the rest of this section.
The case $\#J=1$ was already shown in \cite[Theorem 7.12]{KY}.
In view of Theorem \ref{Thm_smc_mut_approx},
the remaining tasks are
showing the existence of a minimal left $\calW_J$-approximation 
$g_i \colon X_i[-1] \to W_i$
and its compatibility with the corresponding mutation of silting complexes. 
Our strategies roughly follow the proofs of
\cite[Proposition 7.6, Remark 7.7, Lemma 7.8]{KY},
so we begin with the following property.

\begin{Lem}\label{Lem_M_i}
Assume that $A=\bigoplus_{i=1}^n P(i)$ is basic.
Let $J \subset \{1,\ldots,n\}$, $Q:=A/{\bigoplus_{j \in J}P(j)}$
and $\calW_J:=\Filt\{L(j) \mid j \in J\}$.
For each $i \in \{1,\ldots,n\} \setminus J$, the module
\begin{align*}
M_i:=P(i)/\sum_{f \in \rad_A(Q,P(i))} \Im f
\end{align*}
satisfies the following statements.
\begin{enumerate}
\item
We have $\Hom_A(M_i,\calW_J)=0$, $\Ext_A^1(M_i,\calW_J)=0$
and a short exact sequence 
$0 \to M'_i \to M_i \to L(i) \to 0$ with $M'_i \in \calW_J$.
\item
Let $j \in J$, and take a minimal left $(\add Q)$-approximation 
$f_j \colon P(j) \to Q'_j$ of $P(j)$.
Then there exists an isomorphism 
\begin{align*}
\Hom_A(f_j,M_i) \colon \Hom_A(Q'_j,M_i) \to \Hom_A(P(j),M_i).
\end{align*}
\end{enumerate}
\end{Lem}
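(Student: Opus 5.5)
The plan is to work entirely inside $\mod A$ with the projective covers, writing $\pi\colon P(i)\to M_i$ for the canonical surjection and $K:=\sum_{f\in\rad_A(Q,P(i))}\Im f$. Two elementary observations come first. Since every $f\in\rad_A(Q,P(i))$ has image in $\rad P(i)$, the module $M_i$ is a nonzero quotient of $P(i)$ with simple top $L(i)$. Since $Q$ is finite dimensional, $K$ is the image of a single radical map $\rho\colon Q''\to P(i)$ with $Q''\in\add Q$; one can take $\rho$ to be a right $(\add Q)$-approximation of $K$ composed with the inclusion. Throughout I use that $\calW_J$, computed inside $\mod A$, is just the class of modules all of whose composition factors lie in $\{L(j)\mid j\in J\}$.

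For (1), I set $M'_i:=\rad M_i$, which gives the short exact sequence $0\to M'_i\to M_i\to L(i)\to 0$.
\begin{enumerate2}
\item To show $M'_i\in\calW_J$, suppose some $L(k)$ with $k\notin J$ is a composition factor of $\rad M_i$. Then there is a nonzero $P(k)\to\rad M_i$. Lift it through $\pi$ to a map $P(k)\to P(i)$. This lift is radical: for $k\ne i$ this is automatic, and for $k=i$ it holds because the image lies in $\rad$. The lift is therefore in $\rad_A(Q,P(i))$, so its image is killed by $\pi$, which is a contradiction.
\item $\Hom_A(M_i,\calW_J)=0$ holds because the top $L(i)$ of $M_i$ satisfies $i\notin J$.
\item For $\Ext^1$, it suffices by filtration to treat $L(j)$ with $j\in J$. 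The sequence $0\to K\to P(i)\to M_i\to 0$ shows that $\Ext^1_A(M_i,L(j))$ is a quotient of $\Hom_A(K,L(j))$. Since $Q''\to K$ is surjective, $\Hom_A(K,L(j))$ injects into $\Hom_A(Q'',L(j))$, which is $0$ because $Q''$ has no summand $P(j)$.
\end{enumerate2}

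For (2), surjectivity of $\Hom_A(f_j,M_i)$ is routine. A map $P(j)\to M_i$ lifts through $\pi$ to $P(j)\to P(i)$; since $P(i)\in\add Q$, this lift factors through $f_j$, and composing with $\pi$ gives the required preimage.

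Injectivity is the main step, and I will derive it from the minimality of $f_j$. Let $h\colon Q'_j\to M_i$ be nonzero with $hf_j=0$, and lift $h$ to $\tilde h\colon Q'_j\to P(i)$. Components of $\tilde h$ that are radical are killed by $\pi$, so some summand $P(i)$ of $Q'_j$ maps by an isomorphism. After an automorphism of $Q'_j$, I may write $Q'_j=P(i)\oplus Q'''$ with $h=\pi\, p_1$, where $p_1$ and $\iota_1$ denote the projection onto and the inclusion of the first summand. The condition $hf_j=0$ means $p_1f_j$ has image in $K$. By projectivity of $P(j)$, we get $p_1f_j=\rho t$ for some $t\colon P(j)\to Q''$, and since $Q''\in\add Q$ we can write $t=uf_j$. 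Now set
\begin{align*}
\phi:=\mathrm{id}_{Q'_j}-\iota_1p_1+\iota_1\rho u\in\End_A(Q'_j).
\end{align*}
A direct check gives $\phi f_j=f_j-\iota_1(p_1-\rho u)f_j=f_j$. On the other hand, $\phi$ is congruent modulo the radical to the projection onto $Q'''$, because $\rho$ is radical. Hence $\phi$ is not an isomorphism, contradicting the minimality of $f_j$. The delicate points are the normalisation of $\tilde h$ so that $h=\pi p_1$, which uses that all other components are radical, and the choice of $\rho$ as a single radical map from $\add Q$ through which every map into $K$ from a projective factors; I will verify these carefully.
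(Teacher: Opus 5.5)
Your proposal is correct and follows essentially the paper's route. The $\Ext^1$ vanishing comes from $K$ being a quotient of an object of $\add Q$, surjectivity comes from lifting through $\pi$ and factoring through $f_j$, and injectivity comes from a contradiction with the minimality of $f_j$; the only difference is that the paper first twists $f_j$ by the automorphism $1-s\iota t'$ and then applies the idempotent onto $\Ker r$, whereas you fold both steps into the single non-invertible endomorphism $\phi$ with $\phi f_j=f_j$.
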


\begin{proof}
We take the short exact sequence
\begin{align*}
0 \to N \xrightarrow{\iota} P(i) \xrightarrow{\pi} M_i \to 0 \quad
\left( N:= \sum_{\psi \in \rad_A(Q,P(i))} \Im \psi \right).
\end{align*}

(1)
The part $\Hom_A(M_i,\calW_J)=0$ is clear, 
since $M_i$ is a factor module of $P(i)$.

We check $\Ext_A^1(M_i,\calW_J)=0$.
For each $j \in J$,
the short exact sequence above gives a short exact sequence
$\Hom_A(N,L(j)) \to \Ext_A^1(M_i,L(j)) \to \Ext_A^1(P(i),L(j))$.
The first term is zero, 
since $N$ is a factor module of $Q^{\oplus s}$ for some $s$.
The last term is clearly zero.
Thus $\Ext_A^1(M_i,L(j))=0$ holds.

Since $P(i) \notin \add Q$, we have $N \not \subset P(i)$.
Thus $M_i$ is a nonzero factor module of $P(i)$,
so we have a short exact sequence 
$0 \to M'_i \xrightarrow{\phi} M_i \to L(i) \to 0$
with $\phi$ in the radical.
To show $M'_i \in \calW_J$, it is enough to check $\Hom_A(Q,M'_i)=0$.

Let $h \colon Q \to M'_i$.
Since $\phi$ is in the radical, so is $\phi h \colon Q \to M_i$.
On the other hand, there exists $\widetilde{h} \colon Q \to P(i)$
such that $\phi h=\pi \widetilde{h}$.
Since $\phi h$ is not surjective and $\pi$ is surjective,
$\widetilde{h}$ is not surjective.
Thus $\widetilde{h}$ is in the radical, then $\pi \widetilde{h}=0$ holds.
Since $\phi$ is injective and $\alpha h=\pi \widetilde{h}=0$,
we have $h=0$.

(2)
We first show the surjectivity.
Let $h \colon P(j) \to M_i$.
Since $P(j)$ is projective and $\pi$ is surjective, there exists some 
$\widetilde{h} \colon P(j) \to P(i)$ such that $h=\pi \widetilde{h}$.
Since $f_j$ is a left $(\add Q)$-approximation,
$\widetilde{h}$ admits $t \colon Q'_j \to P(i)$ 
such that $\widetilde{h}=tf_j$.
Then we get $h=\pi t f_j$ as desired.

For the injectivity, assume the contrary that there exists
$h \colon Q'_j \to M_i$ such that $h \ne 0$ and $hf_j=0$.
Since $Q'_j$ is projective and $\pi$ is surjective, there exists some 
$r \colon Q'_j \to P(i)$ such that $h=\pi r$.

Since $h \ne 0$, we have $\Im r \not \subset N$,
so $r$ cannot be in the radical.
Then since $P(i)$ is indecomposable, $r$ is a retraction.
Thus we take a section $s \colon P(i) \to Q'_j$ such that
$rs$ is $1_{P(i)}$.
Then $Q'_j=\Im s \oplus (Q'_j/{\Im s})$ holds.

On the other hand, since $\pi rf_j=hf_j=0$,
there exists $t \colon P(j) \to N$ such that $\iota t=rf_j$.
Since $N$ is a factor module of $Q^{\oplus s}$ for some $s$,
and since $f_j$ is a left $(\add Q)$-approximation,
we can take $t' \colon Q'_j \to N$ such that $t=t'f_j$.
Then we have $\iota t' f_j=rf_j$.

Now we consider the morphism 
$f':=(1_{Q'_j}-s \iota t')f_j \colon P(j) \to Q'_j$.
Since $N \subsetneq P(i)$, so $\iota$ is in the radical.
Thus $1_{Q'_j}-s \iota t'$ is an isomorphism,
so $f'$ is also a minimal left $(\add Q)$-approximation as well as $f_j$.
However, we have
\begin{align*}
rf'=r(1_{Q'_j}-s \iota t')f_j&=r(f_j-s \iota t'f_j)\\
&=r(f_j-srf_j)=rf_j-rsrf_j=rf_j-rf_j=0.
\end{align*}
This means that $\Im f'$ is contained in $\Ker r=Q'_j/{\Im s}$,
which is a proper direct summand of $Q'_j$.
Thus the idempotent endomorphism 
$\phi \colon Q'_j \to Q'_j/{\Im s} \to Q'_j$ satisfies
$\phi f'=f'$, but $\phi$ is not an automorphism.
Therefore $f'$ cannot be a minimal left $(\add Q)$-approximation,
a contradiction.

Therefore no $h \colon Q'_j \to M_i$ satisfies $h \ne 0$ and $hf_j=0$.
This means the injectivity.
\end{proof}

Thus we have the following important example.

\begin{Ex}\label{Ex_M_i}
Let $A=\bigoplus_{i=1}^n P(i)$ be basic, $X:=\SH(A)=\bigoplus_{i=1}^n L(i)$
and $J \subset \{1,\ldots,n\}$.
For each $i \notin J$, the module $M_i$ in Lemma \ref{Lem_M_i}
admits a short exact sequence
$0 \to M'_i \to M_i \to L(i) \to 0$ 
with $M'_i \in \Filt \{L(j) \mid j \in J\}$, 
which gives rise to a triangle
\begin{align*}
L(i)[-1] \to M'_i \to M_i \to L(i).
\end{align*}
The first map a minimal left $\calW_J$-approximation
by Lemmas \ref{Lem_M_i} (1) and \ref{Lem_ortho_0_1}.
Then $Y=\bigoplus_{i=1}^n Y_i:=\mu_J^-(X) \in \smc A$
is well-defined by Theorem \ref{Thm_smc_mut_approx}.
\end{Ex}

We would like to do the same thing in the abelian length category $\calH_S$ 
for $S \in \twosilt A$.
For this purpose, we rely on the following properties.
Note that $\add S$ may not be contained in $\calH_S$.

\begin{Prop}\label{Prop_E_S}
Let $S=\bigoplus_{i=1}^n S_i \in \silt A$, 
$E_S:=\End_{\sfD(A)}(S)$ and 
$X=\bigoplus_{i=1}^n \SH(S) \in \smc A$.
\begin{enumerate}
\item
\cite[Remark 3.1.17]{BBD}
For any $H_1,H_2 \in \calH_S$, we have an isomorphism
\begin{align*}
\Ext_{\calH_S}^1(H_1,H_2) \simeq \Hom_{\sfD(A)}(H_1,H_2[1]).
\end{align*}
\item
\cite[Theorem 4.8 and its proof]{IY}
The functor $\Psi_S:=\Hom_{\sfD(A)}(S,?)$ induces the equivalences
\begin{align*}
\Psi_S \colon \calH_S \to \mod E_S, \quad
\Psi_S \colon \add S \to \proj E_S.
\end{align*}
Moreover $\Psi_S$ gives the isomorphism
\begin{align*}
\Hom_{\sfD(A)}(S',H) \simeq \Hom_{E_S}(\Psi_S(S'),\Psi_S(H))
\end{align*}
functorial at $S' \in \add S$ and $H \in \calH_S$.
\item 
We have $\simple(\mod E_S)=\{\Psi_S(X_1),\ldots,\Psi_S(X_n)\}$.
Moreover for any $i \in \{1,\ldots,n\}$, 
the object $\Psi_S(S_i) \in \proj E_S$ is the projective cover of 
the simple $E_S$-module $\Psi_S(X_i)$.
\item
For each $i \in \{1,\ldots,n\}$,
we have an isomorphism 
\begin{align*}
\End_{\sfD(A)}(S_i)/{\rad \End_{\sfD(A)}(S_i)} \to \End_{\sfD(A)}(X_i)
\end{align*}
of algebras.
\end{enumerate}
\end{Prop}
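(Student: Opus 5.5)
Parts (1) and (2) are cited (from \cite{BBD} and \cite{IY}), so I would take them as given and only derive (3) and (4) from them together with Definition-Proposition \ref{Def-Prop_SH}.

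For (3), I would first note that $\Psi_S$ is an equivalence $\calH_S \to \mod E_S$ by (2), so it sends simple objects to simple objects. By Proposition \ref{Prop_silt_smc}, the simple objects of $\calH_S$ are exactly $X_1,\ldots,X_n$. Hence $\simple(\mod E_S)=\{\Psi_S(X_1),\ldots,\Psi_S(X_n)\}$, and these are pairwise non-isomorphic.

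Next, $\Psi_S(S_i)$ is an indecomposable projective $E_S$-module, since $\Psi_S \colon \add S \to \proj E_S$ is an equivalence and $S_i$ is indecomposable. An indecomposable projective has a unique simple top. To identify it, I would compute $\Hom_{E_S}(\Psi_S(S_i),\Psi_S(X_{i'}))$ using the functorial isomorphism in (2):
\begin{align*}
\Hom_{E_S}(\Psi_S(S_i),\Psi_S(X_{i'})) \simeq \Hom_{\sfD(A)}(S_i,X_{i'}).
\end{align*}
By Definition-Proposition \ref{Def-Prop_SH}, the right-hand side is nonzero exactly when $i'=i$. So the top of $\Psi_S(S_i)$ is $\Psi_S(X_i)$, i.e.\ $\Psi_S(S_i)$ is the projective cover of $\Psi_S(X_i)$.

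For (4), I would use that for the projective cover $P=\Psi_S(S_i)$ of a simple module $L=\Psi_S(X_i)$, there is the standard algebra isomorphism $\End(P)/\rad\End(P) \simeq \End(L)$. Concretely, the map sends $\phi \in \End(P)$ to the endomorphism it induces on $P/\rad P \simeq L$. This map is surjective because endomorphisms of $L$ lift along the projective cover. Its kernel consists of the non-automorphisms, which form the radical since $\End(P)$ is local.

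Transporting this isomorphism along the fully faithful functor $\Psi_S$ on $\add S$ and on $\calH_S$ gives
\begin{align*}
\End_{\sfD(A)}(S_i)/\rad\End_{\sfD(A)}(S_i) \simeq \End_{\sfD(A)}(X_i).
\end{align*}
The only care needed is that $\Psi_S(X_i)$ is a module, not a complex, so its endomorphism ring is computed in $\mod E_S$. This is legitimate because $\Psi_S$ restricted to $\calH_S$ is an equivalence, so $\End_{E_S}(\Psi_S(X_i))\simeq\End_{\sfD(A)}(X_i)$. Making this compatibility precise is the main, though minor, obstacle.

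As a consistency check, this matches the left $\End_{\sfD(A)}(X_i)$-module isomorphism $\Hom_{\sfD(A)}(S_i,X_i)\simeq \End_{\sfD(A)}(X_i)$ in Definition-Proposition \ref{Def-Prop_SH}.
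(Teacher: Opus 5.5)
Your proposal is correct and follows essentially the same argument as the paper. For (3), you use the equivalence $\Psi_S$ together with the nonvanishing of $\Hom_{\sfD(A)}(S_i,X_i)$ from Definition-Proposition \ref{Def-Prop_SH} to identify $\Psi_S(S_i)$ as the projective cover of $\Psi_S(X_i)$. For (4), you use the standard isomorphism $\End(P)/\rad\End(P)\simeq\End(P/\rad P)$ and transport it along $\Psi_S$ on $\add S$ and on $\calH_S$.
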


\begin{proof}
(3)
By Proposition \ref{Prop_silt_smc},
we have $\simple \calH_S=\{X_1,\ldots,X_n\}$.
Thus the equivalence $\Psi_S \colon \calH_S \to \mod E_S$ in (2) 
gives $\simple (\mod E_S)=\{\Psi_S(X_1),\ldots,\Psi_S(X_n)\}$.

Next let $i \in \{1,\ldots,n\}$.
Then Definition-Proposition \ref{Def-Prop_SH} implies 
$\Hom_{\sfD(A)}(S_i,X_i) \ne 0$,
so (2) implies $\Hom_{E_S}(\Psi_S(S_i),\Psi_S(X_i)) \ne 0$ with
$\Psi_S(S_i)$ an indecomposable projective $E_S$-module and 
$X_i$ a simple $E_S$-module.
Thus $\Psi_S(S_i)$ is the projective cover of $\Psi_S(X_i)$.

(4)
Since $\Psi_S(S_i)$ is the projective cover of $\Psi_S(X_i)$ 
as shown in (3), 
we have 
\begin{align*}
\End_{E_S}(\Psi_S(S_i))/{\rad \End_{E_S}(\Psi_S(S_i))}
\simeq \End_{E_S}(\Psi_S(X_i)).
\end{align*}
Then the equivalences $\Psi_S \colon \add S \to \proj E_S$ and 
$\Psi_S \colon \calH_S \to \mod E_S$ in (2) induce the assertion.
\end{proof}

Combining Lemma \ref{Lem_M_i} and Proposition \ref{Prop_E_S},
we obtain the following properties.

\begin{Lem}\label{Lem_M_i_smc}
Let $S=\bigoplus_{i=1}^n S_i \in \silt A$, 
$X=\bigoplus_{i=1}^n X_i:=\SH(S) \in \smc A$,
$J \subset \{1,\ldots,n\}$, and
$\calW_J:=\Filt \{ X_j \mid j \in J \}\subset\sfD(A)$.
For each $i \in \{1,\ldots,n\} \setminus J$, the following hold.
\begin{enumerate}
\item
There exists a triangle 
\begin{align*}
X_i[-1] \xrightarrow{g_i} W_i \to Y_i \to X_i
\end{align*} 
in $\sfD(A)$ with $g_i$ a minimal left $\calW_J$-approximation.
In particular, we have $Y_i \in \calH_S$.
\item
Let $j \in J$,
and take a minimal left $(\add U)$-approximation $f_j \colon S_j \to U'_j$.
Then we have an isomorphism
\begin{align*}
\Hom_{\sfD(A)}(f_j,Y_i) \colon 
\Hom_{\sfD(A)}(U'_j,Y_i) \to \Hom_{\sfD(A)}(S_j,Y_i).
\end{align*}
\end{enumerate}
\end{Lem}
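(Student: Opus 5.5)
The plan is to transport Lemma \ref{Lem_M_i} through the equivalence $\Psi_S \colon \calH_S \to \mod E_S$ of Proposition \ref{Prop_E_S}(2). We may assume $S$ is basic, so $E_S$ is basic with indecomposable projectives $P(i):=\Psi_S(S_i)$. Their simple tops are $L(i):=\Psi_S(X_i)$ by Proposition \ref{Prop_E_S}(3). Set $U:=\bigoplus_{i\notin J}S_i$, so that $Q:=\Psi_S(U)=E_S/\bigoplus_{j\in J}P(j)$, and put $\calW'_J:=\Filt\{L(j)\mid j\in J\}\subset\mod E_S$. Since $\calW_J\subset\calH_S$ is a Serre subcategory, $\Psi_S$ restricts to an equivalence $\calW_J\simeq\calW'_J$.

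For (1), fix $i\notin J$ and take the module $M_i$ of Lemma \ref{Lem_M_i} for $E_S$. Let $Y_i\in\calH_S$ be such that $\Psi_S(Y_i)\cong M_i$. Lemma \ref{Lem_M_i}(1) gives a short exact sequence $0\to M'_i\to M_i\to L(i)\to 0$ with $M'_i\in\calW'_J$. Pulling it back gives a short exact sequence $0\to W_i\to Y_i\to X_i\to0$ in the heart $\calH_S$ with $W_i\in\calW_J$. Short exact sequences in a heart are triangles, so this yields a triangle $X_i[-1]\xrightarrow{g_i}W_i\to Y_i\to X_i$. Lemma \ref{Lem_M_i}(1) also gives $\Hom_{E_S}(M_i,\calW'_J)=0$ and $\Ext^1_{E_S}(M_i,\calW'_J)=0$. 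Transporting these via Proposition \ref{Prop_E_S}(1),(2) gives $\Hom_{\sfD(A)}(Y_i,\calW_J)=0$ and $\Hom_{\sfD(A)}(Y_i,\calW_J[1])=0$. Here the only point to check is that $\Ext^1$ in $\calH_S$ agrees with $\Ext^1$ in $\mod E_S$ under the exact equivalence $\Psi_S$. Lemma \ref{Lem_ortho_0_1} (c)$\Rightarrow$(a) then shows that $g_i$ is a minimal left $\calW_J$-approximation. Finally, $Y_i\in\calH_S$ holds by construction.

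For (2), let $j\in J$ and $f_j\colon S_j\to U'_j$ be a minimal left $(\add U)$-approximation. By Proposition \ref{Prop_E_S}(2), $\Psi_S\colon\add S\to\proj E_S$ is an equivalence sending $\add U$ to $\add Q$. Hence $\Psi_S(f_j)\colon P(j)\to\Psi_S(U'_j)$ is a minimal left $(\add Q)$-approximation, since both the approximation property and minimality are preserved by an equivalence. The functorial isomorphism $\Hom_{\sfD(A)}(S',H)\simeq\Hom_{E_S}(\Psi_S(S'),\Psi_S(H))$ for $S'\in\add S$ and $H\in\calH_S$ identifies $\Hom_{\sfD(A)}(f_j,Y_i)$ with $\Hom_{E_S}(\Psi_S(f_j),M_i)$. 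The latter is an isomorphism by Lemma \ref{Lem_M_i}(2).

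The only delicate step is the bookkeeping of the equivalence. One must check that the functorial isomorphism of Proposition \ref{Prop_E_S}(2) is natural in the first variable $S'\in\add S$, so that it commutes with precomposition by $f_j$. One must also check that $\Psi_S$ matches the relevant subcategories: $\calW_J$ with $\calW'_J$, and $\add U$ with $\add Q$. Everything else is a direct translation of Lemma \ref{Lem_M_i}.
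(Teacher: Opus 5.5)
Your proposal is correct and follows essentially the same route as the paper. In both, Lemma~\ref{Lem_M_i} is transported along $\Psi_S\colon\calH_S\to\mod E_S$, and Proposition~\ref{Prop_E_S}~(1),(2) gives the triangle and the vanishing of $\Hom_{\sfD(A)}(Y_i,\calW_J)$ and $\Hom_{\sfD(A)}(Y_i,\calW_J[1])$, so Lemma~\ref{Lem_ortho_0_1} (c)$\Rightarrow$(a) yields (1); for (2), $\Psi_S(f_j)$ is a minimal left $(\add\Psi_S(U))$-approximation, Lemma~\ref{Lem_M_i}~(2) applies, and the naturality you flag is automatic because the isomorphism in Proposition~\ref{Prop_E_S}~(2) is induced by the functor $\Psi_S$ itself.
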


\begin{proof}
(1)
By Proposition \ref{Prop_E_S} (3), $\Psi_S(S_i)$ is the projective cover of 
the simple $E_S$-module $\Psi_S(X_i)$.
Under this context, 
we define $M_i \in \mod E_S$ as in Lemma \ref{Lem_M_i}.
We have a short exact sequence 
$0 \to M'_i \to M_i \to \Psi_S(X_i) \to 0$ in $\mod E_S$
with $M'_i \in \Filt_{E_S}\{\Psi_S(X_j) \mid j \in J\}$
by Lemma \ref{Lem_M_i} (1).
Then Proposition \ref{Prop_E_S} (2) gives a short exact sequence
$0 \to \Psi^{-1}(M'_i) \to \Psi^{-1}(M_i) \to X_i \to 0$ in $\calH_S$.
By this and Proposition \ref{Prop_E_S} (1),
we get a triangle 
$X_i[-1] \to \Psi^{-1}(M'_i) \to \Psi^{-1}(M_i) \to X_i$ in $\sfD(A)$.

Moreover Lemma \ref{Lem_M_i} (1) gives
$\Hom_{E_S}(M_i,\Psi_S(\calW_J))=0$ and $\Ext_{E_S}^1(M_i,\Psi_S(\calW_J))=0$.
Then Proposition \ref{Prop_E_S} (2) implies
$\Hom_{\calH_S}(Y_i,\calW_J)=0$ and $\Ext_{\calH_S}^1(Y_i,\calW_J)=0$.
By Proposition \ref{Prop_E_S} (1),
the latter means $\Hom_{\sfD(A)}(Y_i,\calW_J[1])=0$.
Thus the triangle
$X_i[-1] \to \Psi^{-1}(M'_i) \to \Psi^{-1}(M_i) \to X_i$ satisfies
the desired property by Lemma \ref{Lem_ortho_0_1}.

(2)
By Proposition \ref{Prop_E_S} (2), 
$\Psi(f_j) \colon \Psi(S_j) \to \Psi(U'_j)$ 
is a minimal left $(\add \Psi(U))$-approximation in $\mod E_S$. 
Then the assertion follows from Lemma \ref{Lem_M_i} (2) and 
Proposition \ref{Prop_E_S} (2).
\end{proof}

Then we already have Theorem \ref{Thm_smc_mut} (1).

\begin{proof}[Proof of Theorem \ref{Thm_smc_mut} (1)] 
This follows from Lemma \ref{Lem_M_i_smc} (1).
\end{proof}

In particular, $\mu_J^-(X)=\bigoplus_{i=1}^n Y_i$ is well-defined.
This and $T:=\mu_J^-(S)$ 
satisfy the same duality as Definition-Proposition \ref{Def-Prop_SH}.
The most crucial part is (ii-c).

\begin{Lem}\label{Lem_T_Y_dual}
Let $S \in \silt A$, $X=\bigoplus_{i=1}^n X_i:=\SH(S) \in \smc A$,
$J \subset \{1,\ldots,n\}$ and
$\calW_J:=\Filt \{ X_j \mid j \in J \}\subset\sfD(A)$.
Set $T:=\mu_J^-(S)$ and $Y:=\mu_J^-(X)$.
Then for any $i,i' \in \{1,\ldots,n\}$ and $l \in \Z$, 
we have
\begin{align*}
\Hom_{\sfD(A)}(T_i,Y_{i'}[l]) \simeq
\begin{cases}
\End_{\sfD(A)}(Y_i) \quad \text{as left $\End_{\sfD(A)}(Y_i)$-modules} 
& ((i',l)=(i,0)) \\
0 & ((i',l) \ne (i,0))
\end{cases}.
\end{align*}
\end{Lem}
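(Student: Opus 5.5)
We split into four cases according to whether $i\in J$ and whether $i'\in J$, mirroring the bookkeeping in the proof of Theorem \ref{Thm_smc_mut_approx}. Throughout we use the triangles $S_j \xrightarrow{f_j} U'_j \to T_j \to S_j[1]$ for $j\in J$ (with $U'_j\in\add U$, $U=S/S_J$, and $T_i=S_i$ for $i\notin J$), the triangles $X_i[-1]\xrightarrow{g_i} W_i\to Y_i\to X_i$ for $i\notin J$ with $W_i\in\calW_J$, and $Y_j=X_j[1]$ for $j\in J$. We also use the duality of Definition-Proposition \ref{Def-Prop_SH} for $(S,X)$. Since $W_i\in\calW_J=\Filt\{X_j\mid j\in J\}$, that duality gives $\Hom(S_{i},W[l])=0$ for all $l$ whenever $i\notin J$ and $W\in\calW_J$. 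Moreover $Y_i\in\calH_S$ for $i\notin J$ by Lemma \ref{Lem_M_i_smc} (1).

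\textbf{Case (a): $i\notin J$, so $T_i=S_i$.} For $i'\notin J$, apply $\Hom(S_i,?[l])$ to the triangle for $Y_{i'}$. The $W_{i'}$ terms vanish, so $\Hom(S_i,Y_{i'}[l])\simeq\Hom(S_i,X_{i'}[l])$. The isomorphism is induced by $Y_{i'}\to X_{i'}$, and under the isomorphism $\End(Y_{i'})\simeq\End(X_{i'})$ of Theorem \ref{Thm_smc_mut_approx} (i-a) it is compatible with the module structures; this yields the required formula. For $i'=j'\in J$ we have $\Hom(S_i,X_{j'}[l+1])=0$ since $i\ne j'$.

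\textbf{Case (b): $i=j\in J$ and $i'=j'\in J$.} Apply $\Hom(?,X_{j'}[l+1])$ to the triangle for $T_j$. The terms $\Hom(U'_j,X_{j'}[\ast])$ vanish because $U'_j\in\add U$ and $j'\in J$. Hence $\Hom(T_j,X_{j'}[l+1])\simeq\Hom(S_j[1],X_{j'}[l+1])=\Hom(S_j,X_{j'}[l])$, which is the desired duality, with the module structure transported through $\End(Y_j)=\End(X_j)$.

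\textbf{Case (c): $i=j\in J$ and $i'\notin J$.} This is the crucial case. Apply $\Hom(?,Y_{i'}[l])$ to $S_j\to U'_j\to T_j\to S_j[1]$ to get the long exact sequence
\begin{align*}
\Hom(S_j[1],Y_{i'}[l])\to\Hom(T_j,Y_{i'}[l])\to\Hom(U'_j,Y_{i'}[l])\xrightarrow{\Hom(f_j,Y_{i'}[l])}\Hom(S_j,Y_{i'}[l]).
\end{align*}
Since $Y_{i'}\in\calH_S$ and $S_j,U'_j\in\add S$, all these terms vanish for $l\ne0$ except possibly $\Hom(S_j[1],Y_{i'}[l])=\Hom(S_j,Y_{i'}[l-1])$ at $l=1$. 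To handle $l=1$, use the injectivity of $\Hom(f_j,Y_{i'})$ at degree $0$, which appears one step earlier in the sequence. In degree $l=0$, the first term $\Hom(S_j,Y_{i'}[-1])$ is zero, and Lemma \ref{Lem_M_i_smc} (2) says $\Hom(f_j,Y_{i'})$ is injective, so $\Hom(T_j,Y_{i'})=0$. For $l=1$, the sequence contains $\Hom(U'_j,Y_{i'})\to\Hom(S_j,Y_{i'})\to\Hom(T_j,Y_{i'}[1])\to\Hom(U'_j,Y_{i'}[1])=0$. The first map is surjective by Lemma \ref{Lem_M_i_smc} (2), which forces $\Hom(T_j,Y_{i'}[1])=0$.

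Cases (a)–(c) cover all pairs $(i,i')$. The main obstacle is the $l=0,1$ analysis in case (c). It depends entirely on the bijectivity of $\Hom(f_j,Y_{i'})$ in Lemma \ref{Lem_M_i_smc} (2), together with carefully checking that the remaining terms vanish because $Y_{i'}$ lies in the heart $\calH_S$.
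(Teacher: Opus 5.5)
Your proposal is correct and follows essentially the same route as the paper: the same split according to whether $i$ and $i'$ lie in $J$, the same long exact sequences from the triangles $S_j\to U'_j\to T_j\to S_j[1]$ and $X_{i'}[-1]\to W_{i'}\to Y_{i'}\to X_{i'}$, and Lemma \ref{Lem_M_i_smc} (2) as the key input in the case $i\in J$, $i'\notin J$. The differences are only cosmetic. You transport the module structure along the ring isomorphism $\End_{\sfD(A)}(Y_i)\simeq\End_{\sfD(A)}(X_i)$ instead of comparing $K$-dimensions, and you treat $l=0$ and $l=1$ separately in case (c); note that $l=1$ needs surjectivity, not injectivity, of $\Hom_{\sfD(A)}(f_j,Y_{i'})$, which is what your computation correctly uses despite the earlier wording.
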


\begin{proof}
We set $U:=S/S_J$ and take the triangle $S_j \to U'_j \to T_j \to S_j[1]$
for each $j \in J$ as in Definition-Proposition \ref{Def-Prop_silt_mut}.

(i)
Let $(i',l)=(i,0)$.
The assertion follows from (i-a) and (i-b) below.

(i-a)
Assume $i \notin J$. 
Then we have $T_i=U_i$.
Naturally, $\Hom_{\sfD(A)}(U_i,Y_i)$ is a left $\End_{\sfD(A)}(Y_i)$-module.

By Theorem \ref{Thm_smc_mut_approx}, 
we get $\End_{\sfD(A)}(Y_i) \simeq \End_{\sfD(A)}(X_i)$ as algebras.
In particular, $\End_{\sfD(A)}(Y_i)$ is a division $K$-algebra.
Thus, to show that 
$\Hom_{\sfD(A)}(U_i,Y_i)$ is a simple left $\End_{\sfD(A)}(Y_i)$-module,
it suffices to check
$\dim_K \Hom_{\sfD(A)}(U_i,Y_i)=\dim_K \End_{\sfD(A)}(Y_i)$.

By $W_i \in \calW_J$ and Definition-Proposition \ref{Def-Prop_SH},
$\Hom_{\sfD(A)}(U_i,W_i)=0$ and $\Hom_{\sfD(A)}(U_i,W_i[1])=0$ hold.
Then the triangle $X_i[-1] \to W_i \to Y_i \to X_i$ gives an isomorphism 
$\Hom_{\sfD(A)}(U_i,Y_i) \simeq \Hom_{\sfD(A)}(U_i,X_i)$
of $K$-vector spaces.
By Definition-Proposition \ref{Def-Prop_SH}, we get that 
$\Hom_{\sfD(A)}(U_i,X_i)$ is a simple left $\End_{\sfD(A)}(X_i)$-module.
Thus we get $\dim_K \Hom_{\sfD(A)}(U_i,X_i)=\dim_K \End_{\sfD(A)}(X_i)$.
Moreover we have seen $\End_{\sfD(A)}(X_i) \simeq \End_{\sfD(A)}(Y_i)$ above.

Therefore we get $\dim_K \Hom_{\sfD(A)}(U_i,Y_i)=\dim_K \End_{\sfD(A)}(Y_i)$
as desired.

(i-b)
Assume $i=:j \in J$; then $Y_j=X_j[1]$.
By the same reason as (i-a), it suffices to check
$\dim_K \Hom_{\sfD(A)}(T_j,Y_j)=\dim_K \End_{\sfD(A)}(Y_j)$.

By Definition-Proposition \ref{Def-Prop_SH},
we get $\Hom_{\sfD(A)}(U'_j,Y_j)=0$ and $\Hom_{\sfD(A)}(U'_j[1],Y_j)=0$.
Then the triangle $S_j \to U'_j \to T_j \to S_j[1]$ with $U'_j \in \add U$
gives that $\Hom_{\sfD(A)}(T_j,Y_j) \simeq \Hom_{\sfD(A)}(S_j[1],Y_j)$
as $K$-vector spaces.
Clearly, we have $\Hom_{\sfD(A)}(S_j[1],Y_j) \simeq \Hom_{\sfD(A)}(S_j,X_j)$.
The latter is a simple left $\End_{\sfD(A)}(X_j)$-module,
so we get $\dim_K \Hom_{\sfD(A)}(S_j,X_j)=\dim_K \End_{\sfD(A)}(X_j)$.
Moreover we have $\End_{\sfD(A)}(X_j) \simeq \End_{\sfD(A)}(Y_j)$
as division algebras.

Thus we get the assertion.

(ii)
Let $(i,l) \ne (i',0)$.
The assertion follows from (ii-a) to (ii-d) below.

(ii-a)
Let $i,i' \notin J$. 
Then we have $\Hom_{\sfD(A)}(T_i,Y_{i'}[l])=
\Hom_{\sfD(A)}(S_i,Y_{i'}[l])$.
By the triangle $X_{i'}[-1] \to W_{i'} \to Y_{i'} \to X_{i'}$,
it suffices to check $\Hom_{\sfD(A)}(S_i,W_{i'}[l])=0$
and $\Hom_{\sfD(A)}(S_i,X_{i'}[l])=0$.
The former is true by $i \notin J$, $W_{i'} \in \calW_J$ 
and Definition-Proposition \ref{Def-Prop_SH}.
The latter follows from 
$(i,l) \ne (i',0)$ and Definition-Proposition \ref{Def-Prop_SH}.

(ii-b)
Let $i \notin J$ and $i'=:j' \in J$. 
Then we get
$\Hom_{\sfD(A)}(T_i,Y_{i'}[l])=\Hom_{\sfD(A)}(S_i,X_{j'}[l+1])$.
This is zero by $i \ne j'$ and Definition-Proposition \ref{Def-Prop_SH}.

(ii-c)
If $i=:j \in J$ and $i' \notin J$, then we have an exact sequence
\begin{align*}
& \Hom_{\sfD(A)}(U'_j[1],Y_{i'}[l]) \to \Hom_{\sfD(A)}(S_j[1],Y_{i'}[l])
\to \Hom_{\sfD(A)}(T_j,Y_{i'}[l]) \to \\
& \Hom_{\sfD(A)}(U'_j,Y_{i'}[l])
\to \Hom_{\sfD(A)}(S_j,Y_{i'}[l]).
\end{align*}
Note that $Y_{i'} \in \calH_S$ 
by $i' \notin J$ and Lemma \ref{Lem_M_i_smc} (1).

We show that the first morphism is isomorphic.
If $l=1$, then this follows from Lemma \ref{Lem_M_i_smc} (2). 
If $l \ne 1$, this morphism is just $0 \to 0$ 
by $Y_{i'} \in \calH_S$ and Definition-Proposition \ref{Def-Prop_SH}.

This implies that the last morphism is also isomorphic.

Therefore we get $\Hom_{\sfD(A)}(T_j,Y_{i'}[l])=0$ from the exact sequence.

(ii-d)
If $i=:j \in J$ and $i'=:j' \in J$, 
then we have $\Hom_{\sfD(A)}(T_j,Y_{j'}[l])
=\Hom_{\sfD(A)}(T_j,X_{j'}[l+1])$ and an exact sequence
\begin{align*}
\Hom_{\sfD(A)}(S_j[1],X_{j'}[l+1]) \to \Hom_{\sfD(A)}(T_j,X_{j'}[l+1]) 
\to \Hom_{\sfD(A)}(U'_j,X_{j'}[l+1]).
\end{align*}
Thus it suffices to show $\Hom_{\sfD(A)}(S_j[1],X_{j'}[l+1])=0$ 
and $\Hom_{\sfD(A)}(U'_j,X_{j'}[l+1])=0$.
The former is true by $(j,l) \ne (j',0)$
and Definition-Proposition \ref{Def-Prop_SH}.
The latter holds by $U'_j \in \add U$ 
and Definition-Proposition \ref{Def-Prop_SH}.
\end{proof}

By using this, we can show the remaining part of 
Theorem \ref{Thm_smc_mut} as follows.

\begin{proof}[Proof of Theorem \ref{Thm_smc_mut} (2)]
We set $Z=\bigoplus_{i=1}^n Z_i:=\SH(T)$,
and show $Y_i \simeq Z_i$ for each $i \in \{1,\ldots,n\}$.

By Lemma \ref{Lem_T_Y_dual}, we have 
$\Hom_{\sfD(A)}(T,Y_i[l])=0$ for any $l \ne 0$,
so $Y_i \in \calH_T$ holds.

Lemma \ref{Lem_T_Y_dual} gives also $\Hom_{\sfD(A)}(T/T_i,Y_i)=0$,
so by Proposition \ref{Prop_E_S} (2),
any composition factor of $Y_i$ in $\calH_T$ is 
isomorphic to the simple object $Z_i$ in $\calH_T$
by Definition-Proposition \ref{Def-Prop_SH}. 

Moreover $\End_{\sfD(A)}(Y_i)$ is a division algebra
by Theorem \ref{Thm_smc_mut_approx}.

Thus we get $Y_i \simeq Z_i$ as desired.
\end{proof}

\section{2-term silting complexes and 2-term simple-minded collections}
\label{Sec_2-term}

In this section, we deal with properties of 
2-term silting complexes and 2-term simple-minded collections,
focusing on their relationship with the module category $\mod A$.

\subsection{Basic properties}

First, we consider 2-term (pre)silting complexes.

\begin{Def}
Let $U \in \sfK^\rmb(\proj A)$.
Then we say that $U$ is \emph{2-term} if the terms of $U$
except $-1$st and $0$th ones are zero.
We define $\twopsilt A$ (resp.~$\twosilt A$) for 
the set of 2-term presilting (resp.~ 2-term silting) complexes 
in $\sfK^\rmb(\proj A)$.
\end{Def}

2-term (pre)silting complexes satisfy the following important property.

\begin{Prop}\label{Prop_compl}
\cite[Proposition 2.16]{Ai}
Let $U \in \twopsilt A$.
We can complete $U$ to a basic 2-term silting complex; 
that is, there exists $S \in \twosilt A$ such that $U \in \add S$.
\end{Prop}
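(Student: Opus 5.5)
Since $U \in \twopsilt A$, the plan is to use Bongartz completion, following the classical argument of \cite{AIR} adapted to presilting complexes. Write $U=(U^{-1}\xrightarrow{d}U^0)$ with $U^{-1},U^0\in\proj A$. The first step is to choose a right $(\add U)$-approximation of $A[1]$, that is, a morphism $\pi\colon U'\to A[1]$ with $U'\in\add U$ such that every morphism $U''\to A[1]$ with $U''\in\add U$ factors through $\pi$. This exists because $\Hom_{\sfK^\rmb(\proj A)}(U,A[1])$ is finite dimensional. Completing $\pi$ to a triangle gives
\begin{align*}
A \to V \to U' \xrightarrow{\pi} A[1].
\end{align*}
This triangle shows that $V$ is an extension of two 2-term complexes, so $V$ is again 2-term; concretely, $V$ is the cone of $U'[-1]\to A$.

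The second step is to show that $S:=U\oplus V$ is presilting, by checking that $\Hom(?,?[1])$ vanishes on each of the four pairs. For 2-term complexes only the shift $[1]$ needs checking, since $\Hom(P,Q[l])=0$ for $l\ge2$ and all 2-term $P,Q$.
\begin{enumerate2}
\item $\Hom(U,U[1])=0$ holds by hypothesis.
\item For $\Hom(U,V[1])$, apply $\Hom(U,?)$ to the triangle. The sequence $\Hom(U,U')\to\Hom(U,A[1])\to\Hom(U,V[1])\to\Hom(U,U'[1])=0$ is exact, and the first map is surjective by the approximation property, so $\Hom(U,V[1])=0$.
\item For $\Hom(V,U[1])$, the sequence $\Hom(U',U[1])\to\Hom(V,U[1])\to\Hom(A,U[1])$ is exact. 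The outer terms vanish, the first by presilting and the last because $A$ is projective in degree $0$ and $U$ is 2-term, so $\Hom(A,U[1])=H^1(U)=0$.
\item For $\Hom(V,V[1])$, combine (b) and (c) with the triangle. We get $\Hom(V,A[1])=0$ from $\Hom(U',A[1])\to\Hom(V,A[1])\to\Hom(A,A[1])=0$, where the first map dies after precomposing; more carefully, one uses $\Hom(V,U'[1])=0$ from (c) together with $\Hom(V,A[1])$, which is obtained by applying $\Hom(?,A[1])$ and using that $\pi$ is the approximation. Then $\Hom(V,V[1])$ is squeezed between $\Hom(V,A[1])$ and $\Hom(V,U'[1])$.
\end{enumerate2}
I expect this bookkeeping of the vanishing of $\Hom(V,A[1])$ to be the main obstacle. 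If the direct route fails, I would instead show $\Hom(V,V[1])=0$ by factoring any map $V\to V[1]$ through $U'[1]$ and then through $A[1]$.

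The third step is generation. From the triangle, $A$ lies in the thick subcategory generated by $U\oplus V$, so $S$ is silting. Finally, let $S_0$ be the basic part of $S$; it is 2-term silting, lies in $\twosilt A$, and has $U\in\add S_0$.
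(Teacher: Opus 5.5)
Your construction is the standard Bongartz completion. This is also what underlies the cited \cite[Proposition 2.16]{Ai}; the paper itself gives no proof beyond the citation. Steps (a)--(c), the 2-termness of $V$ via the cone, and the generation argument are all correct.

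The gap is in (d). There you rely on $\Hom(V,A[1])=0$, and this is false in general. Apply $\Hom(?,A[1])$ to $A \xrightarrow{a} V \xrightarrow{b} U' \xrightarrow{\pi} A[1]$. This identifies $\Hom(V,A[1])$ with the quotient of $\Hom(U',A[1])$ by the maps of the form $g\pi$ with $g \in \End(A[1])$. The approximation property says something different: every map $U' \to A[1]$ has the form $\pi m$ with $m \in \End(U')$. Here is a concrete counterexample.
\begin{itemize}
\item Let $A$ be the path algebra of type $A_2$ with $P_2$ simple and $\rad P_1 \cong P_2$, and put $U=P_2[1]$.
\item Then $\pi \colon P_2[1]^{\oplus 2} \to A[1]$ is given by the inclusion $\iota \colon P_2 \to P_1$ and $1_{P_2}$, and $V \cong (P_2 \xrightarrow{\iota} P_1)$.
\item We get $\Hom(V,A[1]) \cong \Hom_A(P_2,A)/\{h\iota \mid h \in \Hom_A(P_1,A)\} \cong K \neq 0$, even though $U \oplus V$ is silting.
\end{itemize}
So the ``squeeze'' fails as stated. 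Your fallback (factor through $U'[1]$, then through $A[1]$) does not identify what actually kills a map $V \to V[1]$.

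The repair is short; either of the following works.
\begin{itemize}
\item Apply $\Hom(?,V[1])$ to the triangle. In the exact sequence $\Hom(U',V[1]) \to \Hom(V,V[1]) \to \Hom(A,V[1])$, the left term vanishes by (b) since $U' \in \add U$. The right term is $H^1(V)=0$ because $V$ is 2-term.
\item Keep your covariant sequence, but show that $k \mapsto a[1]k$ from $\Hom(V,A[1])$ to $\Hom(V,V[1])$ is the zero map, rather than that its source vanishes. Since $\Hom(A,A[1])=0$, every $k$ equals $k'b$ for some $k' \colon U' \to A[1]$. The approximation property gives $k'=\pi m$. Also $a[1]\pi=0$, as consecutive maps in a triangle. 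Hence $a[1]k=a[1]\pi m b=0$, and exactness gives $\Hom(V,V[1]) \hookrightarrow \Hom(V,U'[1])=0$ by (c).
\end{itemize}
With this correction your argument is complete.
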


Thus $U \in \twosilt A$ is equivalent to that
both $U \in \twopsilt A$ and $|U|=|A|$ hold,
as in \cite[Proposition 3.3]{AIR}.

For any full subcategory $\calC \subset \mod A$, we define 
\begin{align*}
{^\perp \calC}&:=\{N \in \mod A \mid 
\text{$\Hom_A(N,M)=0$ for all $M \in \calC$}\},\\
{\calC^\perp}&:=\{N \in \mod A \mid 
\text{$\Hom_A(M,N)=0$ for all $M \in \calC$}\}.
\end{align*}
In the case $\calC=\{M\}$, they are simply written as
${^\perp M},{M^\perp}$.

A pair $(\calT,\calF)$ of full subcategories of $\mod A$
is called a \emph{torsion pair} in $\mod A$
if $\calF=\calT^\perp$ and $\calT={^\perp \calF}$.
A full subcategory $\calT$ is called a \emph{torsion class}
if there exists some $\calF \subset \mod A$ 
such that $(\calT,\calF)$ is a torsion pair in $\mod A$;
or equivalently, $\calT$ is closed under factor modules and extensions
in $\mod A$.
Dually, a full subcategory $\calF$ is called a \emph{torsion-free class}
if there exists some $\calT \subset \mod A$ 
such that $(\calT,\calF)$ is a torsion pair in $\mod A$;
or equivalently, $\calF$ is closed under submodules and extensions
in $\mod A$.

A full subcategory $\calC \subset \mod A$ is said to be 
\emph{functorially finite} if any module $M \in \mod A$
has a left $\calC$-approximation and a right $\calC$-approximation.
For a module $M \in \mod A$, we set
\begin{align*}
\Fac M&:=\{N \in \mod A \mid 
\text{there exists a surjection $M^{\oplus s} \to N$}\},\\
\Sub M&:=\{N \in \mod A \mid 
\text{there exists an injection $N \to M^{\oplus s}$}\}.
\end{align*}
Then with each $U \in \twopsilt A$,
we associate the following functorially finite torsion pairs.

\begin{Def-Lem}\label{Def-Lem_ftors}\cite{AS}
Let $U \in \twopsilt A$.
Then we define two torsion pairs 
$(\ovcalT_U,\calF_U)$ and $(\calT_U,\ovcalF_U)$ by
\begin{align*}
(\ovcalT_U,\calF_U)=({^\perp (H^{-1}(\nu U))},\Sub H^{-1}(\nu U)), \quad
(\calT_U,\ovcalF_U)=(\Fac H^0(U),H^0(U)^\perp).
\end{align*}
Then $\calT_U \subset \ovcalT_U$ and $\calF_U \subset \ovcalF_U$ hold,
and all of them are functorially finite.
\end{Def-Lem}

We remark that $H^0(U)$ is the $\tau$-rigid module
and $H^{-1}(\nu U)$ is the $\tau^{-1}$-rigid module 
associated with $U \in \twopsilt A$ in \cite{AIR}.
We do not recall their definitions explicitly in this paper.

The construction above gives all functorially finite torsion classes
and functorially finite torsion-free classes as follows.
We define $\ftors A$ (resp.~$\ftorf A$) 
as the set of functorially finite torsion classes 
(resp. functorially finite torsion-free classes) in $\mod A$.

\begin{Prop}\label{Prop_silt_tors}\cite[Theorems 2.7, 2.12, 3.2]{AIR}
We have the following maps.
\begin{enumerate}
\item
There exist four surjections
\begin{alignat*}{7}
\twopsilt A &\to \ftors A, &\quad&& U &\mapsto \ovcalT_U; &\quad&&
\twopsilt A &\to \ftorf A, &\quad&& U &\mapsto \calF_U; \\
\twopsilt A &\to \ftors A, &\quad&& U &\mapsto \calT_U; &\quad&&
\twopsilt A &\to \ftorf A, &\quad&& U &\mapsto \ovcalF_U.
\end{alignat*}
\item
There exist bijections
\begin{alignat*}{7}
\twosilt A &\to \ftors A, &\quad&& S &\mapsto \ovcalT_S=\calT_S; &\quad&&
\twosilt A &\to \ftorf A, &\quad&& S &\mapsto \calF_S=\ovcalF_S.
\end{alignat*}
\item
Let $U \in \twopsilt A$.
Then $U \in \twosilt A$ is equivalent to 
$(\ovcalT_U,\calF_U)=(\calT_U,\ovcalF_U)$.
\end{enumerate}
\end{Prop}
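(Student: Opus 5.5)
The plan is to deduce all three parts from two Hom-formulas for 2-term complexes, together with Bongartz completion and the Ext-projective description of functorially finite torsion classes.

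\emph{The two formulas.} For a 2-term $U \in \sfK^\rmb(\proj A)$ and $N \in \mod A$, I first record
\begin{align*}
\Hom_{\sfD(A)}(U,N) \simeq \Hom_A(H^0(U),N), \quad
\Hom_{\sfD(A)}(U,N[1]) \simeq D\Hom_A(N,H^{-1}(\nu U)).
\end{align*}
The first is immediate from the 2-term shape of $U$. The second follows from Serre duality $\Hom(U,N[1]) \simeq D\Hom(N,\nu U[-1])$ in $\sfD(A)$, since $\nu U[-1]$ has $H^{-1}(\nu U)$ as its relevant cohomology in degree $0$.

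\emph{Consequences for the torsion pairs.} By the second formula, $\ovcalT_U=\{N \mid \Hom(U,N[1])=0\}$. Presilting gives $\Hom(U,H^0(U)[1])=0$, because $\Hom(U,U[1]) \to \Hom(U,H^0(U)[1])$ is surjective for 2-term $U$. Hence $H^0(U) \in \ovcalT_U$, and so $\calT_U \subset \ovcalT_U$. Dually we get $\calF_U \subset \ovcalF_U$. Functorial finiteness of $\Fac M$ and $\Sub M$ is standard, by \cite{AS}.

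\emph{Surjectivity in (1).} Let $\calT \in \ftors A$. I take its Ext-projective generator $M$, so that $\calT=\Fac M$ and $M$ is $\tau$-rigid in the sense that $\Hom(M,\tau M)=0$. Let $e$ be the idempotent for the projectives $P$ with $\Hom(P,\calT)=0$. Then
\[
S:=(\text{minimal projective presentation of } M)\oplus eA[1]
\]
is 2-term presilting by the formulas above, and $\calT_S=\Fac M=\calT$. For the surjection $U \mapsto \ovcalT_U$, I use the Bongartz-type completion. For $U \in \twopsilt A$, I take a left $\add U$-approximation-type triangle $A \to U' \to C \to A[1]$ and set $B:=U\oplus C$. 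Then $B$ is presilting, and $\ovcalT_U=\Fac H^0(B)=\calT_B$. The torsion-free statements follow by the dual argument, applying $D$ and $\nu$, or equivalently working over $A^{\mathrm{op}}$.

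\emph{Parts (2) and (3).} For $S \in \twosilt A$, the inclusion $\calT_S \subset \ovcalT_S$ is proved above. For the reverse, let $N \in \ovcalT_S$, so $\Hom(S,N[1])=0$. Since $S$ generates $\sfK^\rmb(\proj A)$, there is a triangle $A \to S^0 \to S^1 \to A[1]$ with $S^0,S^1 \in \add S$. Applying $\Hom(?,N)$ shows that $H^0(S^0) \otimes \Hom(S^0,N) \to N$ is surjective, that is, $N \in \Fac H^0(S)$. Injectivity of $S \mapsto \calT_S$ comes from reconstructing $S$ as the Ext-projectives of $\calT_S$ plus the $P[1]$-part, which is a counting argument: $|S|=n$ forces the $\tau$-rigid pair to be support $\tau$-tilting.

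For (3), ``$\Rightarrow$'' is (2). For ``$\Leftarrow$'', suppose $\ovcalT_U=\calT_U$. Comparing with the Bongartz completion $B$ gives $\Fac H^0(B)=\Fac H^0(U)$. The Ext-projectives of this torsion class then lie in $\add H^0(U)$, and similarly on the shifted-projective part. Hence $\add B=\add U$ and $U$ is silting.

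\emph{Main obstacle.} The hardest step is the Bongartz completion, namely proving that $B$ is presilting and that $\ovcalT_U=\calT_B$. I would try to do this by a direct octahedral and approximation computation with the triangle $A\to U'\to C\to A[1]$, using the second Hom-formula.
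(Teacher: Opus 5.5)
Your two Hom formulas, the inclusion $\calT_U\subset\ovcalT_U$, the presilting complex $P_M\oplus eA[1]$ with $\calT_S=\calT$, and the proof of $\ovcalT_S\subset\calT_S$ for silting $S$ are fine. The completion step, however, is wrong as stated. Suppose $A\xrightarrow{f}U'\to C\to A[1]$ is a triangle with $f$ a left $(\add U)$-approximation. Since $H^1(A)=0$, the map $H^0(U')\to H^0(C)$ is surjective. So $B=U\oplus C$ satisfies $\calT_B=\calT_U$: it is the \emph{minimal} completion, not the maximal one. Hence $\ovcalT_U=\calT_B$ fails in general:
for $U=0$ you get $B=A[1]$ and $\calT_B=\{0\}\ne\mod A=\ovcalT_0$;
for $U=P$ an indecomposable projective module you get $\calT_B=\Fac P$, although $\ovcalT_P=\mod A$ because $H^{-1}(\nu P)=0$.
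So the step you single out as the main obstacle cannot be proved. The maximal completion needs the other triangle $A\to C\to U'\xrightarrow{g}A[1]$, with $g$ a right $(\add U)$-approximation of $A[1]$. For it, $\ovcalT_{U\oplus C}=\ovcalT_U$ follows at once from the exact sequence $\Hom(U',N[1])\to\Hom(C,N[1])\to\Hom(A,N[1])=0$. The real work is then showing that $U\oplus C$ is presilting, which uses surjectivity of $\Hom(U,g)$.

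This also leaves the surjectivity claims unproved. Knowing that each $\ovcalT_U$ equals some $\calT_B$ does not show that every $\calT\in\ftors A$ has the form $\ovcalT_U$. You also never prove surjectivity of $\twosilt A\to\ftors A$ in (2). Your left-approximation triangle is exactly the tool for this. Given $\calT$, complete $S=P_M\oplus eA[1]$ to a 2-term silting $B$ with $\calT_B=\calT_S=\calT$. Then $\ovcalT_B=\calT_B=\calT$ by your argument in (2). This gives surjectivity in (2) and of $U\mapsto\ovcalT_U$, and the torsion-free statements follow dually. You still have to prove that this $U\oplus C$ is presilting.

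Three smaller points:
\begin{enumerate}
\item The triangle $A\to S^0\to S^1\to A[1]$ with $S^0,S^1\in\add S$ does not follow from generation alone. It comes from the same construction for $U=S$, together with the fact that if $S$ is silting and $S\oplus C$ is presilting, then $C\in\add S$.
\item Your injectivity ``counting argument'' is really an appeal to the support $\tau$-tilting theory of \cite{AIR}, which is what the statement cites. A self-contained argument: if $\calT_S=\calT_{S'}$, then $\Hom(S,S'[1])\simeq D\Hom(H^0(S'),H^{-1}(\nu S))=0$, since $H^0(S')\in\calT_S=\ovcalT_S$. By symmetry $\Hom(S',S[1])=0$, so $S\oplus S'$ is presilting and $S\simeq S'$.
\item In (3), the hypothesis is needed precisely to show that the Ext-projectives of $\calT_U$ lie in $\add H^0(U)$. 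Take a right $(\add H^0(U))$-approximation $M'\to X$ with kernel $K$. Then $\Hom(U,K[1])=0$, i.e.\ $K\in\ovcalT_U=\calT_U$, so the sequence splits.
\end{enumerate}
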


Next we consider 2-term simple-minded collections.

\begin{Def}
Let $X \in \twosmc A$.
Then $X$ is said to be \emph{2-term} if $H^l(X)=0$ 
for any $l \in \Z \setminus \{-1,0\}$.
We write $\twosmc A$ for the set of isoclasses of 
2-term simple-minded collections in $\sfD(A)$.
\end{Def}

Then we have the following result.

\begin{Prop}\label{Prop_silt_smc_2}\cite[Corollary 4.3]{BY}
The bijection $\silt A \to \smc A$ in Proposition \ref{Prop_silt_smc}
is restricted to a bijection $\twosilt A \to \twosmc A$.
\end{Prop}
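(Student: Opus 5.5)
The statement to prove is Proposition \ref{Prop_silt_smc_2}: the bijection $\SH \colon \silt A \to \smc A$ sends 2-term silting complexes exactly onto 2-term simple-minded collections. I would argue through the hearts $\calH_S$ and the standard t-structure.

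For $S \in \silt A$, I would first note that $S$ is 2-term (up to isomorphism in $\sfK^\rmb(\proj A)$) if and only if $\Hom_{\sfD(A)}(S,M[l])=0$ for every $M \in \mod A$ and every $l \notin \{0,1\}$. One direction computes $\Hom(S,M[l])$ via the two-term complex of projectives, whose Hom complex into $M$ is concentrated in degrees $0,1$. For the converse, I would take a minimal complex of projectives representing $S$. If its top (respectively bottom) nonzero degree is $d$, then the cokernel, respectively the kernel, part yields a nonzero morphism $S \to L[d]$ (respectively $S\to L[d']$) for some simple module $L$, and minimality makes it nonzero in the homotopy category. This pins the degrees to $\{-1,0\}$.

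Given this reformulation, the plan is to show that $S$ is 2-term if and only if $\sfD^{\le -1}\subset \calU_S \subset \sfD^{\le 0}$, and equivalently $\mod A[1] \subset \calU_S$ and $\mod A \subset \calV_S$. In terms of the heart this reads $\calH_S \subset \Filt(\mod A[1], \mod A)$. Since $\calH_S=\Filt\{X_1,\ldots,X_n\}$ and $\Filt(\mod A[1],\mod A)$ is the class of complexes with cohomology only in degrees $-1,0$ (closed under extensions by the long exact cohomology sequence), the heart lies in this region exactly when every $X_i$ is 2-term, i.e.\ when $X\in\twosmc A$. Spelling out the chain of equivalences, I would prove that $S$ is 2-term if and only if $\Hom(S,(\mod A)[l])=0$ for $l\neq 0,1$, if and only if $\mod A[1]\subset \calU_S$ and $\mod A\subset\calV_S$, if and only if $\calH_S\subset \sfD^{[-1,0]}$. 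The middle equivalence unwinds the definitions of $\calU_S,\calV_S$, but with the shifts arranged correctly.

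The main obstacle is the last equivalence, specifically the passage between the aisles and the heart. That $\calU_S \supset \sfD^{\le -1}$ and $\calU_S\subset \sfD^{\le 0}$ imply the heart condition is immediate. For the converse, I would use boundedness. Every $M\in\mod A$ has a finite filtration by $H^k_{\calH_S}(M)[-k]$. Then I would use the orthogonality $\Hom(\calH_S,\sfD^{\le -2})=0$ and $\Hom(\sfD^{\ge 1},\calH_S)=0$, which hold because $\calH_S\subset\sfD^{[-1,0]}$, to force $M$ to lie in $\calH_S$ or $\calH_S[-1]$. Concretely, from $M \in \sfD^{[0,0]}$ and the truncation triangles I would show that the $\calH_S$-cohomologies vanish outside degrees $0,1$. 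This is the step I expect to need the most care. Alternatively, I would cite the standard fact that t-structures $\sfD^{\le -1}\subset\calU\subset\sfD^{\le0}$ correspond to hearts inside $\sfD^{[-1,0]}$, as in \cite{BY}.
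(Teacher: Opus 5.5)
The paper gives no argument of its own for this statement. It quotes \cite[Corollary 4.3]{BY}, where the restriction follows from the Koenig--Yang bijections respecting the partial orders: $S$ is 2-term iff $A\ge S\ge A[1]$, which corresponds to $(\calU_S,\calV_S)$ lying between the standard t-structure and its shift. Your proposal is a direct, self-contained version of the same chain, and its structure is sound: $S$ is 2-term $\Leftrightarrow$ $\Hom_{\sfD(A)}(S,(\mod A)[l])=0$ for $l\ne 0,1$ $\Leftrightarrow$ $(\mod A)[1]\subset\calU_S$ and $\mod A\subset\calV_S$ $\Leftrightarrow$ $\calH_S\subset\sfD^{[-1,0]}$ $\Leftrightarrow$ $X$ is 2-term, and then the bijection restricts. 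What the citation gives in addition is the order-theoretic picture; your route needs only the definitions of $\calH_S,\calU_S,\calV_S$ and minimal complexes.

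In Step 1, with cohomological indexing the top degree $d$ gives a nonzero map $S\to L[-d]$, not $S\to L[d]$. A cleaner argument: in a minimal complex all differentials are radical, so $\Hom_{\sfK^\rmb(\proj A)}(S,L[-k])\cong\Hom_A(S^k,L)$ for every simple $L$ and every $k$. This settles the top and bottom degrees at once, with no ``kernel part''.

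The step you flag as the main obstacle is stated incorrectly. The orthogonalities $\Hom(\calH_S,\sfD^{\le -2})=0$ and $\Hom(\sfD^{\ge 1},\calH_S)=0$ are false in general. For $S=A$, where $\calH_A=\mod A$, they would force $\Ext_A^2(M,N)=0$ and $\Ext_A^1(M,N)=0$ for all $M,N\in\mod A$.

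The correct orthogonalities point the other way:
\begin{enumerate}
\item $\Hom_{\sfD(A)}(\sfD^{\le -2},\calH_S)=0$, since $\calH_S\subset\sfD^{\ge -1}$. After shifting by $[k]$, this kills the nonzero truncation map $M\to H^k_{\calH_S}(M)[-k]$ when $k\ge 2$ is the top $\calH_S$-degree of $M\in\mod A$.
\item $\Hom_{\sfD(A)}(\calH_S,\sfD^{\ge 1})=0$, since $\calH_S\subset\sfD^{\le 0}$. This kills $H^k_{\calH_S}(M)[-k]\to M$ when $k\le -1$ is the bottom degree.
\end{enumerate}
The conclusion is then $M\in\calV_S\cap\calU_S[-1]$, not ``$M\in\calH_S$ or $\calH_S[-1]$''.

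In fact no object-wise argument is needed here. Boundedness gives $\calU_S=\Filt\{X_i[l]\mid l\ge 0\}$ and $\calV_S=\Filt\{X_i[-l]\mid l\ge 0\}$. So if $X$ is 2-term, then $\calU_S\subset\sfD^{\le 0}$ and $\calV_S\subset\sfD^{\ge -1}$ immediately. Passing to perpendiculars via $\calV_S=(\calU_S[1])^\perp$ and $\calU_S={}^\perp(\calV_S[-1])$ gives $\sfD^{\ge 0}\subset\calV_S$ and $\sfD^{\le -1}\subset\calU_S$, which is exactly what you need.
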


We recall a fundamental property of 2-term simple-minded collections.

\begin{Prop}\label{Prop_stalk}
\cite[Remark 4.11]{BY}
Let $X=\bigoplus_{i=1}^n X_i \in \twosmc A$.
For each $i \in \{1,\ldots,n\}$, 
the complex $X_i \in \sfD(A)$ belongs to $\mod A$ or $\mod A[1]$.
\end{Prop}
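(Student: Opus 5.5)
Since $X$ is 2-term, each $X_i$ has cohomology only in degrees $-1$ and $0$. The natural plan is to use the t-structure $(\calU,\calV)$ with heart $\calH=\Filt\{X_1,\ldots,X_n\}$, together with the standard t-structure on $\sfD(A)$, and show that an indecomposable object of $\calH$ whose cohomology is concentrated in degrees $-1,0$ must in fact be a stalk complex. To make the comparison possible, I would first use Proposition \ref{Prop_silt_smc_2} to write $X=\SH(S)$ with $S\in\twosilt A$. Then $\calH=\calH_S$, and the standard heart satisfies $\mod A[1]\subset \calU_S$ and $\mod A\subset\calV_S$, since $S$ is 2-term and $\Hom(S,M[l])=0$ for $M\in\mod A$ and $l\notin\{0,1\}$.

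Fix $i$ and consider the truncation triangle for the standard t-structure:
\begin{align*}
H^{-1}(X_i)[1]\to X_i\to H^0(X_i)\to H^{-1}(X_i)[2].
\end{align*}
The last morphism lies in $\Hom(H^0(X_i),H^{-1}(X_i)[2])=\Ext^2_A(H^0(X_i),H^{-1}(X_i))$, and this need not vanish. So the triangle need not split a priori, and some input from the simplicity of $X_i$ is required.

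The key step is to show that $H^{-1}(X_i)[1]\in\calH_S$ and $H^0(X_i)\in\calH_S$, so that the triangle is a short exact sequence in $\calH_S$. For this I compute $\Hom(S,H^0(X_i)[l])$. It vanishes for $l\neq 0,1$, and for $l=1$ the triangle gives the exact sequence
\begin{align*}
\Hom(S,X_i[1])\to\Hom(S,H^0(X_i)[1])\to\Hom(S,H^{-1}(X_i)[3]).
\end{align*}
The outer terms vanish, the first because $X_i\in\calH_S$ and the third by 2-termness. Hence $\Hom(S,H^0(X_i)[1])=0$ and $H^0(X_i)\in\calH_S$. Symmetrically, $\Hom(S,H^{-1}(X_i)[1][l])$ vanishes except for $l\in\{-1,0\}$, and the case $l=-1$ is handled using $\Hom(S,H^0(X_i)[-2])=0$ and $\Hom(S,X_i[-1])=0$, so $H^{-1}(X_i)[1]\in\calH_S$.

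With both pieces in the abelian category $\calH_S$, the triangle is a short exact sequence
\begin{align*}
0\to H^{-1}(X_i)[1]\to X_i\to H^0(X_i)\to 0
\end{align*}
in $\calH_S$, with the connecting morphism corresponding to the extension class. Since $X_i$ is simple in $\calH_S$, one of the two outer terms is zero. Therefore $X_i\cong H^0(X_i)\in\mod A$ or $X_i\cong H^{-1}(X_i)[1]\in(\mod A)[1]$.

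The main obstacle is the membership step. It relies on the inclusions $\mod A\subset \calV_S$ and $(\mod A)[1]\subset\calU_S$, which should be checked carefully from 2-termness of $S$, including the degree bookkeeping.
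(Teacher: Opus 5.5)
Your argument is correct: once $X=\SH(S)$ with $S\in\twosilt A$, the vanishing of $\Hom_{\sfD(A)}(S,M[l])$ for $M\in\mod A$ and $l\notin\{0,1\}$ places both truncation pieces $H^{-1}(X_i)[1]$ and $H^0(X_i)$ in $\calH_S$. The truncation triangle is then a short exact sequence in $\calH_S$, and simplicity of $X_i$ forces one end to vanish. The paper does not prove this itself but cites \cite[Remark 4.11]{BY}; the standard reasoning behind that citation is that the heart of an intermediate (2-term) t-structure is the tilt $\calF[1]*\calT$ of $\mod A$ at a torsion pair, which is the same idea as yours, except that you check the two memberships directly for $X_i$ rather than describing the whole heart.
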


Therefore we define the next symbols.

\begin{Nota}
Let $X=\bigoplus_{i=1}^n X_i \in \twosmc A$.
For each $i \in \{1,\ldots,n\}$, 
we define $X_i^+:=H^0(X_i)$ and $X_i^-:=H^{-1}(X_i)$.
Moreover we set $X^+:=\bigoplus_{i=1}^n X_i^+=H^0(X)$ and
$X^-:=\bigoplus_{i=1}^n X_i^-=H^{-1}(X)$.
Then we have $X=X^+ \oplus X^-[1]$.
\end{Nota}

\subsection{Semibricks}

To study 2-term simple-minded collections,
we use semibricks and related notions.
In (3), $\sfT(M)$ (resp.~$\sfF(M)$) denotes
the smallest torsion class (resp.~the smallest torsion-free class)
containing $M$.

\begin{Def}
Let $M \in \mod A$.
\begin{enumerate}
\item
We say that $M$ is a \emph{brick} in $\mod A$
if $\End_A(M)$ is a division $K$-algebra.
\item
\cite[Definition 2.1]{A-semi}
Decompose $M$ as $M=\bigoplus_{i=1}^m M_i$ with $M_i$ indecomposable. 
Then we say that $M$ is a \emph{semibrick} in $\mod A$ if 
\begin{enumerate2}
\item
for any $i \in \{1,\ldots,m\}$, the module $M_i$ is a brick; and
\item
any $i \ne j \in \{1,\ldots,m\}$ satisfy $\Hom_A(M_i,M_j)=0$.
\end{enumerate2}
\item
\cite[Definition 2.2]{A-semi}
Let $M$ be a semibrick in $\mod A$.
Then $M$ is said to be \emph{left finite} (resp. \emph{right finite})
if $\sfT(M)$ (resp. $\sfF(M)$) is functorially finite.
We write $\fLsbrick A$ (resp.~$\fRsbrick A$) for 
the set of left finite (resp.~right finite) semibricks.
\end{enumerate}
\end{Def}

In particular, if $M$ is a semibrick, then $M$ is basic.
For a semibrick $M=\bigoplus_{i=1}^m M_i$, 
its \emph{size} is defined as the number $|M|=m$.
We have the following property.

\begin{Prop}\label{Prop_LF_bound}\cite[Corollary 2.10]{A-semi}
If a semibrick $M$ is left finite or right finite, 
then we have $|M| \le |A|$.
\end{Prop}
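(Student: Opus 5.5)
The statement to prove is Proposition \ref{Prop_LF_bound}: if a semibrick $M=\bigoplus_{i=1}^m M_i$ is left finite, then $|M| \le |A|$; the right finite case is dual.

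The plan is to turn $M$ into the top of a 2-term simple-minded collection and then count. Since $M$ is left finite, $\sfT(M)$ is functorially finite. By Proposition \ref{Prop_silt_tors} (2) there is therefore a unique $S \in \twosilt A$ with $\ovcalT_S=\calT_S=\sfT(M)$. Set $X:=\SH(S) \in \twosmc A$, which is a 2-term simple-minded collection by Proposition \ref{Prop_silt_smc_2}. The target claim is that every $M_i$ is isomorphic to some $X_k^+$, with distinct $i$ giving distinct $k$. Once this holds we get $m \le |X| = |A|$, since simple-minded collections have exactly $|A|$ summands.

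To prove the claim, I would use the heart $\calH_S$. For a 2-term silting complex one has the standard description
\begin{align*}
\calH_S=\calF_S[1] * \calT_S \quad (\text{extensions } F[1]\to H\to T),
\end{align*}
which can be checked from the vanishing conditions defining $\calH_S$ together with $\calT_S=\Fac H^0(S)$. In particular $\calT_S \subset \calH_S$.

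Each $M_i$ should then be a simple object of $\calH_S$. Suppose $0 \to H' \to M_i \to H'' \to 0$ is a short exact sequence in $\calH_S$. Taking cohomology gives a long exact sequence in $\mod A$:
\begin{align*}
0 \to H^{-1}(H') \to 0 \to H^{-1}(H'') \to H^0(H') \to M_i \to H^0(H'') \to 0.
\end{align*}
Hence $H^0(H'')$ is a factor of $M_i$ lying in $\sfT(M)$. The key step is to use the description $\sfT(M)=\Filt \Fac M$ and the semibrick property to show that every nonzero map $M_i \to N$ with $N \in \sfT(M)$ is injective. Granting this, the map $M_i \to H^0(H'')$ is either zero or an isomorphism, and this splits the short exact sequence trivially. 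Distinctness of the resulting indices follows because the $M_i$ are pairwise non-isomorphic.

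The main obstacle is exactly this injectivity claim, namely that bricks in a semibrick $M$ are minimal in $\sfT(M)$ with respect to nonzero maps out of them. I would try induction on the $\Filt$-length. Write $N$ as an extension $0 \to N_1 \to N \to N_2 \to 0$ with $N_2$ a factor of some $M_k$. Composing with $N \to N_2$ reduces to maps $M_i \to \Fac M_k$. If the image of $M_i \to M_k$-quotient were a proper quotient of $M_i$, lifting would force a nonzero non-injective map $M_i \to M_k$, contradicting the semibrick condition (Hom is zero, or $\End$ is a division ring). This lifting needs care, because factors of $M_k$ need not lift, so the step may require a smarter filtration argument instead. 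As an alternative, I could use $\Hom_A(M_i,?)$ together with the fact that $M$ is a semibrick to bound $|M|$ directly through the $g$-vectors of the Bongartz completion, but I would try the simplicity route first.
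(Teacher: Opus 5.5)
Your overall plan is sound: show that each $M_i$ is a simple object of $\calH_S$, where $S\in\twosilt A$ satisfies $\calT_S=\sfT(M)$, so that the $M_i$ are pairwise distinct members of $\simple\calH_S=\{X_1,\ldots,X_n\}$. However, the step you call key is false, and the case analysis built on it is incomplete.

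A nonzero map $M_i\to N$ with $N\in\sfT(M)$ need not be injective. The projection $M_i\to M_i/\rad M_i$ is a counterexample whenever the brick $M_i$ is not simple, because torsion classes are closed under factor modules. For instance, take $A=K(1\to 2)$, $M$ the indecomposable module of length $2$, and $N$ its simple top. In fact, since $H^0(H'')$ is a factor module of $M_i$, knowing $H^0(H'')\in\sfT(M)$ tells you nothing, so no zero-or-isomorphism dichotomy for $M_i\to H^0(H'')$ can come from it.

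Even granting that dichotomy, the zero case leaves an exact sequence $0\to H^{-1}(H'')\to H^0(H')\to M_i\to 0$ with $H^{-1}(H'')\in\calF_S$ and $H^0(H')\in\calT_S$. This is a possible nonzero epimorphism $M_i\to H^{-1}(H'')[1]$ in $\calH_S$. It does not split ``trivially'' and has to be ruled out separately.

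The minimality that actually holds is for maps \emph{into} $M_i$ from $\sfT(M)=\Filt\Fac M$, and it is exactly what you need. Let $a\colon H^0(H')\to M_i$ be the map in your long exact sequence.
\begin{enumerate2}
\item If $a=0$, then $H^{-1}(H'')\cong H^0(H')$ lies in $\calF_S\cap\calT_S=\{0\}$, so $H'=0$.
\item If $a\ne 0$, choose a filtration $0=N_0\subset\cdots\subset N_r=H^0(H')$ with factors in $\Fac M$, and let $k$ be the least index with $a(N_k)\ne 0$. Then $N_{k-1}\subset\Ker a=H^{-1}(H'')\in\calF_S$ while $N_{k-1}\in\calT_S$, so $N_{k-1}=0$ and $N_k\in\Fac M$.
\item Take a surjection $M^{\oplus s}\to N_k$ and compose it with $N_k\subset H^0(H')$ and $a$. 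This gives a nonzero map $M^{\oplus s}\to M_i$, so by the semibrick condition one of its components is an automorphism of $M_i$. This yields $c\colon M_i\to H^0(H')$ with $ac$ invertible.
\item Hence $a$ is a split epimorphism. So $\Ker a$ is a direct summand of $H^0(H')\in\calT_S$ that lies in $\calF_S$, hence is zero. Thus $a$ is an isomorphism and $H''=0$.
\end{enumerate2}
With this replacement your counting argument goes through, and the right finite case follows by duality.

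For comparison, the paper gives no proof of its own and only quotes the statement from \cite{A-semi}. Granting Proposition \ref{Prop_sbrick}, the bound is immediate: $M=X^+$ for some $X\in\twosmc A$, and $|X^+|\le n$ by Lemma \ref{Lem_LF_size}.
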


We remark that left finiteness and right finiteness of semibricks
do not coincide in general.
Moreover left finiteness is not necessarily preserved
under taking direct summands;
in other words, if $M$ is a left finite semibrick and 
$N$ is a direct summand of $M$, then $N$ is always a semibrick,
but $N$ may not be left finite.
In \cite[Subsection 4.2]{A-semi}, we gave such examples.

On the other hand, for a semibrick $M$,
it is a direct summand of some left finite semibrick $N$
if and only if $M$ is a direct summand of some right finite semibrick $N'$
as shown in Corollary \ref{Cor_LF_RF_summand}.

By Proposition \ref{Prop_stalk}, we immediately have the following properties.

\begin{Lem}\label{Lem_LF_size}
Let $X=\bigoplus_{i=1}^n \in \twosmc A$.
\begin{enumerate}
\item
For each $i \in \{1,\ldots,n\}$,
one of $X_i^+$ and $X_i^-$ is a brick, and the other is zero.
\item
The module $X^+$ (resp.~$X^-$) is a semibrick,
and its size $|X^+|$ (resp.~$|X^-|$) is 
the number of elements $i \in \{1,\ldots,n\}$
such that $X_i^+ \ne 0$ (resp.~$X_i^- \ne 0$).
\end{enumerate}
\end{Lem}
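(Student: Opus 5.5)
By Proposition \ref{Prop_stalk}, each $X_i$ is isomorphic in $\sfD(A)$ to $M[0]$ or to $M[1]$ for some module $M \in \mod A$. Both parts of Lemma \ref{Lem_LF_size} reduce to this fact together with the defining conditions (a) and (b) of a simple-minded collection. I will treat (1) and (2) in turn.

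For (1), fix $i$. Since $X_i$ is a stalk complex, $X_i \simeq X_i^+$ (concentrated in degree $0$) or $X_i \simeq X_i^-[1]$. Hence at least one of $X_i^+,X_i^-$ is zero. They cannot both vanish, because $X_i \neq 0$: it is indecomposable, and its endomorphism ring is a division algebra. Now suppose $X_i \simeq X_i^+$. Full faithfulness of $\mod A \to \sfD(A)$ gives $\End_A(X_i^+) \simeq \End_{\sfD(A)}(X_i)$, which is a division $K$-algebra by condition (a); so $X_i^+$ is a brick. In the other case, the shift $[1]$ is an autoequivalence, so $\End_A(X_i^-) \simeq \End_{\sfD(A)}(X_i^-[1]) = \End_{\sfD(A)}(X_i)$, and $X_i^-$ is a brick.

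For (2), first observe that each nonzero $X_i^\pm$ is a brick and hence indecomposable. So $X^+ = \bigoplus_{i : X_i^+ \neq 0} X_i^+$ is a decomposition into indecomposables, and similarly for $X^-$. Next, take $i \neq i'$ with $X_i^+ \neq 0 \neq X_{i'}^+$. Full faithfulness gives $\Hom_A(X_i^+,X_{i'}^+) \simeq \Hom_{\sfD(A)}(X_i,X_{i'})$, and this vanishes by condition (b). For the minus parts, the shift gives $\Hom_A(X_i^-,X_{i'}^-) \simeq \Hom_{\sfD(A)}(X_i^-[1],X_{i'}^-[1]) = \Hom_{\sfD(A)}(X_i,X_{i'}) = 0$. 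Therefore $X^+$ and $X^-$ are semibricks.

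It remains to count the size. A semibrick is basic: two isomorphic nonzero summands $X_i^+ \simeq X_{i'}^+$ with $i \neq i'$ would give a nonzero morphism between them, contradicting the Hom-vanishing just shown. Hence $|X^+|$ equals the number of indices $i$ with $X_i^+ \neq 0$, and likewise $|X^-|$ equals the number of $i$ with $X_i^- \neq 0$. This bookkeeping is the only slightly delicate point, and the Hom-orthogonality disposes of it directly. There is no real obstacle in the proof.
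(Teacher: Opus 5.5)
Your proof is correct and follows the paper's approach. The paper simply says the lemma follows immediately from Proposition~\ref{Prop_stalk}, and your argument supplies the routine details that this one-line justification leaves implicit: full faithfulness of $\mod A \to \sfD(A)$ and of the shift, conditions (a) and (b) of a simple-minded collection, and the counting of summands via Hom-orthogonality.
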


Actually, $X^+$ is left finite and $X^-$ is right finite,
and these correspondences give bijections between
2-term simple-minded collections, 
left finite semibricks and right finite semibricks.

\begin{Prop}\label{Prop_sbrick}
\cite[Theorem 3.3]{A-semi}
The following statements hold.
\begin{enumerate}
\item 
We have bijections
\begin{align*}
\twosmc A \to \fLsbrick A \to \ftors A \quad \text{and} \quad
\twosmc A \to \fRsbrick A \to \ftorf A
\end{align*}
respectively given by 
\begin{align*}
X \mapsto X^+, \quad M \mapsto \sfT(M), \quad \text{and} \quad 
X \mapsto X^-, \quad M \mapsto \sfF(M).
\end{align*}
\item
Let $X \in \twosmc A$ and $S \in \twosilt A$ satisfy $X=\SH(S)$. 
Then we have 
\begin{align*}
(\ovcalT_S,\calF_S)=(\calT_S,\ovcalF_S)=(\sfT(X^+),\sfF(X^-)).
\end{align*}
\end{enumerate}
\end{Prop}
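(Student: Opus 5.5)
The plan is to prove (2) first, working in the heart $\calH_S$, and then deduce (1) from (2) together with Propositions \ref{Prop_silt_tors} and \ref{Prop_silt_smc_2}.

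\emph{Step 1: locating the summands of $X$.} Let $S \in \twosilt A$ and $X=\SH(S)$. Since $S$ is 2-term, a module $M$ lies in $\calH_S$ exactly when $\Hom_{\sfD(A)}(S,M[1])=0$, that is, when $M \in {^\perp}H^{-1}(\nu S)=\ovcalT_S=\calT_S$. Dually, $N[1] \in \calH_S$ exactly when $\Hom_{\sfD(A)}(S,N)=0$, that is, when $N \in H^0(S)^\perp=\ovcalF_S=\calF_S$. By Proposition \ref{Prop_stalk}, each $X_i$ is a module or a shifted module. Hence $X^+ \in \calT_S$ and $X^- \in \calF_S$, which gives $\sfT(X^+) \subset \calT_S$ and $\sfF(X^-) \subset \calF_S$.

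\emph{Step 2: the reverse inclusions.} To show $\calT_S \subset \sfT(X^+)$, it suffices to show $\sfT(X^+)^\perp \subset \calF_S$. Let $N \in \mod A$ with $\Hom_A(X^+,N)=0$, and let $tN$ be its torsion part with respect to $(\calT_S,\calF_S)$. Then $tN \in \calH_S$ by Step 1. If $tN \ne 0$, it has a simple subobject $X_i$ in the length category $\calH_S$. This $X_i$ cannot be of the form $X_i^-[1]$, because $\Hom_{\sfD(A)}((\mod A)[1],\mod A)=0$. So $X_i=X_i^+$, and there is a nonzero map $X_i^+ \to tN \subset N$, which is a contradiction. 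Therefore $tN=0$, $N \in \calF_S$, and $\calT_S=\sfT(X^+)$. The dual argument, using simple quotients of $(N/tN)[1]$ in $\calH_S$, gives $\calF_S=\sfF(X^-)$. Combined with Proposition \ref{Prop_silt_tors}, this proves (2).

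\emph{Step 3: proving (1).} By (2), $\sfT(X^+)=\calT_S$ is functorially finite, so $X^+$ is a left finite semibrick (it is a semibrick by Lemma \ref{Lem_LF_size}), and both maps in (1) are well-defined. By (2), the composite $X \mapsto \sfT(X^+)$ equals the composite of the bijections $\SH^{-1}\colon \twosmc A \to \twosilt A$ (Proposition \ref{Prop_silt_smc_2}) and $S \mapsto \calT_S$ (Proposition \ref{Prop_silt_tors}), so it is a bijection. Consequently $X \mapsto X^+$ is injective and $M \mapsto \sfT(M)$ is surjective onto $\ftors A$. It remains to show that $M \mapsto \sfT(M)$ is injective on semibricks. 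Then both maps are bijections, and the torsion-free side follows dually.

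\emph{Main obstacle: recovering $M$ from $\sfT(M)$.} I would characterize the summands of a semibrick $M$ as follows: they are exactly the bricks $B \in \sfT(M)$ such that every nonzero morphism $T \to B$ with $T \in \sfT(M)$ is surjective.

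To check that each summand $M_i$ has this property, write $\sfT(M)=\Filt(\Fac M)$ and take a nonzero $h\colon T \to M_i$ with $T \in \sfT(M)$. The image of $h$ lies in $\sfT(M)$, so it has a nonzero factor $F$ in $\Fac M_j$ that is a submodule of $M_i$. The resulting nonzero composite $M_j^{\oplus s} \to F \to M_i$ forces $j=i$ by the Hom-orthogonality of $M$. Since $M_i$ is a brick, a nonzero map $M_i \to M_i$ is an isomorphism, so $F=M_i$ and $h$ is surjective.

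Conversely, let $B$ be a brick with this property. Some nonzero map $M_j \to B$ exists, since $B \in \sfT(M)$. By the property it is surjective, so $B \in \Fac M_j$. Then there is a nonzero map $B \to M_k$ for some $k$; applying the same property to $M_k$, this map is surjective. The composite $M_j \to B \to M_k$ is then nonzero, hence $k=j$ and it is an isomorphism because $M_j$ is a brick. So $M_j \to B$ is also injective, and $B \cong M_j$.

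I expect this converse direction to need the most care; the rest is formal.
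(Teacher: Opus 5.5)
\textbf{There is a genuine gap in the converse half of your characterization of the summands of a semibrick.}

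The paper gives no proof of this statement; it only quotes Theorem 3.3 of the Semibricks paper. Your Steps 1--3 are sound. Using simple subobjects and simple quotients in the heart $\calH_S$ correctly gives $\calT_S=\sfT(X^+)$ and $\calF_S=\sfF(X^-)$. Reducing (1) to injectivity of $M\mapsto\sfT(M)$ on semibricks is also correct.

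The problem is the step ``then there is a nonzero map $B\to M_k$ for some $k$''. Nothing justifies it, and the characterization it is meant to prove is false. Take $A$ to be the path algebra of $1\to 2$, so $P(1)$ is uniserial with top $L(1)$ and socle $L(2)$, and let $M=P(1)$, which is a brick. Then $B=L(1)\in\Fac M\subset\sfT(M)$ is a brick. Every nonzero morphism from an object of $\sfT(M)$ to $B$ is surjective, simply because $B$ is simple. Yet $\Hom_A(L(1),P(1))=0$, and $L(1)$ is not a summand of $M$.

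You do not need the converse. Let $M$ and $N$ be semibricks with $\sfT(M)=\sfT(N)=:\calT$. Your forward direction, which is correct, says that each indecomposable summand of $M$ and each indecomposable summand of $N$ receives only surjective nonzero maps from objects of $\calT$.
\begin{enumerate}
\item Fix a summand $N_k$ of $N$. Since $0\ne N_k\in\sfT(M)=\Filt(\Fac M)$, there is a nonzero map $M_j\to N_k$ for some $j$, and it is surjective.
\item Since $0\ne M_j\in\sfT(N)$, there is a nonzero map $N_l\to M_j$ for some $l$, and it is also surjective.
\item The composite $N_l\to N_k$ is surjective, hence nonzero. Hom-orthogonality of $N$ forces $l=k$, and then the composite is an automorphism of the brick $N_k$.
\item Therefore $N_k\to M_j$ is injective as well as surjective, so $N_k\cong M_j$.
\end{enumerate}
The same argument with $M$ and $N$ swapped, together with the fact that semibricks have pairwise non-isomorphic summands, gives $M\cong N$. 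With this substitution your proof of (1) is complete.
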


As a direct consequence, we get the following property.

\begin{Lem}\label{Lem_X^+=X}
If $X \in \twosmc A$ satisfies $X^+=X$, 
then $X$ is the direct sum $\bigoplus_{i=1}^n L(i)$ of simple $A$-modules.
\end{Lem}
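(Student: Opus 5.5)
Since every $X_i$ is a module, $X^-=0$. By Proposition \ref{Prop_sbrick} (2), this gives
\begin{align*}
(\ovcalT_S,\calF_S)=(\sfT(X^+),\sfF(X^-))=(\sfT(X),\{0\}).
\end{align*}
So $\calF_S=\{0\}$, and hence $\ovcalT_S=\calT_S=\mod A$. From here I see two routes to the conclusion; I would use the first.

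\emph{First route: the heart.} Recall that $\calH_S=\Filt\{X_1,\ldots,X_n\}$. Since every $X_i$ lies in $\mod A$, which is closed under extensions in $\sfD(A)$, we get $\calH_S \subset \mod A$. By Proposition \ref{Prop_silt_tors} (2), $\Fac H^0(S)=\calT_S=\mod A$. This forces $A \in \Fac H^0(S)$, so $A$ is a direct summand of $H^0(S)^{\oplus s}$. Because $S$ is a 2-term silting complex with $|S|=|A|=n$, and a 2-term presilting complex is determined by $H^0$ (up to shifted projective summands), I expect this to give $S\simeq A$, with no summand of the form $P[1]$. Example \ref{Ex_SH(A)} then yields $X=\SH(A)=\bigoplus L(i)$. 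The main obstacle is justifying $S\simeq A$ from $\calT_S=\mod A$; I would cite the bijection $\twosilt A\to\ftors A$ of Proposition \ref{Prop_silt_tors} (2). Since $A\in\twosilt A$ with $\calT_A=\Fac A=\mod A$, injectivity of that map gives $S\simeq A$ directly.

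\emph{Second route: Proposition \ref{Prop_sbrick} (1).} The bijection $\twosmc A\to\ftors A$, $X\mapsto\sfT(X^+)$, sends both $X$ and $\bigoplus L(i)$ to $\mod A$. Note that $\bigoplus L(i)=\SH(A)$ is itself 2-term, with all its summands modules. By injectivity, $X\simeq\bigoplus L(i)$. This avoids the silting side entirely and is probably the cleanest write-up, so I would present this as the proof and mention the silting argument only as a remark.
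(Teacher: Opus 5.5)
Your proof is correct, and your second route is essentially the paper's argument: injectivity of a bijection from Proposition \ref{Prop_sbrick} (1). The paper uses the torsion-free side $X \mapsto \sfF(X^-)$, where $X^-=0$ gives $\sfF(X^-)=\{0\}$ at once. You use the torsion side $X \mapsto \sfT(X^+)$, which costs one extra step through Proposition \ref{Prop_sbrick} (2) to get $\sfT(X^+)=\mod A$.
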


\begin{proof}
Let $X \in \twosmc A$ with $X^+=X$.
Then we have $X^-=0$ and $\sfF(X^-)=\{0\}$.
Thus such $X$ is unique by Proposition \ref{Prop_sbrick}.
Clearly, if $X=\bigoplus_{i=1}^n L(i)$, then $X \in \twosmc A$ with $X^+=X$.
\end{proof}

We have seen bijections between the sets
$\twosilt A,\twosmc A,\ftors A,\fLsbrick A$ 
in Propositions \ref{Prop_silt_tors} and \ref{Prop_sbrick}.
These sets can be finite or infinite.
There is a nice result on this.

\begin{Def-Prop}\cite{DIJ,Sentieri,AnS}
A finite dimensional $K$-algebra $A$ is said to be \emph{brick finite}
if it satisfies the following equivalent conditions.
\begin{enumerate2}
\item
There exist only finitely many isoclasses of bricks.
\item
The sets $\twosilt A,\twosmc A,\ftors A,\fLsbrick A$ are finite.
\item
Every torsion class $\calT \subset \mod A$ is functorially finite.
\item
Every torsion class $\calT \subset \mod A$ has a semibrick $M$
such that $\calT=\sfT(M)$.
\end{enumerate2}
\end{Def-Prop}

\begin{proof}
(a), (b) and (c) are equivalent by \cite[Theorems 3.8, 4.2]{DIJ}.

(c)$\Leftrightarrow$(d) is \cite[Corollary 4.10]{Sentieri}
and \cite[Proposition 5.4]{AnS}.
\end{proof}

Therefore if $A$ is brick finite, then every semibrick is left finite.
The converse is still an open problem.

\begin{Conj}
Let $A$ be a finite dimensional $K$-algebra.
If every semibrick is left finite, then $A$ is brick finite.
\end{Conj}

In view of Proposition \ref{Prop_LF_bound},
our conjecture follows from Enomoto conjecture 
\cite[Conjecture 5.12]{Enomoto}, which claims that,
if the size of every semibrick in $A$ is finite,
then $A$ is brick finite.
Such observations are extended in \cite[Corollary 3.3]{MP} to 
the size of sets of Hom-orthogonal modules
which are not necessarily bricks.
There are many other interesting conjectures on brick finiteness 
\cite{A-bicpt,Demonet,Pfeifer,STV}.

We next explain how to directly calculate 
the corresponding 2-term simple-minded collection $X=\SH(S) \in \twosmc A$ 
for a given 2-term silting complex $S \in \twosilt A$ 
in Proposition \ref{Prop_silt_smc_2}.

\begin{Prop}\label{Prop_endotop}
\cite[Lemma 3.14]{A-semi}
Let $S=\bigoplus_{i=1}^n S_i \in \twosilt A$,
and $X=\bigoplus_{i=1}^n X_i:=\SH(S) \in \twosmc A$.
For each $i \in \{1,\ldots,n\}$, we have
\begin{align*}
X_i^+=H^0(S_i)/\sum_{f \in \rad_A(H^0(S),H^0(S_i))} \Im f, \quad 
X_i^-=\bigcap_{f \in \rad_A(H^{-1}(\nu S_i),H^{-1}(\nu S))} \Ker f.
\end{align*}
Therefore the semibricks $X^+$ and $X^-$ are given by
\begin{align*}
X^+&=H^0(S)/\sum_{f \in \rad \End_A(H^0(S))} \Im f
=H^0(S)/\rad_{\End_A(H^0(S))}H^0(S), \\
X^-&=\bigcap_{f \in \End_A(H^{-1}(\nu S))} \Ker f
=\soc_{\End_A(H^{-1}(\nu S))}H^{-1}(\nu S).
\end{align*}
\end{Prop}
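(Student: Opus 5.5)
The plan is to prove the formula for $X_i^+$ directly from the duality in Definition-Proposition \ref{Def-Prop_SH} together with the heart $\calH_S$. The formula for $X_i^-$ then follows by the dual argument, using $\nu$ and $H^{-1}(\nu S)$ in place of $H^0(S)$. The global formulas for $X^+$ and $X^-$ follow by summing over $i$: $\rad \End_A(H^0(S))$ decomposes into the components $\rad_A(H^0(S),H^0(S_i))$.

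\emph{Step 1: two Hom identities.} For a 2-term complex $S'$ and a module $M$, I will record
\begin{align*}
\Hom_{\sfD(A)}(S',M)\simeq\Hom_A(H^0(S'),M), \qquad
\Hom_{\sfD(A)}(S',M[1])\simeq D\Hom_A(M,H^{-1}(\nu S')).
\end{align*}
The second comes from Serre duality $\Hom(S',M[1])\simeq D\Hom(M[1],\nu S')$. From these I deduce that every $M \in \calT_S=\Fac H^0(S)$ lies in $\calH_S$. Indeed, $\calT_S=\ovcalT_S={^\perp H^{-1}(\nu S)}$ by Proposition \ref{Prop_silt_tors}, and the other degrees vanish because $S$ is 2-term. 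Also $X_i^+ \in \sfT(X^+)=\calT_S$ by Proposition \ref{Prop_sbrick}.

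\emph{Step 2: $M_i:=H^0(S_i)/\sum_{f\in\rad_A(H^0(S),H^0(S_i))}\Im f$ lies in $\calH_S$ and is $X_i$-isotypic.} Since $M_i \in \Fac H^0(S)$, Step 1 gives $M_i \in \calH_S$. Let $i'\ne i$ and take a map $H^0(S_{i'})\to M_i$. Lifting it through the projection $H^0(S_i)\to M_i$ is not possible in general, so I instead use that $\Hom_A(H^0(S_{i'}),M_i)$ is computed from maps $H^0(S_{i'})\to H^0(S_i)$ modulo radical ones; this needs $\Ext^1_A(H^0(S_{i'}),\,\cdot\,)$-vanishing of the kernel, which holds because the kernel lies in $\Fac H^0(S)$ and $H^0(S)$ is Ext-projective in $\calT_S$. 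A non-radical map $H^0(S_{i'})\to H^0(S_i)$ would be an isomorphism, which is impossible since $H^0(S)$ is basic $\tau$-rigid, hence indecomposable summands of $H^0(S)$ are pairwise non-isomorphic. So $\Hom_{\sfD(A)}(S_{i'},M_i)=0$. Via $\Psi_S$ and Proposition \ref{Prop_E_S}(3), every composition factor of $M_i$ in $\calH_S$ is therefore $X_i$. The number of such factors is $\dim_K\Hom(S_i,M_i)/\dim_K\End(X_i)$, and $\Hom(S_i,M_i)\simeq \End_A(H^0(S_i))/\rad$. The surjection $\End(S_i)\to\End_A(H^0(S_i))$ induces an isomorphism of the division rings $\End(S_i)/\rad\simeq\End_A(H^0(S_i))/\rad$. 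By Proposition \ref{Prop_E_S}(4) this is $\End(X_i)$, so $M_i$ has exactly one composition factor. Hence $M_i\simeq X_i$ whenever $H^0(S_i)\ne0$.

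\emph{Step 3: the zero case.} If $H^0(S_i)=0$, then $M_i=0$. In this case $X_i^+=0$, since otherwise $\Hom(S_i,X_i)=\Hom_A(H^0(S_i),X_i^+)=0$, contradicting Definition-Proposition \ref{Def-Prop_SH}. Conversely, if $H^0(S_i)\ne0$, Step 2 gives $X_i\simeq M_i\in\mod A$. By Proposition \ref{Prop_stalk} and the indexing, this forces $X_i^+=M_i$ and $X_i^-=0$.

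The main obstacle is the Hom-vanishing in Step 2, namely that maps $H^0(S_{i'})\to M_i$ lift to $H^0(S_i)$. I would first try the Ext-projectivity of $H^0(S)$ in $\Fac H^0(S)$ (from \cite{AIR}) applied to the short exact sequence defining $M_i$. Alternatively, I would realize $M_i$ as $\Psi_S^{-1}$ of the top of $\Psi_S(S_i)$ restricted to the image of $\calT_S$, in the spirit of Lemma \ref{Lem_M_i}.
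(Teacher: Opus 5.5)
The paper does not reprove this statement; it quotes \cite[Lemma 3.14]{A-semi}, so I judge your argument on its own. Much of it is sound. In particular, the step you single out as the main obstacle is fine: $\tau$-rigidity of $H^0(S)$ gives $\Ext^1_A(H^0(S),\Fac H^0(S))=0$, so maps $H^0(S_{i'})\to M_i$ do lift.

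The gap is the case distinction in Steps 2 and 3. Put $N_i:=\sum_{f\in\rad_A(H^0(S),H^0(S_i))}\Im f$, so $M_i=H^0(S_i)/N_i$. The lifting gives $\Hom_A(H^0(S_i),M_i)\simeq\End_A(H^0(S_i))/\Hom_A(H^0(S_i),N_i)$. You then replace $\Hom_A(H^0(S_i),N_i)$ by $\rad\End_A(H^0(S_i))$, which is valid only if $N_i\ne H^0(S_i)$, i.e.\ only if $M_i\ne 0$.

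However, $M_i=0$ with $H^0(S_i)\ne 0$ really occurs; by Nakayama's lemma it happens exactly when $H^0(S_i)\in\Fac H^0(S/S_i)$. For example, let $A$ be hereditary of type $A_2$ with $0\to L(2)\to P(1)\to L(1)\to 0$, and let $S=S_1\oplus S_2$ with $S_1=P(1)$ and $S_2=(P(2)\to P(1))$. Then $H^0(S)=P(1)\oplus L(1)$. The surjection $P(1)\to L(1)$ is a radical map, so $M_2=0$, whereas $\SH(S)=P(1)\oplus L(2)[1]$. Hence the claim ``$M_i\simeq X_i$ whenever $H^0(S_i)\ne 0$'' and the ``conversely'' in Step 3 are false. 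The case $0\ne H^0(S_i)\in\Fac H^0(S/S_i)$, which is exactly where the formula asserts $X_i^+=0$, is never handled. The dual argument for $X_i^-$ has the same hole.

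The repair is to split according to whether $M_i\ne 0$. If $M_i\ne 0$, your Step 2 works verbatim and gives $X_i\simeq M_i$, hence $X_i^+=M_i$. What is missing is the implication $X_i^+\ne 0\Rightarrow M_i\ne 0$.

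So suppose $X_i\in\mod A$ (hence $H^0(S_i)\ne 0$ by your Step 3), and let $h\colon H^0(S_i)\to X_i$. Let $\tilde h\in\Hom_{\sfD(A)}(S_i,X_i)$ correspond to $h$ under your Step 1 isomorphism. Then $h$ kills $N_i$, for two reasons.
\begin{enumerate}
\item For $i'\ne i$ and any $f\colon H^0(S_{i'})\to H^0(S_i)$, we have $hf=0$, because $\Hom_A(H^0(S_{i'}),X_i)\simeq\Hom_{\sfD(A)}(S_{i'},X_i)=0$.
\item For $g\in\rad\End_A(H^0(S_i))$, take a lift $\tilde g\in\End_{\sfD(A)}(S_i)$. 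It is radical, since $H^0\colon\End_{\sfD(A)}(S_i)\to\End_A(H^0(S_i))$ is a surjection of local rings. By naturality, $hg$ corresponds to $\tilde h\tilde g$. This is zero by Proposition \ref{Prop_E_S} (2), (3): it is a radical endomorphism of the projective cover $\Psi_S(S_i)$ followed by a map to its simple top $\Psi_S(X_i)$.
\end{enumerate}
Hence $0\ne\Hom_A(H^0(S_i),X_i)=\Hom_A(M_i,X_i)$, so $M_i\ne 0$. The two cases together give $X_i^+=M_i$ for every $i$. The dual argument, with the same correction, gives the formula for $X_i^-$.
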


\subsection{The main results and applications}

For a 2-term silting complex $S \in \twosilt A$,
the left mutation $\mu_J^-(S)$ at each direct summand $S_J$ may not be 2-term.
We prove the following characterization when
$\mu_J^-(S)$ is again 2-term.

\begin{Thm}\label{Thm_within}
Let $S=\bigoplus_{i=1}^n S_i \in \twosilt A$,
$X=\bigoplus_{i=1}^n X_i := \SH(S) \in \twosmc A$ and
$J \subset \{1,\ldots,n\}$.
Set $U:=S/S_J$, $T:=\mu_J^-(S)$ and $Y:=\mu_J^-(X)$.
Then the following conditions are equivalent.
\begin{enumerate2}
\item
The silting complex $T$ satisfies $T \in \twosilt A$.
\item
The simple-minded collection $Y$ satisfies $Y \in \twosmc A$.
\item
For any $j \in J$, we have $X_j^+ \ne 0$.
\item
The torsion pair $(\ovcalT_S,\calF_S)$ coincide with $(\ovcalT_U,\calF_U)$.
\end{enumerate2}
In this case, we have 
\begin{align*}
\sfT(Y^+)=\calT_T=\calT_U \subset \ovcalT_U=\ovcalT_S=\sfT(X^+), \quad
\sfF(Y^-)=\ovcalF_T=\ovcalF_U \supset \calT_U=\calT_S=\sfF(X^-),
\end{align*}
where $Y:=\SH(T)=\mu_J^-(X)$.
\end{Thm}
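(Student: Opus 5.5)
We will prove the cycle (a)$\Leftrightarrow$(b), (b)$\Leftrightarrow$(c) and (c)$\Leftrightarrow$(d), and then read off the inclusions.

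\emph{(a)$\Leftrightarrow$(b).} By Theorem \ref{Thm_smc_mut} (2), $Y=\SH(T)$. Proposition \ref{Prop_silt_smc_2} says that $\SH$ restricts to a bijection $\twosilt A \to \twosmc A$. Hence $T$ is 2-term if and only if $Y$ is.

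\emph{(b)$\Leftrightarrow$(c).} For $j\in J$ we have $Y_j=X_j[1]$.
\begin{itemize}
\item If some $X_j^+=0$, then $X_j=X_j^-[1]$, so $Y_j=X_j^-[2]$ has nonzero cohomology in degree $-2$. Thus $Y\notin\twosmc A$.
\item Conversely, assume every $X_j\in\mod A$. Then each $Y_j=X_j[1]$ has cohomology only in degree $-1$.
\item For $i\notin J$, use the triangle $X_i[-1]\to W_i\to Y_i\to X_i$ with $W_i\in\calW_J\subset\add\{X_j\}$-filtered. Every $X_j$ with $j\in J$ lies in $\mod A$, so $W_i\in\mod A$, since $\mod A$ is closed under extensions in $\sfD(A)$.
\item $X_i$ lies in $\mod A$ or $(\mod A)[1]$ by Proposition \ref{Prop_stalk}. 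The long exact cohomology sequence of the triangle then shows $H^l(Y_i)=0$ for $l\notin\{-1,0\}$.
\item Therefore $Y$ is 2-term.
\end{itemize}

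\emph{(c)$\Leftrightarrow$(d).} By Proposition \ref{Prop_sbrick}, $\calF_S=\sfF(X^-)$. Since $U\in\add S$, we have $H^{-1}(\nu U)\in\add H^{-1}(\nu S)$, hence $\calF_U\subset\calF_S$. So (d) amounts to $\calF_S\subset\calF_U$.

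First suppose (c), and fix $i$ with $X_i^-\ne0$. Then $i\notin J$, so $S_i$ is a summand of $U$. By Proposition \ref{Prop_endotop}, $X_i^-$ is a submodule of $H^{-1}(\nu S_i)$. Hence $X^-\in\Sub H^{-1}(\nu U)=\calF_U$, and $\calF_S=\sfF(X^-)\subset\calF_U$.

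Now suppose (c) fails, so there is $j\in J$ with $X_j^-\ne0$. We want $X_j^-\notin\calF_U$. Assume instead that $X_j^-\in\Sub H^{-1}(\nu U)$. Then there is a nonzero map $X_j^-\to H^{-1}(\nu S_i)$ for some $i\notin J$.
\begin{itemize}
\item By Nakayama duality, $\Hom_A(N,H^{-1}(\nu S_i))\simeq D\Hom_{\sfD(A)}(S_i,N[1])$ for $N\in\mod A$ (the standard adjunction for 2-term complexes).
\item This gives $\Hom_{\sfD(A)}(S_i,X_j^-[1])=\Hom(S_i,X_j)\ne0$.
\item That contradicts Definition-Proposition \ref{Def-Prop_SH}, since $i\ne j$.
\end{itemize}
This duality step is the one I expect to require the most care. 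If it proves awkward, the fallback is to use $\calH_S$ and Proposition \ref{Prop_E_S} directly.

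\emph{Final inclusions.} Assume the equivalent conditions hold.
\begin{itemize}
\item By Proposition \ref{Prop_sbrick}, $\sfT(Y^+)=\calT_T$ and $\sfT(X^+)=\ovcalT_S$.
\item Condition (d) gives $\ovcalT_S=\ovcalT_U$.
\item Since $U\in\add T$ and $T$ is 2-term silting, Proposition \ref{Prop_silt_tors} gives $\calT_U\subset\calT_T$.
\item For the reverse inclusion, the triangles $S_j\to U'_j\to T_j\to S_j[1]$ give an exact sequence $H^0(U'_j)\to H^0(T_j)\to H^0(S_j[1])=0$. Hence $H^0(T)\in\Fac H^0(U)$, and so $\calT_T=\calT_U$.
\item The statement for torsion-free classes follows by taking perpendicular categories: $\ovcalF_T=\ovcalF_U$ and $\calF_S=\calF_U$.
\end{itemize}
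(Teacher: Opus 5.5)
Your proof is correct. The parts (a)$\Leftrightarrow$(b), (b)$\Leftrightarrow$(c) and (c)$\Rightarrow$(d) follow the paper's arguments; you diverge from the paper at two points.

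The first is (d)$\Rightarrow$(c). The paper argues directly: (d) puts $H^{-1}(\nu S_j)$ into $\calF_U=\Sub H^{-1}(\nu U)$, and the resulting monomorphism $H^{-1}(\nu S_j)\to H^{-1}(\nu U)^{\oplus s}$ is a radical map. The formula $X_j^-=\bigcap_f \Ker f$ of Proposition \ref{Prop_endotop} then forces $X_j^-=0$. You argue by contraposition instead. You show that a nonzero $X_j^-$ with $j\in J$ has no nonzero map to $H^{-1}(\nu U)$, using the duality $\Hom_{\sfD(A)}(S_i,N[1])\simeq D\Hom_A(N,H^{-1}(\nu S_i))$ together with the Hom-vanishing in Definition-Proposition \ref{Def-Prop_SH}. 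Since $X_j^-\in\sfF(X^-)=\calF_S$, this shows $\calF_S\ne\calF_U$. The duality is valid: Serre duality gives $D\Hom_{\sfD(A)}(N,\nu S_i[-1])$, and $\nu S_i[-1]$ is a complex of injectives in degrees $0,1$, so this equals $D\Hom_A(N,H^{-1}(\nu S_i))$.

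What each route buys: yours uses only the defining orthogonality of $\SH$. It avoids Proposition \ref{Prop_endotop} and the radical-ness of that monomorphism, which tacitly needs $H^{-1}(\nu S_j)$ to share no indecomposable summand with $H^{-1}(\nu U)$. The cost is importing a duality formula the paper never states, whereas the paper's route stays within its quoted toolkit.

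The second point is the equality $\calT_T=\calT_U$ in the final chain. The paper appeals to ``(d) and its dual'', meaning the dual theorem applied to the right mutation $S=\mu_J^+(T)$. You read it off directly from the exact sequence $H^0(U'_j)\to H^0(T_j)\to H^1(S_j)=0$. This is more self-contained, and it is the same argument the paper uses later in Lemma \ref{Lem_S_T_common}. You also correctly read the statement's ``$\calT_U=\calT_S=\sfF(X^-)$'' as a typo for $\calF_U=\calF_S=\sfF(X^-)$.
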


\begin{proof}
Note that $Y=\SH(T)$ by Theorem \ref{Thm_smc_mut}.

(a)$\Leftrightarrow$(b) is clear by Proposition \ref{Prop_silt_smc_2}.

(b)$\Rightarrow$(c)
If (c) does not hold, then Proposition \ref{Prop_stalk} implies that
there exists some $j \in J$ such that $X_j \in (\mod A)[1]$.
Then Theorem \ref{Thm_smc_mut} gives $Y_j=X_j[1] \in (\mod A)[2]$,
so $Y \notin \twosmc (A)$.

(c)$\Rightarrow$(b)
By Theorem \ref{Thm_smc_mut} and (c), 
$Y_j=X_j[1] \in (\mod A)[1]$ for any $j \in J$. 
Let $i \in \{1,\ldots,n\} \setminus J$.
We take the triangle $X_i[-1] \to W_i \to Y_i \to X_i$ 
in Theorem \ref{Thm_smc_mut}.
For any $l \in \Z \setminus \{-1,0\}$,
we have $H^l(W_i)=0$ and $H^l(X_i)=0$ by $X \in \twosmc A$,
so $H^l(Y_i)=0$ also holds.
Thus $Y \in \twosmc A$.

(c)$\Rightarrow$(d)
By Proposition \ref{Prop_sbrick} (2), we have $\calF_S=\sfF(X^-)$.
On the other hand, we get $X_j^-=0$ for all $j \in J$ 
by (c) and Proposition \ref{Prop_stalk}.
Thus $X^-=\bigoplus_{i \notin J} X_i^-$ holds, 
and it is a submodule of 
$\bigoplus_{i \notin J} H^{-1}(\nu S_i)=H^{-1}(\nu U) \in \calF_U$
by Proposition \ref{Prop_endotop}.
Therefore $\calF_S=\sfF(X^-) \subset \calF_U$.
The converse $\calF_U \subset \calF_S$ is obvious.

(d)$\Rightarrow$(c)
Let $j \in J$. 
We have $H^{-1}(\nu S_j) \in \calF_S=\calF_U=\Sub H^{-1}(\nu U)$,
where (d) is used for $\calF_S=\calF_U$.
In particular, there exists a monomorphism 
$H^{-1}(\nu S_j) \to (H^{-1}(\nu U))^{\oplus s}$, 
which is in the radical by $U=\bigoplus_{i \notin J}U_i$.
Thus Proposition \ref{Prop_endotop} implies $X_j^-=0$,
and then Proposition \ref{Prop_stalk} give (c).

Now the equivalence of (a)--(d) is shown.
It remains to show the last statements.
We only check the first half.
Proposition \ref{Prop_sbrick} (2) gives
$\sfT(Y)=\ovcalT_T$ and $\ovcalT_S=\sfT(X)$.
By (d) and its dual, $\calT_T=\calT_U$ and $\ovcalT_U=\ovcalT_S$ hold.
Moreover Definition-Lemma \ref{Def-Lem_ftors} implies 
$\calT_U \subset \ovcalT_U$. 
\end{proof}

We remark that this theorem for the case $\#J=1$ follows from
\cite[Proposition 2.13, Theorem 3.12]{A-semi} and
\cite[Definition-Proposition 2.28, Corollary 3.8]{AIR},
which are fundamental to define the brick labeling
of the exchange quiver $Q(\twosilt A)$ in Subsection \ref{Subsec_exch}.

Next we observe how a mutation within $\twosmc A$  
changes the sizes of the left finite semibricks.

\begin{Prop}\label{Prop_X^-_Y^+}
Let $X=\bigoplus_{i=1}^n X_i \in \twosmc A$
and $J \subset \{ i \in \{1,\ldots,n\} \mid X_i^+ \ne 0\}$.
Then $Y=\bigoplus_{i=1}^n Y_i:=\mu_J^-(X) \in \twosmc A$ holds,
and it satisfies the following properties.
\begin{enumerate}
\item
For each $i \in \{1,\ldots,n\} \setminus J$,
at least one of $Y_i^+ \ne 0$ and $X_i^- \ne 0$ holds.
\item
For each $j \in J$, we have $X_i^+=Y_i^- \ne 0$ and $X_i^-=Y_i^+=0$.
\item
We have $|Y^+| \ge |X^+|-\#J$.
\end{enumerate}
\end{Prop}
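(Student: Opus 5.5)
We first settle that $Y$ lies in $\twosmc A$. By Proposition \ref{Prop_silt_smc_2} there is $S\in\twosilt A$ with $X=\SH(S)$. The hypothesis on $J$ is exactly condition (c) of Theorem \ref{Thm_within}, so $Y=\mu_J^-(X)=\SH(\mu_J^-(S))\in\twosmc A$, using Theorem \ref{Thm_smc_mut}.

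Part (2) is immediate from the definition. For $j\in J$ we have $Y_j=X_j[1]$. Since $X_j^+\neq 0$, Proposition \ref{Prop_stalk} and Lemma \ref{Lem_LF_size} give $X_j=X_j^+\in\mod A$ and $X_j^-=0$. Hence $Y_j=X_j^+[1]$, so $Y_j^-=X_j^+\neq0$ and $Y_j^+=0$.

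For (1), fix $i\notin J$ and use the triangle
\[
X_i[-1]\xrightarrow{g_i}W_i\to Y_i\to X_i
\]
from Theorem \ref{Thm_smc_mut}. Here $W_i\in\calW_J=\Filt\{X_j\mid j\in J\}$, and each such $X_j$ is a module, so $W_i\in\mod A$ because $\mod A$ is extension-closed in $\sfD(A)$. Suppose $X_i^-=0$; then $X_i=X_i^+\in\mod A$. The triangle gives the long exact cohomology sequence
\[
0\to H^{-1}(W_i)\to H^{-1}(Y_i)\to H^{-1}(X_i)\to H^0(W_i)\to H^0(Y_i)\to H^0(X_i)\to H^1(W_i)=0 .
\]
The first term vanishes because $W_i$ is a module, and $H^{-1}(X_i)=0$ by assumption, so $H^{-1}(Y_i)=0$. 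As $Y_i\neq0$ (it is a summand of an smc) and $Y_i$ is a stalk complex, we get $Y_i^+=H^0(Y_i)\neq0$. More directly, the surjection $H^0(Y_i)\to X_i\neq0$ already shows $Y_i^+\neq 0$. This proves (1).

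For (3), split $\{1,\dots,n\}\setminus J$ into those $i$ with $X_i^+\neq0$ and those with $X_i^-\neq0$. By (1), every $i\notin J$ with $X_i^-=0$, that is with $X_i^+\ne0$, satisfies $Y_i^+\neq0$. Since $J$ consists of indices with $X_j^+\neq0$, Lemma \ref{Lem_LF_size} (2) gives
\[
|Y^+|\ \ge\ \#\{i\notin J\mid X_i^+\neq0\}=|X^+|-\#J .
\]

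The only genuinely non-formal input is the well-definedness of $Y$ in $\twosmc A$, which comes from Theorem \ref{Thm_within}. Beyond that, the step needing care is the cohomology argument in (1): it relies on $W_i$ being a module, which holds precisely because $J$ contains only indices with $X_j^+\neq0$.
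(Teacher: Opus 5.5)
Your proof is correct and follows the paper's route: membership in $\twosmc A$ comes from Theorem \ref{Thm_within}, part (2) from $Y_j=X_j[1]$, part (1) from the triangle $X_i[-1]\to W_i\to Y_i\to X_i$ with $W_i\in\calW_J\subset\mod A$, and (3) from the same counting. The only local difference is in (1): you read off $Y_i^+\neq 0$ from the surjection $H^0(Y_i)\to H^0(X_i)$ (since $H^1(W_i)=0$), whereas the paper argues by contradiction that $\Hom_{\sfD(A)}(Y_i,X_i)=0$ would split the triangle and force $Y_i\in\mod A$; both arguments are equally elementary.
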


\begin{proof}
First, $Y \in \twosmc A$ is assured by Theorem \ref{Thm_within}.

(1)
We take the triangle $X_i[-1] \to W_i \to Y_i \to X_i$
with the first map a minimal left $\calW_J$-approximation.
If both $Y_i^+=0$ and $X_i^-=0$ hold,
then we have $Y_i \in \mod A[1]$ and $X_i \in \mod A$,
so $\Hom_A(Y_i,X_i)=0$.
Thus the triangle splits, and we get $W_i \simeq X_i[1] \oplus Y_i$.
Since $W_i \in \calW_J \subset \mod A$ by the assumption on $J$, 
we get $Y_i \in \mod A$, which contradicts $Y_i^+=0$.

(2) is clear, because $X_j,Y_j \in (\mod A) \cup (\mod A)[1]$ 
and $Y_j=X_j[1]$ imply $X_j \in \mod A$ and $Y_j \in (\mod A)[1]$.

(3)
We set $I_X:=\{i \in \{1,\ldots,n\} \mid X_i^+ \ne 0\}$
and $I_Y:=\{i \in \{1,\ldots,n\} \mid Y_i^+ \ne 0\}$.
By (1), we have $I_Y \setminus J \supset I_X \setminus J$.
Thus we get 
$\#I_Y \ge \#(I_Y \setminus J) \ge \#(I_X \setminus J) \ge \#I_X-\#J$.
This and Lemma \ref{Lem_LF_size} (2) give $|Y^+| \ge |X^+|-\#J$.
\end{proof}

The inequality $|Y^+| \le |X^-|+\#J$ does not hold in general.  

\begin{Ex}
Let $A$ be the path algebra of the quiver
\begin{tikzpicture}[->,baseline=0pt,scale=0.5]
\node (1) at ( 0, 1) {$1$};
\node (2) at (-1,-1) {$2$};
\node (3) at ( 0,-1) {$3$};
\node (4) at ( 1,-1) {$4$};
\draw (1) to (2);
\draw (1) to (3);
\draw (1) to (4);
\end{tikzpicture}.
One can check that
\begin{align*}
X:=\begin{smallmatrix}
&1&\\2&3&4
\end{smallmatrix} \oplus \begin{smallmatrix}
2
\end{smallmatrix}[1] \oplus \begin{smallmatrix}
3
\end{smallmatrix}[1] \oplus \begin{smallmatrix}
4
\end{smallmatrix}[1] \in \twosmc A.
\end{align*}
Clearly, we get $|X^+|=1$.
Its left mutation $Y$ at $\begin{smallmatrix}
&1&\\2&3&4
\end{smallmatrix}$ (hence $\#J=1$) is 
\begin{align*}
Y=\begin{smallmatrix}
&1&\\2&3&4
\end{smallmatrix}[1] \oplus \begin{smallmatrix}
&1&\\&3&4
\end{smallmatrix} \oplus \begin{smallmatrix}
&1&\\2&&4
\end{smallmatrix} \oplus \begin{smallmatrix}
&1&\\2&3&
\end{smallmatrix}.
\end{align*}
Thus we get $|Y^+|=3$,
so $|Y^+| \ge |X^+|-\#J$ is true, but $|Y^+| \le |X^+|+\#J$ fails.
\end{Ex}

As an application of mutations within $\twosmc A$,
one can show the following property on maximal semibricks.
Here, we say that a semibrick $M$ is \emph{maximal}
if it is not a proper direct summand of another semibrick.

\begin{Prop}\label{Prop_LF_RF_max}
(cf.~\cite[Proposition 5.6, Corollary 5.7]{BDH})
Let $M \in \fRsbrick A$.
Take the unique $Y=\bigoplus_{i=1}^n Y_i \in \twosmc A$ such that $Y^-=M$,
and set $J:=\{i \in \{1,\ldots,n\} \mid Y_i^- \ne 0\}$.
Then $X:=\mu_J^+(Y) \in \twosmc A$ holds,
and $X^+$ is a left finite, right finite and maximal semibrick
which contains $M$ as a direct summand.
\end{Prop}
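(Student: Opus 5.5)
We apply the dual of Theorem \ref{Thm_within} to $Y$ and the subset $J$, and then read off the required properties of $X^+$ from the dual of Proposition \ref{Prop_X^-_Y^+} and from Proposition \ref{Prop_sbrick}.

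\textbf{Step 1: $X:=\mu_J^+(Y)$ is 2-term.} Let $Y=\SH(T)$ with $T \in \twosilt A$. The dual of Theorem \ref{Thm_within} says that $\mu_J^+(T)$ is 2-term if and only if $Y_j^- \ne 0$ for every $j \in J$, i.e.\ if and only if each $Y_j$ lies in $(\mod A)[1]$. This holds by the definition of $J$. The dual argument for (c)$\Rightarrow$(b) shows directly that $X \in \twosmc A$. Indeed, $X_j=Y_j[-1]=Y_j^- \in \mod A$ for $j \in J$. For $i \notin J$, the triangle $Y_i \to X_i \to W_i \to Y_i[1]$ with $W_i \in \calW_J \subset (\mod A)[1]$ keeps $X_i$ inside cohomological degrees $\{-1,0\}$. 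By Theorem \ref{Thm_smc_mut}, $\mu_J^+(T)$ is then in $\twosilt A$ and $\SH(\mu_J^+(T))=X$.

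\textbf{Step 2: $M$ is a summand of $X^+$, and $X^+$ is left and right finite.} For $j \in J$ we get $X_j^+=Y_j^-$. Hence $X^+ \supset \bigoplus_{j \in J} Y_j^- = Y^- = M$, where the middle equality holds because $Y_i^-=0$ for $i \notin J$. Left finiteness of $X^+$ is Proposition \ref{Prop_sbrick}(1).

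For right finiteness, we show $X^-=0$. For $i \notin J$ we have $Y_i \in \mod A$, since $Y_i^-=0$. The dual of Proposition \ref{Prop_X^-_Y^+}(1) says that at least one of $X_i^- \ne 0$ and $Y_i^+ \ne 0$ holds; this is not enough, so we argue directly. Take the triangle $Y_i \to X_i \to W_i \to Y_i[1]$ with $W_i \in (\mod A)[1]$ and $Y_i \in \mod A$. Taking $H^{-1}$ gives the exact sequence $0=H^{-1}(Y_i) \to H^{-1}(X_i) \to H^{-1}(W_i) \to H^0(Y_i)$.

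So $X_i^-$ embeds into $W_i^- := H^{-1}(W_i) \in \Filt\{Y_j^- \mid j \in J\}$. Now suppose $X_i^- \ne 0$. Then $X_i=X_i^-[1]$. The inclusion of $X_i^-$ into $W_i^-$ gives a nonzero morphism $X_i \to W_i$, and composing with a map to a factor of $W_i$ produces a nonzero morphism from $X_i$ to some $Y_j$ with $j \in J$. On the other hand, the Hom-vanishing of Lemma \ref{Lem_ortho_0_1} (dual form) gives $\Hom(\calW_J,X_i)=0=\Hom(\calW_J[-1],X_i)$, i.e.\ it controls maps into $X_i$, not out of it. So the nonvanishing above is not yet a contradiction. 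A cleaner route is needed, and this is the step I expect to be the main obstacle.

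The route I would try first is the torsion-class dual of the final statement of Theorem \ref{Thm_within}. It gives $\sfF(X^-)=\calF_{\mu_J^+(T)}=\calF_U$, where $U=T/T_J$. Since every $i \notin J$ has $Y_i \in \mod A$, the dual of condition (d) should force $\calF_U=\calF_T$. Then $\sfF(X^-)=\sfF(Y^-)=\sfF(M)$, and comparing sizes and summands should yield $X^-=0$, hence $X^+=X$.

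However, Lemma \ref{Lem_X^+=X} shows that $X^+=X$ forces $X$ to be the direct sum of all simple modules, which is wrong in general. So $X^-$ need not vanish, and right finiteness must come from elsewhere. The better approach: $\sfF(X^+)$ is functorially finite because it corresponds to the right mutation in the opposite order, i.e.\ one uses $X^+=Y'^-$ for some $Y' \in \twosmc A$. I would aim to show $\sfF(X^+)=\ovcalF_{U}$ using the last display of Theorem \ref{Thm_within} (dualized): $\sfT(X^+)=\ovcalT_U$ and $\sfF(Y^-)=\calF_U$, together with the identification of $X^+$ with the simples of the wide subcategory attached to $U$.

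\textbf{Step 3: maximality.} Suppose $X^+ \oplus N$ is a semibrick with $N \ne 0$ a brick. Then $N \in \sfT(X^+)^\perp$ fails while $\Hom(X^+,N)=0$ holds, so $N \in \ovcalF_U$. Since $N \in \ovcalT_U=\sfT(X^+)$, $N$ admits a nonzero map from $X^+$, a contradiction. So the task reduces to proving $N \in \ovcalT_U$, using $\Hom(N,X^+)=0$, $\Hom(N,M)=0$ and $\ovcalT_U={}^\perp H^{-1}(\nu U)$.
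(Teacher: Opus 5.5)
Your Step 1 and the first part of Step 2 ($X\in\twosmc A$, $M$ a summand of $X^+$, left finiteness) are correct and match the paper. However, neither right finiteness nor maximality is proved, and the routes you sketch for them cannot work.

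\emph{Right finiteness.} Put $S:=\mu_J^+(T)$. Lemma \ref{Lem_S_T_common} shows that $S$ and $T$ are the maximal and minimal completions of $U$. Hence $\calF_U=\calF_S=\sfF(X^-)$, while $\ovcalF_U=\ovcalF_T=\sfF(Y^-)=\sfF(M)$; you wrote $\sfF(Y^-)=\calF_U$ instead. So your target $\sfF(X^+)=\ovcalF_U$ says $\sfF(X^+)=\sfF(M)$. This fails whenever $X^+\ne M$: a brick summand $N$ of $X^+$ not in $\add M$ has $\Hom_A(N,M)=0$, so $N\notin\sfF(M)$. Already for $M=0$ (and $A$ basic) one gets $J=\emptyset$, $X=Y=\bigoplus_i L(i)$ and $U=T=A$, so $\ovcalF_A=\{0\}\ne\mod A=\sfF(X^+)$. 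Likewise, only the summands $X_j^+$ with $j\in J$, i.e.\ $M$, are the simples of $\calW_U$ (Theorem \ref{Thm_sim_W_U}), not all of $X^+$.

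\emph{Maximality.} Your Step 3 is circular. The hypothesis $\Hom_A(X^+,N)=0$ already gives $N\in(X^+)^\perp=\sfF(X^-)=\calF_U=\Sub H^{-1}(\nu U)$. (So ``$N\in\sfT(X^+)^\perp$ fails'' should read ``holds''.) Then any $N\ne0$ has $\Hom_A(N,H^{-1}(\nu U))\ne0$, so ``$N\in\ovcalT_U$'' is equivalent to $N=0$, which is exactly what you need to prove.

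\emph{The missing idea.} Your guess $X^+=Y'^-$ is right, but it has to be realized with the explicit choice $Y':=Z:=\mu_{J'}^-(X)$, where $J':=\{i\mid X_i^+\ne0\}\supset J$. This $Z$ lies in $\twosmc A$ by Theorem \ref{Thm_within}. The key claim is $Z_i\in\mod A$ for every $i\notin J'$, which gives $Z^-=X^+$. Here $\calW_J:=\Filt\{X_j\mid j\in J\}\subset\mod A$, and $Y=\mu_J^-(X)$ by Definition-Proposition \ref{Def-Prop_smc_mut}(3). The argument goes as follows.
\begin{enumerate}
\item For $i\notin J'$ we have $X_i=X_i^-[1]$, and since $i\notin J$ also $Y_i\in\mod A$.
\item So the triangle $X_i[-1]\xrightarrow{\phi}W_i\to Y_i\to X_i$ defining $Y$ is a short exact sequence in $\mod A$, and $\phi$ is injective.
\item Since $W_i\in\calW_J\subset\calW_{J'}$, $\phi$ factors through the minimal left $\calW_{J'}$-approximation $\phi'\colon X_i[-1]\to W'_i$. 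Hence $\phi'$ is injective, and $Z_i$ is its cokernel, a module.
\end{enumerate}
Then $X^+=Z^-$ is right finite by Proposition \ref{Prop_sbrick}. Maximality follows from $(X^+)^\perp=\sfF(X^-)\subset\sfF(Z^-)=\sfF(X^+)$, where the inclusion is the last display of Theorem \ref{Thm_within} applied to $Z=\mu_{J'}^-(X)$. A brick $N$ with $X^+\oplus N$ a semibrick would then be a nonzero object of $\sfF(X^+)$ with $\Hom_A(N,X^+)=0$, which is impossible.
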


\begin{proof}
Though the original argument in \cite{BDH} works,
we rewrite it in our setting for the convenience of the reader.

By Theorem \ref{Thm_within} and the definition of $J$,
we have $X=\mu_J^+(Y) \in \twosmc A$.
Then $X^+$ is a left finite semibrick by Proposition \ref{Prop_sbrick}.
By Theorem \ref{Thm_smc_mut}, $M=Y^-$ is a direct summand of $X^+$.

We set $J':=\{i \in \{1,\ldots,n\} \mid X_i^+ \ne 0\}$.
Consider the left mutation $Z:=\mu_{J'}^-(X)$,
which is in $\twosmc A$ again by Theorem \ref{Thm_within}.
Set $J'':=\{i \in \{1,\ldots,n\} \mid Z_i^- \ne 0\}$.
Immediately, we have $J \subset J' \subset J''$ by Theorem \ref{Thm_smc_mut}.
We claim $J'=J''$, so let $i \in J'' \setminus J'$.
 
Since $i \notin J$ and $Y=\mu_J^-(X)$, 
there exists a triangle $X_i[-1] \xrightarrow{\phi} W_i \to Y_i \to X_i$
in $\sfD(A)$ with $\phi$ a minimal left $\calW_J$-approximation,
where $\calW_J:=\Filt \{X_i \mid i \in J\} \subset \calD(A)$.
Similarly, since $i \notin J'$ and $Z=\mu_{J'}^-(X)$, we take a triangle 
$X_i[-1] \xrightarrow{\phi'} W'_i \to Z_i \to X_i$
in $\sfD(A)$ with $\phi'$ a minimal left $\calW_{J'}$-approximation
with $\calW_{J'}:=\Filt \{X_i \mid i \in J'\}$.

By $J \subset J'$ and the definition of $J'$, 
we have $\calW_J \subset \calW_{J'} \subset \mod A$.
Moreover $i \notin J'$ implies $X_i \in (\mod A)[1]$.
Thus $\phi \colon X_i[-1] \to W_i$ and $\phi' \colon X_i[-1] \to W'_i$
are just morphisms in $\mod A$.
In addition, $i \notin J$ and $i \in J''$ imply 
$Y_i \in \mod A$ and $Z_i \in (\mod A)[1]$.
Thus we have short exact sequences
$0 \to X_i[-1] \xrightarrow{\phi} W_i \to Y_i \to 0$ and
$0 \to Z_i[-1] \to X_i[-1] \xrightarrow{\phi'} W'_i \to 0$ in $\mod A$. 
Since $\phi'$ is a minimal left $\calW_{J'}$-approximation
and $W_i \in \calW_J \subset \calW_{J'}$,
there exists $\psi \colon W'_i \to W_i$ such that $\phi=\psi\phi'$.
Since $\phi$ is injective, so is $\phi'$.
Thus $\phi'$ is isomorphic, a contradiction to $Z_i[-1] \ne 0$.

Therefore we have $J'=J''$.
This gives $X^+=Z^-$, so $X^+$ is right finite.

We finally show that $X^+$ is maximal. 
By the definition of semibricks,
it is enough to prove $(X^+)^\perp \subset \sfF(X^+)$.
By Proposition \ref{Prop_sbrick}, 
we have $(\sfT(X^+),\sfF(X^-))$ is a torsion pair,
which implies $(X^+)^\perp=\sfF(X^-)$.
By Theorem \ref{Thm_within} and $Z=\mu_{J'}^-(X)$, 
we get $\sfF(X^-) \subset \sfF(Z^-)$.
Clearly, $Z^-=X^+$ gives $\sfF(Z^-)=\sfF(X^+)$.
Therefore $(X^+)^\perp \subset \sfF(X^+)$ holds as desired.
\end{proof}

We remark that the original results in \cite{BDH}
were stated and proved in the context of the pop-stack operators 
on the lattice $\tors A$ of torsion classes in the case $A$ is brick finite.
In general, $\ftors A$ is not a sublattice of $\tors A$,
but the pop-stack operators are restricted on $\ftors A$,
since they are compatible with certain mutations in $\twosmc A$ 
by the same argument as \cite[Theorem 5.1]{BDH}.
In our proof above, $\sfT(X^+)$ is the pop of $\sfT(Y^+)$,
and $\sfT(Z^+)$ is the stack of $\sfT(X^+)$.
A more combinatorial proof in the finite case is available 
at \cite[Remark 9.9]{DW}.

As a direct consequence, we have the following result.

\begin{Cor}\label{Cor_LF_RF_summand}
Let $M$ be a semibrick.
Then the following conditions are equivalent.
\begin{enumerate2}
\item
There exists $N \in \fLsbrick A$ such that $M \in \add N$.
\item
There exists $N \in \fRsbrick A$ such that $M \in \add N$.
\item
There exists $N \in \fLsbrick A \cap \fRsbrick A$ such that $M \in \add N$.
\end{enumerate2}
\end{Cor}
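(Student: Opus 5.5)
The plan is to prove the cycle (c)$\Rightarrow$(a), (c)$\Rightarrow$(b), (a)$\Rightarrow$(c) and (b)$\Rightarrow$(c). Both implications out of (c) are trivial, since $\fLsbrick A \cap \fRsbrick A$ is contained in each of $\fLsbrick A$ and $\fRsbrick A$. So all the work is in (b)$\Rightarrow$(c), and (a)$\Rightarrow$(c) will follow from it by duality.

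For (b)$\Rightarrow$(c), suppose $M \in \add N$ with $N \in \fRsbrick A$. Apply Proposition \ref{Prop_LF_RF_max} to $N$. This gives $Y \in \twosmc A$ with $Y^-=N$, and with $J$ as defined there, $X:=\mu_J^+(Y) \in \twosmc A$. The proposition says that $X^+$ is left finite, right finite, and contains $N$ as a direct summand. Since $M \in \add N$ and $N \in \add X^+$, we get $M \in \add X^+$. Hence $X^+$ serves as the $N$ required in (c).

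For (a)$\Rightarrow$(c), I will use the dual of Proposition \ref{Prop_LF_RF_max}. The intended statement is: for $N \in \fLsbrick A$, take the unique $X \in \twosmc A$ with $X^+=N$, and let $J$ be the set of $i$ with $X_i^+ \ne 0$. Then $Z:=\mu_J^-(X)$ lies in $\twosmc A$ by Theorem \ref{Thm_within}, and $Z^-$ is a left finite, right finite, maximal semibrick containing $N$.

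There are two ways to justify this dual statement.
\begin{itemize}
\item Rerun the proof of Proposition \ref{Prop_LF_RF_max} with left and right interchanged.
\item Pass to the opposite algebra via the duality $D=\Hom_K(?,K)$. This exchanges $\mod A$ and $\mod A^{\mathrm{op}}$ and sends semibricks to semibricks. It also sends torsion classes to torsion-free classes and preserves functorial finiteness, so it exchanges left finite and right finite semibricks. Applying Proposition \ref{Prop_LF_RF_max} over $A^{\mathrm{op}}$ to $DN \in \fRsbrick A^{\mathrm{op}}$ then gives a semibrick $N'$ over $A^{\mathrm{op}}$ that is left and right finite and has $DN$ as a summand. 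Then $DN'$ works over $A$.
\end{itemize}

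The only step needing care is this duality check: that $D$ interchanges $\sfT(M)$ and $\sfF(DM)$ and preserves functorial finiteness. This is standard, so I expect no real obstacle; the main content is already in Proposition \ref{Prop_LF_RF_max}.
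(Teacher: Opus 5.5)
Your proof is correct and follows the paper's argument. The paper likewise treats (c)$\Rightarrow$(a),(b) as trivial, obtains (b)$\Rightarrow$(c) by applying Proposition \ref{Prop_LF_RF_max} to a right finite semibrick containing $M$, and handles (a)$\Rightarrow$(c) by symmetry. Your explicit justification of that symmetry, via $D=\Hom_K(?,K)$ and $A^{\mathrm{op}}$ or by rerunning the proof with left and right exchanged, is a valid way to make that step precise.
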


\begin{proof}
By symmetry, we only show (b)$\Leftrightarrow$(c).
(c)$\Rightarrow$(b) is obvious.
To show (b)$\Rightarrow$(c),
we may assume that $M$ itself is right finite.
Then Proposition \ref{Prop_LF_RF_max} gives a semibrick in (c).
\end{proof}

\subsection{The exchange quiver}\label{Subsec_exch}

In this subsection, 
we only consider irreducible mutations within $\twosilt A$ and $\twosmc A$.
As in \cite{AIR} (cf.~\cite[Proposition 5.7]{DF}),
for each indecomposable direct summand $S_j$ of $S \in \twosilt A$,
exactly one of $\mu_j^-(S)$ and $\mu_j^+(S)$ is in $\twosilt A$,
and which is $\twosilt A$ is determined 
by inclusions of the corresponding torsion classes.
We add some additional information on this.

\begin{Prop}\label{Prop_two_compl}
\cite[Definition-Proposition 2.28, Corollary 3.8]{AIR}
Let $S=\bigoplus_{i=1}^n S_i \in \twosilt A$,
and $j \in \{1,\ldots,m\}$.
\begin{enumerate}
\item
There uniquely exists $T=\bigoplus_{i=1}^n T_i \in \twosilt A$ 
such that $S_j \not \simeq T_j$ and $S_i=T_i$ for all $i \ne j$.
\item
In (1), exactly one of the following is true,
where $X=\bigoplus_{i=1}^n X_i := \SH(S) \in \twosmc A$.
\begin{enumerate2}
\item
The conditions
$\calT_T=\calT_{S/S_j} \subsetneq \calT_S$, 
$\calF_T \supsetneq \calF_{S/S_j}=\calF_S$,
$T=\mu_j^-(S)$ and $X_j^+ \ne 0$ hold.
\item
The conditions
$\calT_T \supsetneq \calT_{S/S_j}=\calT_S$,
$\calF_T=\calF_{S/S_j} \subsetneq \calF_S$, 
$T=\mu_j^+(S)$ and $X_j^- \ne 0$ hold.
\end{enumerate2} 
\end{enumerate}
\end{Prop}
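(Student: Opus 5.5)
The plan is to derive everything from Theorem \ref{Thm_within} with $J=\{j\}$, together with its dual for right mutations, Proposition \ref{Prop_compl}, and the bijections in Proposition \ref{Prop_silt_tors}. By Proposition \ref{Prop_stalk}, exactly one of $X_j^+\neq 0$ and $X_j^-\neq 0$ holds. In the first case, Theorem \ref{Thm_within} with $J=\{j\}$ shows that $\mu_j^-(S)\in\twosilt A$. In the second case, the dual statement gives $\mu_j^+(S)\in\twosilt A$. The dual is proved by the same argument: the condition ``$X_j^+\neq 0$ for all $j\in J$'' is replaced by ``$X_j^-\neq 0$'', since $\mu_J^+(X)$ has $Z_j=X_j[-1]$. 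In either case we obtain a 2-term silting complex $T$ that agrees with $S$ away from $j$. Moreover $T_j\not\simeq S_j$: the Proposition following Definition-Proposition \ref{Def-Prop_silt_mut} says that $S/S_j$ is the maximal common summand of $S$ and its mutation.

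Uniqueness in (1) goes through torsion classes. Set $U:=S/S_j$. Any $T'\in\twosilt A$ containing $U$ satisfies $\calT_U\subset\calT_{T'}\subset\ovcalT_U$, since $\Fac H^0(U)\subset\Fac H^0(T')$ and $\ovcalT_{T'}\subset\ovcalT_U$. This interval contains both $\calT_S$ and $\calT_{\mu(S)}$. To finish I would cite the standard fact from \cite{AIR} that an almost complete 2-term presilting complex has exactly two completions. If I had to argue it directly, I would use Theorem \ref{Thm_Sec4_intro}: since $|U|=n-1$, the wide subcategory $\calW_U$ has exactly one simple object, so $\mod B$ has rank one and $\twosilt B$ has two elements. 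Jasso reduction (Proposition \ref{Prop_reduc}) then gives exactly two completions. This step, the ``exactly two completions'' claim, is the main obstacle if one insists on not citing \cite{AIR}. Given it, uniqueness follows: the completions are $S$ and the mutation constructed above, so that mutation is the $T$ of (1).

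For (2), suppose first that $X_j^+\neq 0$, so $T=\mu_j^-(S)$. The last display of Theorem \ref{Thm_within} gives $\calT_T=\calT_U$ and $\calF_S=\calF_U$. Condition (d) of that theorem gives $\ovcalT_S=\ovcalT_U$, and Proposition \ref{Prop_silt_tors}(2),(3) gives $\calT_S=\ovcalT_S$, which yields the inclusion $\calT_T\subset\calT_S$. The inclusion is strict because $S\neq T$ and $S\mapsto\calT_S$ is a bijection. Likewise $\calF_T=\ovcalF_T=\ovcalF_U\supset\calF_U=\calF_S$, again strictly. This is alternative (a). Alternative (b) is the exact dual, for the case $X_j^-\neq 0$.

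Finally, (a) and (b) cannot both hold. They require $X_j^+\neq 0$ and $X_j^-\neq 0$ respectively, and these are mutually exclusive by Lemma \ref{Lem_LF_size}(1). Equivalently, they require $\calT_T\subsetneq\calT_S$ and $\calT_T\supsetneq\calT_S$, which are incompatible.
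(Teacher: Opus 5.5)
Your argument is correct, but it runs in the opposite direction from the paper's.

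The paper takes both (1) and the whole torsion-class dichotomy of (2) from \cite{AIR}, and adds only the new identifications. In case (a), $\calF_S=\calF_{S/S_j}$ is condition (d) of Theorem \ref{Thm_within}, so (d)$\Rightarrow$(a),(c) yields $\mu_j^-(S)\in\twosilt A$ and $X_j^+\neq 0$, and uniqueness in (1) forces $T=\mu_j^-(S)$. Case (b) is dual.

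You instead start from the brick dichotomy of Proposition \ref{Prop_stalk} and construct $T$ via (c)$\Rightarrow$(a). You borrow from \cite{AIR} only the fact that $S/S_j$ has exactly two completions. You then recover the strict inclusions of torsion and torsion-free classes from the closing display of Theorem \ref{Thm_within} and the bijectivity of $S\mapsto\calT_S$ and $S\mapsto\calF_S$. The paper's route is shorter. Yours depends less on \cite{AIR} and makes ``exactly one'' transparent, since it reduces to the fact that $X_j^+$ and $X_j^-$ are not both nonzero.

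Two small points.

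First, in the dual case you also need $\SH(\mu_j^+(S))=\mu_j^+(X)$. This follows by applying Theorem \ref{Thm_smc_mut}(2) to $\mu_j^+(S)$, using $\mu_j^-(\mu_j^+(S))=S$ and Definition-Proposition \ref{Def-Prop_smc_mut}(3). This is the same kind of dualization the paper leaves implicit.

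Second, your fallback argument for uniqueness should not go through Theorem \ref{Thm_Sec4_intro}. In the paper, Theorem \ref{Thm_sim_W_U} relies on Example \ref{Ex_A_source}, both directly and via Proposition \ref{Prop_reduc_source}, and that example invokes the very proposition you are proving. Instead, take $|B|=|A|-|U|=1$ directly from Proposition \ref{Prop_reduc}(2). Then $\mod B$ has only the torsion classes $\{0\}$ and $\mod B$, so $\#\twosilt B=2$. The bijection $\reduc\colon\twosilt_U A\to\twosilt B$ then gives exactly two completions.
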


\begin{proof}
In view of the cited results, it remains to show
$T=\mu_j^-(S)$, $X_j^+ \ne 0$ in (2)(a)
and $T=\mu_j^+(S)$, $X_j^- \ne 0$ in (2)(b).
In (2)(a), 
$\calF_S=\calF_{S/S_j}$ implies $H^{-1}(\nu S_j) \in \calF_{S/S_j}$,
so we get $\mu_j^-(S) \in \twosilt A$ and $X_j^+ \ne 0$
by Theorem \ref{Thm_within},
and then the uniqueness in (1) gives $T=\mu_j^-(S)$.
Similar argument works in (b).
\end{proof}

Irreducible mutations of 2-term silting complexes define a quiver.
The arrows of the quiver are labeled by bricks as follows.

\begin{Def}\label{Def_label}
We define the following notions.
\begin{enumerate}
\item
The \emph{exchange quiver} $Q(\twosilt A)$ of $\twosilt A$ is set as 
the following quiver.
\begin{enumerate}
\item
The vertices are elements of $\twosilt A$.
\item
For $S,T \in \twosilt A$,
there is an arrow if and only if 
$T$ is a left irreducible mutation of $S$.
\end{enumerate}
\item
\cite[Definitions 2.14, 3.10, Theorem 3.12]{A-semi}
Let $\alpha \colon S \to T$ be an arrow in $Q(\twosilt A)$.
Decompose $S=\bigoplus_{i=1}^n S_i$ with $S_i$ indecomposable,
and take the unique $j \in \{1,\ldots,n\}$ such that $T=\mu_j^-(S)$.
Then we call the brick $Z_\alpha:=X_j^+$ 
the \emph{label} of the arrow $S \to T$.
This association $\alpha \mapsto Z_\alpha$ is called 
the \emph{brick labeling} of $Q(\twosilt A)$.
\end{enumerate}
\end{Def}

By Proposition \ref{Prop_two_compl}, 
the exchange quiver $Q(\twosilt A)$ is \emph{$n$-regular};
that is, each vertex has exactly $n$ arrows starting or ending at it.
Moreover, the following properties hold by definition.

\begin{Prop}\label{Prop_smc_label}
Let $S \in \twosilt A$ and $X:=\SH(S) \in \twosmc A$.
\begin{enumerate}
\item
The object $X^+$ is the direct sum of 
the labels of the arrows starting at $S$ in $Q(\twosilt A)$.
\item
The object $X^-$ is the direct sum of 
the labels of the arrows ending at $S$ in $Q(\twosilt A)$.
\end{enumerate}
\end{Prop}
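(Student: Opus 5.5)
The plan is to read off both statements from the definition of the brick labeling (Definition \ref{Def_label}), using Proposition \ref{Prop_two_compl} to sort every index $j \in \{1,\ldots,n\}$ into exactly one of two cases. Write $S=\bigoplus_{i=1}^n S_i$ and $X=\SH(S)=\bigoplus_{i=1}^n X_i$. For each $j$, Proposition \ref{Prop_two_compl} (2) says that exactly one of $\mu_j^-(S)$ and $\mu_j^+(S)$ lies in $\twosilt A$. In the first case $X_j^+ \ne 0$; in the second case $X_j^- \ne 0$. By Lemma \ref{Lem_LF_size} (1), these two cases correspond exactly to $X_j \in \mod A$ and $X_j \in (\mod A)[1]$.

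For (1), I would first show that the arrows starting at $S$ are exactly the arrows $S \to \mu_j^-(S)$ for those $j$ with $X_j^+ \ne 0$. Distinct $j$ give non-isomorphic targets, since $\mu_j^-(S)$ differs from $S$ exactly in the $j$-th summand. By Definition \ref{Def_label} the label of $S \to \mu_j^-(S)$ is $X_j^+$. Summing over these $j$, and noting that $X_j^+=0$ for the remaining $j$, gives $\bigoplus_{j} X_j^+ = X^+$.

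For (2), the arrows ending at $S$ are the arrows $\mu_j^+(S) \to S$ with $X_j^- \ne 0$. The main point is to compute the label of such an arrow. Set $T:=\mu_j^+(S)$, so that $S=\mu_j^-(T)$ by Definition-Proposition \ref{Def-Prop_silt_mut}. The label is then $Y_j^+$, where $Y:=\SH(T)$. By Theorem \ref{Thm_smc_mut} (2), $\SH(S)=\mu_j^-(Y)$, and hence $X_j=Y_j[1]$ from the definition of $\mu_j^-$ on simple-minded collections. Taking cohomology gives $Y_j^+=H^0(Y_j)=H^{-1}(X_j)=X_j^-$. Summing over the $j$ with $X_j^- \ne 0$ then yields $X^-$.

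The only delicate step is the index bookkeeping in (2). One has to check that the $j$-th summand of $T$ corresponds to the $j$-th summand of $S$ under $\mu_j^-$, so that the index used in Definition \ref{Def_label} for the arrow $T \to S$ is the same $j$. This holds because mutation preserves indices, as stated in Definition-Proposition \ref{Def-Prop_silt_mut} and Theorem \ref{Thm_smc_mut}.
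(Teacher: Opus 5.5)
Your proof is correct and follows the argument the paper intends when it says these properties hold by definition. Proposition \ref{Prop_two_compl} sorts the indices, and Definition \ref{Def_label} gives $X_j^+$ as the labels of the arrows leaving $S$. Theorem \ref{Thm_smc_mut} (2) with $J=\{j\}$ identifies the label $Y_j^+$ of $\mu_j^+(S)\to S$ with $X_j^-$. You have simply made explicit the index bookkeeping that the paper leaves implicit.
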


We give an extreme example.

\begin{Ex}\label{Ex_A_source}
Assume that $A$ is basic with $A=\bigoplus_{i=1}^n P(i)$.
Then $A \in \twosilt A$ holds, and $\sfT(A)=\mod A$.
Thus Proposition \ref{Prop_two_compl} implies that 
the vertex $A$ is a source in $Q(\twosilt A)$.
The arrows starting at $A$ are 
$\alpha_i \colon A \to \mu_i^-(A)$ for $i \in \{1,\ldots,n\}$,
and each $\alpha_i$ is labeled by the simple module $L(i)$,
since $\SH(A)=\bigoplus_{i=1}^n L(i)$ as in Example \ref{Ex_SH(A)}.

Moreover $A$ is the unique source in $Q(\twosilt A)$;
indeed, if $S$ is a source, then $X:=\SH(S)$ satisfies $X^+=X$ and $X^-=0$
by Proposition \ref{Prop_smc_label},
so we get $X=\bigoplus_{i=1}^n L(i)$ by Lemma \ref{Lem_X^+=X} and $S=A$.
\end{Ex}

We remark the second paragraph also follows from \cite[Theorem 3.1]{DIJ}.

Each arrow of $Q(\twosilt A)$ and its label satisfy the following.

\begin{Prop}\label{Prop_label_filt}
\cite[Theorem 2.18]{AIR} \cite[Proposition 2.17]{A-semi}
Assume that $S \to T$ is an arrow in $Q(\twosilt A)$, 
and let $Z \in \brick A$ be its label.
Then the maximal common direct summand $U \in \twopsilt A$ of $S,T$
satisfies
\begin{align*}
|U|=|A|-1, \quad 
(\ovcalT_U,\calF_U)=(\ovcalT_S,\calF_S),\quad 
(\calT_U,\ovcalF_U)=(\calT_T,\ovcalF_T),\quad
\ovcalT_U \cap \ovcalF_U=\Filt Z.
\end{align*}
\end{Prop}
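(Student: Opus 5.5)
The plan is to derive each of the four assertions from results already in the paper, namely Proposition \ref{Prop_two_compl}, Theorem \ref{Thm_within} and Theorem \ref{Thm_smc_mut}. Since the statement is cited from \cite{AIR,A-semi}, it is enough to explain how it fits the present framework.

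Write $T=\mu_j^-(S)$ with $S=\bigoplus_i S_i$ and $X=\SH(S)$. The label is $Z=X_j^+\neq 0$, and $U=S/S_j$ by the proposition on maximal common direct summands after Definition-Proposition \ref{Def-Prop_silt_mut}. This gives $|U|=|A|-1$ immediately.

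For the torsion pairs, apply Theorem \ref{Thm_within} with $J=\{j\}$. Condition (c) holds because $X_j^+\ne0$, so (d) gives $(\ovcalT_U,\calF_U)=(\ovcalT_S,\calF_S)$. The final display of that theorem also gives $\calT_T=\calT_U$ and $\ovcalF_T=\ovcalF_U$, and hence $(\calT_U,\ovcalF_U)=(\calT_T,\ovcalF_T)$.

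The main step is $\ovcalT_U\cap\ovcalF_U=\Filt Z$.

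\emph{Inclusion $\supseteq$.} We have $Z\in\sfT(X^+)=\ovcalT_U$. By Theorem \ref{Thm_smc_mut}, $Y:=\SH(T)$ satisfies $Y_j=X_j[1]$, so $Y_j^-=Z$. Then $Z\in\sfF(Y^-)=\ovcalF_T=\ovcalF_U$ by Proposition \ref{Prop_sbrick}(2). Since $\ovcalT_U$ is a torsion class and $\ovcalF_U$ is a torsion-free class, both are extension closed. Therefore $\Filt Z\subseteq\ovcalT_U\cap\ovcalF_U$.

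\emph{Inclusion $\subseteq$.} Take $M\in\ovcalT_U\cap\ovcalF_U$; I will show $M\in\calH_S$ and that all its composition factors there are $X_j$. First, $M\in\ovcalT_S=\calT_S=\Fac H^0(S)$. A module in $\Fac H^0(S)$ lies in $\calH_S$ for 2-term $S$: $\Hom(S,M[1])=0$ because $M\in\ovcalT_S$, and the other shifts vanish by degree. Second, $M\in\ovcalF_U=H^0(U)^\perp$ forces $\Hom_{\sfD(A)}(U,M)=\Hom_A(H^0(U),M)=0$. By Proposition \ref{Prop_E_S}(2),(3), the $E_S$-module $\Psi_S(M)$ then has no composition factor $\Psi_S(X_i)$ with $i\ne j$. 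So $M$ is filtered by $X_j=Z$ inside $\calH_S$, and since $M$ and $Z$ are modules these filtrations are short exact sequences in $\mod A$.

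The expected obstacle is the identification $\Hom_{\sfD(A)}(U,M)\simeq\Hom_A(H^0(U),M)$ for a 2-term complex $U$ and a module $M$. This is standard: maps from a 2-term complex of projectives to a stalk module in degree $0$ factor through $H^0$. I would verify it directly.
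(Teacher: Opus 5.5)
Your proof is correct, but it is not the paper's route. The paper gives no argument for this proposition; it cites \cite[Theorem 2.18]{AIR} and \cite[Proposition 2.17]{A-semi}. Inside the paper, the analogous statement for an arbitrary $U$ (Theorem \ref{Thm_sim_W_U}) comes from Jasso's $\tau$-tilting reduction $\Phi\colon\calW_U\simeq\mod B$, together with the compatibility of brick labels with $\reduc$ (Proposition \ref{Prop_reduc_label}).

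Your derivation of the first three equalities from Theorem \ref{Thm_within} is fine. Lemma \ref{Lem_S_T_common} packages them more directly as ``$S$ and $T$ are the maximal and minimal completions of $U$''.

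For $\ovcalT_U\cap\ovcalF_U=\Filt Z$ you avoid reduction altogether. You use $\ovcalT_U\cap\ovcalF_U\subset\calT_S\subset\calH_S$, and then $\Hom_{\sfD(A)}(U,-)=0$ removes every composition factor other than $X_j$, because $\Psi_S(S_i)$ is the projective cover of $\Psi_S(X_i)$ (Proposition \ref{Prop_E_S}). This is more elementary and stays inside the Section 2 machinery. It also works verbatim for any $U\in\twopsilt A$ with $S,T$ as in Notation \ref{Nota_S_T}: it gives $\calW_U=\Filt\{X_{m+1}^+,\ldots,X_n^+\}=\calW_J$ without ever introducing $B$. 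What the reduction route buys in exchange is the equivalence with $\mod B$ and the identification $\Phi(X_j^+)=L(j-m)_B$.

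Two steps deserve an explicit line.
\begin{enumerate}
\item The vanishing $\Hom_{\sfD(A)}(S,M[1])=0$ for $M\in\ovcalT_S$ is not stated in the paper.
\begin{itemize}
\item One proof is Serre duality: $\Hom_{\sfD(A)}(S,M[1])\simeq D\Hom_A(M,H^{-1}(\nu S))$, with $D$ the $K$-dual.
\item A direct proof: the triangle $H^{-1}(S)[1]\to S\to H^0(S)\to H^{-1}(S)[2]$ and $\Hom_{\sfD(A)}(S,S[1])=0$ give $\Hom_{\sfD(A)}(S,H^0(S)[1])=0$. This passes to quotients of $H^0(S)^{\oplus s}$, since $\Hom_{\sfD(A)}(S,N[2])=0$ for every module $N$.
\end{itemize}
\item The terms of the filtration of $M$ in $\calH_S$ are modules because $\mod A$ is closed under extensions in $\sfD(A)$. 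Induct upward from $M_1\cong X_j$; it is not enough that $M$ and $Z$ are modules.
\end{enumerate}

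The step you flagged as the obstacle is immediate. A morphism from a 2-term complex of projectives to a module in degree $0$ is a map $U^0\to M$ vanishing on the image of $U^{-1}$, and there are no nonzero homotopies.
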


We remark that the inclusions of torsion classes 
appearing Proposition \ref{Prop_two_compl} are adjacent
in the lattice $\tors A$ of all torsion classes \cite[Example 3.5]{DIJ}.
Thus $Q(\twosilt A)$ is a full subquiver of the Hasse quiver 
$\Hasse(\tors A)$.
Our brick labeling in $Q(\twosilt A)$ is extended to $\Hasse(\tors A)$
in \cite[Definition 2.10]{BCZ} and \cite[Definition 3.5]{DIRRT}; 
compare them to Proposition \ref{Prop_label_filt}.

\section{Maximal/minimal completions and $\tau$-tilting reduction}
\label{Sec_reduc}

\subsection{Maximal/minimal completions}

In this subsection,
we consider the following two completions of each $U \in \twopsilt A$.

\begin{Def-Prop}
Let $U \in \twopsilt A$.
By Proposition \ref{Prop_silt_tors}, 
there uniquely exist $S,T \in \twosilt A$ such that
\begin{align*}
(\ovcalT_U,\calF_U)=(\ovcalT_S,\calF_S), \quad
(\calT_U,\ovcalF_U)=(\calT_T,\ovcalF_T).
\end{align*}
Then $S$ and $T$ are called the \emph{maximal completion}
and the \emph{minimal completion} of $U$.
\end{Def-Prop}

Sometimes $S$ and $T$ are called the \emph{Bongartz completion}
and the \emph{Bongartz co-completion} of $U$.
The reason why we say $S$ and $T$ are maximal and minimal is 
the following property;
note that $\calT_V=\ovcalT_V$ always holds since $V \in \twosilt A$.

\begin{Prop}\cite[Proposition 2.9]{AIR}
Let $U \in \twopsilt A$ and $V \in \twosilt A$.
Then $U \in \add V$ holds if and only if 
$\calT_U \subset \calT_V \subset \ovcalT_U$.
\end{Prop}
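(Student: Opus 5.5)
The forward direction follows directly from the definitions. For the converse, I will show that $U \oplus V$ is presilting and then use maximality of silting complexes. The key tool is a Hom-vanishing criterion for 2-term complexes:
\begin{align*}
\Hom_{\sfK^\rmb(\proj A)}(Q,P[1])=0 \iff \Hom_A(H^0(P),H^{-1}(\nu Q))=0
\end{align*}
for $P,Q$ 2-term complexes in $\sfK^\rmb(\proj A)$.

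To prove the criterion, I will use the Nakayama functor as a Serre functor: $\Hom_{\sfK^\rmb(\proj A)}(Q,P[1]) \simeq D\Hom_{\sfD(A)}(P[1],\nu Q) = D\Hom_{\sfD(A)}(P,\nu Q[-1])$. Here $\nu Q$ is a complex of injectives in degrees $-1,0$, so $\nu Q[-1]$ lives in degrees $0,1$ and has $H^0(\nu Q[-1])=H^{-1}(\nu Q)$. Since $P$ is a complex of projectives in degrees $-1,0$, a morphism $P \to \nu Q[-1]$ is given by a degree-$0$ chain map component $P^0 \to (\nu Q)^{-1}$. It kills the image of $P^{-1}$ and lands in $\Ker((\nu Q)^{-1}\to(\nu Q)^0)=H^{-1}(\nu Q)$. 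Since the two complexes overlap only in degree $0$, there are no nonzero null-homotopies. Hence this Hom space is exactly $\Hom_A(H^0(P),H^{-1}(\nu Q))$, which gives the criterion. I expect this identification to be the main technical point; the rest is formal.

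($\Rightarrow$) Suppose $U \in \add V$. Then $H^0(U) \in \add H^0(V)$, so $\calT_U=\Fac H^0(U) \subset \Fac H^0(V)=\calT_V$. Likewise $H^{-1}(\nu U) \in \add H^{-1}(\nu V)$, so $\calT_V=\ovcalT_V={^\perp H^{-1}(\nu V)} \subset {^\perp H^{-1}(\nu U)}=\ovcalT_U$. Here $\calT_V=\ovcalT_V$ holds by Proposition \ref{Prop_silt_tors} (2).

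($\Leftarrow$) Assume $\calT_U \subset \calT_V \subset \ovcalT_U$. I check both cross terms using the criterion:
\begin{enumerate}
\item $\Hom(U,V[1])=0$ is equivalent to $\Hom_A(H^0(V),H^{-1}(\nu U))=0$. This holds because $H^0(V) \in \calT_V \subset \ovcalT_U={^\perp H^{-1}(\nu U)}$.
\item $\Hom(V,U[1])=0$ is equivalent to $\Hom_A(H^0(U),H^{-1}(\nu V))=0$. This holds because $H^0(U) \in \calT_U \subset \calT_V=\ovcalT_V={^\perp H^{-1}(\nu V)}$.
\end{enumerate}
The terms $\Hom(U,U[l])$ and $\Hom(V,V[l])$ for $l>0$ already vanish, and for $l \ge 2$ everything vanishes because the complexes are 2-term. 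Hence $U \oplus V$, after passing to its basic part, lies in $\twopsilt A$.

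By Proposition \ref{Prop_compl}, there is $W \in \twosilt A$ with $U \oplus V \in \add W$. Then $V \in \add W$ with $|V|=|A|=|W|$, and both are basic, so $W=V$. Therefore $U \in \add V$.
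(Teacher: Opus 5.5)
Your proof is correct, but it takes a different route from the cited result; the paper gives no proof of its own and simply refers to \cite[Proposition 2.9]{AIR}.

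The argument there is module-theoretic. Write $M=H^0(U)$ and $N=H^0(V)$. The inclusion $\Fac N \subset {}^\perp(\tau M)$ gives $\Ext_A^1(M,\Fac N)=0$ by Auslander--Smal\o. Together with $M \in \Fac N$, this makes $M$ Ext-projective in $\Fac N$. One then uses that the Ext-projectives of $\Fac N$ are exactly $\add N$, which is part of the bijection between support $\tau$-tilting modules and functorially finite torsion classes.

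You instead stay in $\sfK^\rmb(\proj A)$. The Serre-duality identification $\Hom(Q,P[1]) \simeq D\Hom_A(H^0(P),H^{-1}(\nu Q))$ turns the two inclusions directly into the two cross vanishings, so $U \oplus V$ is presilting. Proposition~\ref{Prop_compl} and the count $|W|=|A|$ then force $U \in \add V$.

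Your route has one advantage. The summands of $U$ of the form $P[1]$ are treated uniformly: they enter through the summand $\nu P$ of $H^{-1}(\nu U)$, so $\calT_V \subset \ovcalT_U$ encodes both $\Hom_A(N,\tau M)=0$ and $\Hom_A(P,N)=0$ at once. The module version has to handle the projective part separately. The price is that you rely on the completion theorem and the rank count rather than on the description of Ext-projectives.

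A minor remark: you only ever use the inclusion $\calT_V \subset \ovcalT_V$. This follows from your criterion applied to $\Hom(V,V[1])=0$, so the equality from Proposition~\ref{Prop_silt_tors} is not actually needed.
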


The following observations are natural but important in our arguments.

\begin{Lem}\label{Lem_S_T_common}
Let $U \in \twopsilt A$ and $S,T \in \twosilt A$.
Then the following conditions are equivalent.
\begin{enumerate2}
\item
The complex $S$ is the maximal completion of $U$,
and $T$ is the minimal completion of $U$.
\item
The complex $U$ is a direct summand of $S$,
and the left mutation of $S$ at $S/U$ is $T$. 
\end{enumerate2}
\end{Lem}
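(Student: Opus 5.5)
The plan is to prove (a)$\Rightarrow$(b) and (b)$\Rightarrow$(a) through the torsion-class descriptions of Theorem \ref{Thm_within}, combined with the uniqueness parts of Proposition \ref{Prop_silt_tors} (2). Throughout we decompose $S=\bigoplus_{i=1}^n S_i$ with $U=\bigoplus_{i\notin J}S_i$ and $J$ the complementary index set. Basicness ensures that $S/U=S_J$ makes sense.

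\emph{(a)$\Rightarrow$(b).} Let $S$ be the maximal completion. Since $\calT_U\subset\ovcalT_U=\ovcalT_S=\calT_S\subset\ovcalT_U$, the proposition cited from \cite[Proposition 2.9]{AIR} gives $U\in\add S$. The condition $(\ovcalT_S,\calF_S)=(\ovcalT_U,\calF_U)$ is exactly condition (d) of Theorem \ref{Thm_within}. Hence $T':=\mu_J^-(S)\in\twosilt A$, and by the last statement of that theorem $\calT_{T'}=\calT_U$, that is, $(\calT_{T'},\ovcalF_{T'})=(\calT_U,\ovcalF_U)$. By Proposition \ref{Prop_silt_tors} (2), a 2-term silting complex is determined by its torsion class, and $T$ is defined by the same torsion pair. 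Therefore $T'\simeq T$ and (b) follows.

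\emph{(b)$\Rightarrow$(a).} Assume $U$ is a direct summand of $S$ and $T=\mu_J^-(S)$ with $J$ indexing $S/U$. The hypothesis of (b) includes $T\in\twosilt A$, at least implicitly: $T$ is given in $\twosilt A$. So Theorem \ref{Thm_within} (a)$\Rightarrow$(d) applies and yields $(\ovcalT_S,\calF_S)=(\ovcalT_U,\calF_U)$, which says $S$ is the maximal completion of $U$. The final statement of the same theorem gives $\calT_T=\calT_U$ and $\ovcalF_T=\ovcalF_U$, which says $T$ is the minimal completion.

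The main obstacle is a bookkeeping point rather than a new idea. In Theorem \ref{Thm_within}, $U$ is defined as $S/S_J$, and the identifications $\calT_T=\calT_U$, $\ovcalF_T=\ovcalF_U$ come from its dual part, stated as ``we only check the first half''. I would verify that half explicitly. First, $U\in\add T$ gives $\calT_U\subset\calT_T$. For the reverse inclusion, the triangles $S_j\to U'_j\to T_j\to S_j[1]$ with $U'_j\in\add U$ and $T$ 2-term give a surjection $H^0(U'_j)\to H^0(T_j)$, because $H^1(S_j)=0$. Hence $H^0(T)\in\Fac H^0(U)$, which yields $\calT_T\subset\calT_U$. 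With this direct argument in hand, both directions reduce cleanly to the uniqueness in Proposition \ref{Prop_silt_tors}.
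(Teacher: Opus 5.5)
Your proposal is correct and follows essentially the same route as the paper. For (a)$\Rightarrow$(b), the paper likewise uses Theorem \ref{Thm_within} to get $\mu_J^-(S)\in\twosilt A$, proves $\calT_{\mu_J^-(S)}=\calT_U$ via your surjection $H^0(U')\to H^0(\mu_J^-(S)/U)$ together with $U\in\add \mu_J^-(S)$, and concludes by the bijection in Proposition \ref{Prop_silt_tors} (2); (b)$\Rightarrow$(a) is Theorem \ref{Thm_within} (a)$\Rightarrow$(d) plus its dual, and your appeal to \cite[Proposition 2.9]{AIR} for $U\in\add S$ makes explicit what the paper calls clear.
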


\begin{proof}
(a)$\Rightarrow$(b)
Clearly, $U$ is a direct summand of $S$.

Let $V$ be the left mutation of $S$ at $S/U$, and we show $V=T$.
Since $S$ is the maximal completion of $U$,
we have $H^{-1}(\nu S) \in \calF_S=\calF_U$.
Thus Theorem \ref{Thm_within} implies $V \in \twosilt A$.

Then we have a triangle $S/U \to U' \to V/U \to (S/U)[1]$ with $U' \in \add U$
by Definition-Proposition \ref{Def-Prop_silt_mut},
which induces an exact sequence 
$H^0(S/U) \to H^0(U') \to H^0(V/U) \to 0$.
In particular, $H^0(V/U) \in \Fac H^0(U') \subset \Fac H^0(U)=\calT_U$.
Thus we get $H^0(V) \in \calT_U$; hence $\calT_V \subset \calT_U$.
Since $U \in \add V$, we also have $\calT_U \subset \calT_V$.
Now we obtain $\calT_V=\calT_U$.

Since $V \in \twosilt A$ and $\calT_V=\calT_U$, we get $V=T$.

(b)$\Rightarrow$(a)
Since $S,T \in \twosilt A$ and since
$T$ is the left mutation of $S$ at $S/U$,
Theorem \ref{Thm_within} implies $\ovcalT_S=\ovcalT_U$.
Thus $S$ is the maximal completion of $U$.
Dually, $T$ is the minimal completion of $U$.
\end{proof}

Proposition \ref{Prop_label_filt} can be restated as follows.

\begin{Prop}\cite[Theorem 2.18]{AIR}
Let $U \in \twopsilt A$ with $|U|=|A|-1$,
and take the maximal completion $S$ and the minimal completion $T$.
Then there exists an arrow $S \to T$ in $Q(\twosilt A)$.
All arrows in $Q(\twosilt A)$ arise in this way.
\end{Prop}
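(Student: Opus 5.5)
Given $U \in \twopsilt A$ with $|U|=|A|-1$, let $S$ and $T$ be its maximal and minimal completions. By Proposition \ref{Prop_compl} together with the bijection in Proposition \ref{Prop_silt_tors}, both exist and both lie in $\twosilt A$. By Lemma \ref{Lem_S_T_common}, (a)$\Rightarrow$(b), $U$ is a direct summand of $S$, and $T$ is the left mutation of $S$ at $S/U$. Since $|S|=|A|$ and $|U|=|A|-1$, the complement $S/U$ is a single indecomposable summand $S_j$. Hence $T=\mu_j^-(S)$ is an irreducible left mutation, and since $T \in \twosilt A$, Definition \ref{Def_label} gives an arrow $S \to T$ in $Q(\twosilt A)$.

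For the second assertion, let $\alpha\colon S \to T$ be an arrow in $Q(\twosilt A)$, so $T=\mu_j^-(S)$ for a unique $j$. Put $U:=S/S_j$. Then $U$ is a 2-term presilting complex with $|U|=|A|-1$, being a direct summand of $S$. By Definition-Proposition \ref{Def-Prop_silt_mut}, $U$ is also a direct summand of $T$. Since $U \in \add S$ and the left mutation of $S$ at $S/U$ is $T \in \twosilt A$, Lemma \ref{Lem_S_T_common}, (b)$\Rightarrow$(a), shows that $S$ and $T$ are exactly the maximal and minimal completions of $U$. So every arrow arises from the construction above.

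Both directions reduce to Lemma \ref{Lem_S_T_common}, so there is little real obstacle. The only point needing care is the first direction: we need $\#(S/U)=1$ to conclude that the mutation is irreducible. This follows from basicness and $|S|=|A|$; Theorem \ref{Thm_within} is already built into Lemma \ref{Lem_S_T_common}, and it is what guarantees that $T$ is 2-term.
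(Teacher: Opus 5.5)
Your proof is correct, but it takes a different route from the paper. The paper gives no separate argument. It cites \cite[Theorem 2.18]{AIR} and presents the statement as a reformulation of Proposition~\ref{Prop_label_filt}. That proposition's equalities $(\ovcalT_U,\calF_U)=(\ovcalT_S,\calF_S)$ and $(\calT_U,\ovcalF_U)=(\calT_T,\ovcalF_T)$, for the maximal common summand $U$ of an arrow $S\to T$, are exactly the defining conditions of the maximal and minimal completions, which gives ``all arrows arise this way'' at once.

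For the existence of an arrow attached to an arbitrary $U$ with $|U|=|A|-1$, that cited route rests on the \cite{AIR} fact that such a $U$ has exactly two completions, related by an irreducible mutation as in Proposition~\ref{Prop_two_compl}. One also needs $S\neq T$, which holds because $\calT_U\subsetneq\ovcalT_U$ when $U$ is not silting.

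You instead obtain both assertions as the case $\#(S/U)=1$ of Lemma~\ref{Lem_S_T_common}. Inside the paper, the statement thus becomes a formal corollary of Theorem~\ref{Thm_within} and the simultaneous-mutation formalism. The arrow is produced directly as $T=\mu_j^-(S)$, so you need neither the ``exactly two completions'' count nor a separate check that $S\neq T$. This does not make the result independent of $\tau$-tilting theory, since the inputs to Theorem~\ref{Thm_within} and Lemma~\ref{Lem_S_T_common} (Propositions~\ref{Prop_silt_tors}, \ref{Prop_sbrick}, \ref{Prop_endotop}) are built on it.

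What the cited route gives in addition is the description $\ovcalT_U\cap\ovcalF_U=\Filt Z$ in terms of the label $Z$ of the arrow.
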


By Lemma \ref{Lem_S_T_common} (2) and Theorem \ref{Thm_smc_mut},
we fix the following notations on indecomposable direct summands 
of the completions $S,T \in \twosilt A$ of $U$ 
and the corresponding $X,Y \in \twosmc A$.

\begin{Nota}\label{Nota_S_T}
Let $U=\bigoplus_{i=1}^m U_i \in \twopsilt A$,
and take its maximal and minimal completions $S$ and $T$ respectively.
We fix decompositions 
$S=\bigoplus_{i=1}^n S_i$ and $T=\bigoplus_{i=1}^n T_i$ 
into indecomposable direct summands satisfying the following properties.
\begin{enumerate2}
\item
For each $i \in \{1,\ldots,m\}$, we have $S_i=U_i=T_i$.
\item
For each $j \in \{m+1,\ldots,n\}$, there exists a triangle
$S_j \xrightarrow{f_j} U'_j \to T_j \to S_j[1]$ in $\sfK^\rmb(\proj A)$ with 
$f_j$ a minimal left $(\add U)$-approximation.
\end{enumerate2}
Set $X=\bigoplus_{i=1}^n X_i:=\SH(S)$, $Y=\bigoplus_{i=1}^n Y_i:=\SH(T)$.
Since $J=\{m+1,\ldots,n\}$, we have $\calW_J=\Filt\{X_{m+1},\ldots,X_n\}$.
Thus the following holds.
\begin{enumerate2}
\item[(c)]
For each $i \in \{1,\ldots,m\}$, we have a triangle
$X_i \xrightarrow{g_i} W_i \to Y_i \to X_i[1]$ in $\sfD(A)$ with
$g_i$ a minimal left $\calW_J$-approximation.
\end{enumerate2}
\end{Nota}

By using this notation, we will investigate 
$X=\bigoplus_{i=1}^n X_i$ and $Y=\bigoplus_{i=1}^n Y_i$ in detail.
We start with the following direct consequence of 
Proposition \ref{Prop_X^-_Y^+}.

\begin{Prop}\label{Prop_X^-_Y^+_U}
Let $U=\bigoplus_{i=1}^m U_i \in \twopsilt A$.
\begin{enumerate}
\item
For each $i \in \{1,\ldots,m\}$, 
at least one of $Y_i^+ \ne 0$ and $X_i^- \ne 0$ holds.
\item
For any $j \in \{m+1,\ldots,n\}$, we obtain $X_j^+=Y_j^- \ne 0$
and $X_j^-=Y_j^+=0$.
\end{enumerate}
\end{Prop}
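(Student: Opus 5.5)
We derive this directly from Proposition \ref{Prop_X^-_Y^+}, applied to $X=\SH(S)$ and $J=\{m+1,\ldots,n\}$. The plan is to check the hypotheses of that proposition in the situation of Notation \ref{Nota_S_T}, and to identify its output with $Y=\SH(T)$.

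First, we show that $J \subset \{i \mid X_i^+ \ne 0\}$. Since $S$ is the maximal completion of $U$, we have $(\ovcalT_S,\calF_S)=(\ovcalT_U,\calF_U)$ with $U=S/S_J$. Condition (d) of Theorem \ref{Thm_within} therefore holds, and so condition (c) holds as well: $X_j^+\ne 0$ for every $j\in J$. Consequently Proposition \ref{Prop_X^-_Y^+} applies, and it gives $\mu_J^-(X)\in\twosmc A$.

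Next, we identify $\mu_J^-(X)$ with $Y$, including the indexing. By Lemma \ref{Lem_S_T_common}, $T=\mu_J^-(S)$, where the indexing is chosen as in Notation \ref{Nota_S_T} (b). Theorem \ref{Thm_smc_mut} (2) then gives $\SH(T)=\mu_J^-(X)$ summand by summand. Hence $Y_i$ is exactly the $i$-th summand of $\mu_J^-(X)$ constructed in Theorem \ref{Thm_smc_mut}. This means $Y_i$ is the cone of a minimal left $\calW_J$-approximation of $X_i[-1]$ when $i\le m$, and $Y_j=X_j[1]$ when $j>m$.

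Finally, statement (1) is Proposition \ref{Prop_X^-_Y^+} (1) for $i\in\{1,\ldots,m\}=\{1,\ldots,n\}\setminus J$. Statement (2) is Proposition \ref{Prop_X^-_Y^+} (2); note that the index there should read $j$. Alternatively, (2) follows from $X_j\in\mod A$ (since $X_j^+\neq0$ and Proposition \ref{Prop_stalk}) together with $Y_j=X_j[1]\in(\mod A)[1]$. The only point requiring care is the compatibility of indices between Notation \ref{Nota_S_T} and Theorem \ref{Thm_smc_mut}, and this is exactly what the choice of decomposition in that notation ensures.
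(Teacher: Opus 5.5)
Your proposal is correct and follows the paper's route. The paper records this statement as a direct consequence of Proposition \ref{Prop_X^-_Y^+} applied with $J=\{m+1,\ldots,n\}$, with $Y=\mu_J^-(X)$ supplied by Lemma \ref{Lem_S_T_common} and Theorem \ref{Thm_smc_mut}; your check of the hypothesis $X_j^+\neq 0$ via (d)$\Rightarrow$(c) of Theorem \ref{Thm_within}, and your note on the index typo in part (2), just make explicit what the paper leaves implicit.
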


Then we can show that the semibricks $Y^+$ and $X^-$ 
satisfy the following properties.

\begin{Prop}
Let $U=\bigoplus_{i=1}^m U_i \in \twopsilt A$.
\begin{enumerate}
\item
We have the semibricks $Y^+=\bigoplus_{i=1}^m Y_i^+ \in \fLsbrick A$ and 
$X^-=\bigoplus_{i=1}^m X_i^- \in \fRsbrick A$.
\item
We have $\calT_U=\calT_T=\sfT(Y^+)$ and $\calF_U=\calF_S=\sfF(X^-)$.
\end{enumerate}
\end{Prop}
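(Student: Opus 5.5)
Both parts follow from Proposition \ref{Prop_X^-_Y^+_U} (2) together with the bijections of Proposition \ref{Prop_sbrick}, applied to $S$ and $T$ in Notation \ref{Nota_S_T}.

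\textbf{Part (1).} By Proposition \ref{Prop_X^-_Y^+_U} (2), for every $j \in \{m+1,\ldots,n\}$ we have $Y_j^+=0$ and $X_j^-=0$. Hence
\begin{align*}
Y^+=\bigoplus_{i=1}^n Y_i^+=\bigoplus_{i=1}^m Y_i^+, \qquad X^-=\bigoplus_{i=1}^m X_i^-.
\end{align*}
Since $Y=\SH(T) \in \twosmc A$ and $X=\SH(S) \in \twosmc A$, Proposition \ref{Prop_sbrick} (1) (or Lemma \ref{Lem_LF_size}) shows that $Y^+$ is a left finite semibrick and $X^-$ is a right finite semibrick. This proves (1).

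\textbf{Part (2).} Proposition \ref{Prop_sbrick} (2), applied to $T$ and $Y=\SH(T)$, gives $\calT_T=\sfT(Y^+)$. By definition of the minimal completion, $(\calT_U,\ovcalF_U)=(\calT_T,\ovcalF_T)$, so $\calT_U=\calT_T$. Dually, Proposition \ref{Prop_sbrick} (2) for $S$ and $X=\SH(S)$ gives $\calF_S=\sfF(X^-)$, and the definition of the maximal completion gives $\calF_U=\calF_S$. Alternatively, both equalities are the last statement of Theorem \ref{Thm_within} applied with $J=\{m+1,\ldots,n\}$, which is legitimate by Lemma \ref{Lem_S_T_common}.

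\textbf{Where care is needed.} The only point to check is the indexing: the $i$-th summands of $X$ and $Y$ must correspond to $S_i$ and $T_i$ in the fixed decompositions of Notation \ref{Nota_S_T}. This holds because $Y=\mu_J^-(X)$ by Theorem \ref{Thm_smc_mut} (2), so the index set $J=\{m+1,\ldots,n\}$ used in Proposition \ref{Prop_X^-_Y^+_U} is the correct one.
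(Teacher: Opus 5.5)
Your proof is correct and is essentially the paper's own argument: part (1) comes from Proposition~\ref{Prop_X^-_Y^+_U}~(2) together with Proposition~\ref{Prop_sbrick}~(1), and part (2) comes from the definitions of the maximal and minimal completions together with Proposition~\ref{Prop_sbrick}~(2). One small correction: Lemma~\ref{Lem_LF_size} on its own only shows that $Y^+$ and $X^-$ are semibricks, not that they are left or right finite, so drop it as an alternative citation in part (1) and rely on Proposition~\ref{Prop_sbrick}~(1).
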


\begin{proof}
(1)
The parts $Y^+=\bigoplus_{i=1}^m Y_i^+$ and $X^-=\bigoplus_{i=1}^m X_i^-$
are Proposition \ref{Prop_X^-_Y^+_U} (2).
Moreover Proposition \ref{Prop_sbrick} (1) implies 
$Y^+ \in \fLsbrick A$ and $X^- \in \fRsbrick A$.

(2)
The parts $\calT_U=\calT_T$ and $\calF_U=\calF_S$ follow from
the definitions of $S$ and $T$.
The remaining parts are Proposition \ref{Prop_sbrick} (2).
\end{proof}

The semibricks $Y^+$ and $X^-$ can be calculated 
without taking the completions $S,T$ of $U$.

\begin{Thm}\label{Thm_endotop_U}
Let $U=\bigoplus_{i=1}^m U_i \in \twopsilt A$.
For each $i \in \{1,\ldots,m\}$, we have
\begin{align*}
Y_i^+=H^0(U_i)/\sum_{f \in \rad_A(H^0(U),H^0(U_i))} \Im f, \quad 
X_i^-=\bigcap_{f \in \rad_A(H^{-1}(\nu U_i),H^{-1}(\nu U))} \Ker f,
\end{align*}
and hence the semibricks $Y^+$ and $X^-$ are
\begin{align*}
Y^+&=H^0(U)/\sum_{f \in \rad_A(H^0(U),H^0(U))} \Im f
=H^0(U)/\rad_{\End_A(H^0(U))}H^0(U), \\
X^-&=\bigcap_{f \in \rad_A(H^{-1}(\nu U),H^{-1}(\nu U))} \Ker f 
=\soc_{\End_A(H^{-1}(\nu U))}H^{-1}(\nu U).
\end{align*}
\end{Thm}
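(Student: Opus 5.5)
We reduce the statement to Proposition \ref{Prop_endotop} applied to the completions, and then remove the dependence on the complementary summands. By Proposition \ref{Prop_endotop} for $T \in \twosilt A$ and $i \in \{1,\ldots,m\}$ (where $T_i=U_i$), we have
\begin{align*}
Y_i^+=H^0(U_i)/\sum_{f \in \rad_A(H^0(T),H^0(U_i))} \Im f .
\end{align*}
So for the first formula it suffices to show that
\begin{align*}
\sum_{f \in \rad_A(H^0(T),H^0(U_i))} \Im f=\sum_{f \in \rad_A(H^0(U),H^0(U_i))} \Im f .
\end{align*}
The inclusion $\supset$ is clear, since $H^0(U)$ is a direct summand of $H^0(T)$.

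For $\subset$, decompose $H^0(T)=H^0(U) \oplus \bigoplus_{j>m} H^0(T_j)$. It then suffices to treat a morphism $f \colon H^0(T_j) \to H^0(U_i)$ with $j>m$. The triangle in Notation \ref{Nota_S_T} (b) gives a surjection $H^0(U'_j) \to H^0(T_j)$, as in the proof of Lemma \ref{Lem_S_T_common}, with $U'_j \in \add U$. Hence $\Im f$ is the image of the composite $H^0(U'_j) \to H^0(T_j) \xrightarrow{f} H^0(U_i)$, which is a morphism from $\add H^0(U)$.

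The remaining point is that this composite lies in the radical. If it did not, it would have a component $H^0(U_i) \to H^0(U_i)$ that is split, i.e.\ an isomorphism. That would make $H^0(U_i)$ a direct summand of $H^0(T_j)$ through $f$, contradicting $f$ being radical. Indeed, a radical map cannot become split after precomposition, since $\rad$ is an ideal: if $g=f\pi$ had a split component $s \colon H^0(U_i)\to H^0(U'_j)$ with $f\pi s=1$, then $f$ is a retraction, hence not in the radical. One also needs $H^0(U_i)\neq 0$ for this step; if $H^0(U_i)=0$, both sides are zero and there is nothing to prove. Thus the two sums coincide, and the first formula follows.

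The formula for $X_i^-$ is the dual statement. Apply Proposition \ref{Prop_endotop} to $S$, with $S_i=U_i$. The right mutation $S=\mu_J^+(T)$ comes from triangles $T_j[-1] \to S_j \to U''_j \to T_j$ with $U''_j\in\add U$ (dual of Definition-Proposition \ref{Def-Prop_silt_mut} together with $\mu^+_J\mu^-_J=\mathrm{id}$). Applying $\nu$ and $H^{-1}$ gives an injection $H^{-1}(\nu S_j) \to H^{-1}(\nu U''_j)$ (left exactness at degree $-1$). Hence any radical map $H^{-1}(\nu U_i) \to H^{-1}(\nu S_j)$ composes to a radical map into $\add H^{-1}(\nu U)$ with the same kernel. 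Therefore
\begin{align*}
\bigcap_{f \in \rad_A(H^{-1}(\nu U_i),H^{-1}(\nu S))}\Ker f=\bigcap_{f \in \rad_A(H^{-1}(\nu U_i),H^{-1}(\nu U))}\Ker f .
\end{align*}
The descriptions of $Y^+$ and $X^-$ as a whole then follow by summing over $i$. This uses Proposition \ref{Prop_X^-_Y^+_U} (2), i.e.\ $Y_j^+=X_j^-=0$ for $j>m$, together with the standard identification of $\sum_{f \in \rad} \Im f$ with $\rad_{\End}$ applied to the module, and dually for the socle.

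The main obstacle is the exactness claim, namely getting a surjection $H^0(U'_j)\to H^0(T_j)$ and an injection $H^{-1}(\nu S_j)\to H^{-1}(\nu U''_j)$ from the triangles. For this we use that all complexes involved are 2-term, by Theorem \ref{Thm_within}, so the long exact cohomology sequences truncate appropriately: $H^1(S_j)=0$ and $H^{-2}(\nu T_j)=0$.
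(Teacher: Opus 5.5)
Your proof is correct and follows the paper's route. You apply Proposition \ref{Prop_endotop} to $T$ (resp.\ $S$), then show that the extra radical maps from $H^0(T/U)$ (resp.\ into $H^{-1}(\nu(S/U))$) can be replaced by radical maps from $\add H^0(U)$ (resp.\ into $\add H^{-1}(\nu U)$) with the same image (resp.\ kernel), via a surjection (resp.\ injection). The only cosmetic difference is how you obtain that surjection $H^0(U'_j)\to H^0(T_j)$ and injection $H^{-1}(\nu S_j)\to H^{-1}(\nu U'_j)$: you read them off the mutation triangle and its rotation, so you do not need $\mu_J^+\mu_J^-=\mathrm{id}$, whereas the paper invokes $\calT_T=\calT_U$ and $\calF_S=\calF_U$, which encode the same fact.
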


\begin{proof}
By Proposition \ref{Prop_endotop}, 
for each $i \in \{1,\ldots,m\}$, we have
\begin{align*}
Y_i^+=H^0(T_i)/\sum_{f \in \rad_A(H^0(T),H^0(T_i))} \Im f.
\end{align*}
We show 
\begin{align*}
\sum_{f \in \rad_A(H^0(T),H^0(T_i))} \Im f
=\sum_{f \in \rad_A(H^0(U),H^0(U_i))} \Im f.
\end{align*}

The ``$\supset$'' part follows from $U \in \add T$ and $U_i=T_i$.

To show the ``$\subset$'' part, 
it suffices to check that any $f \in \rad_A(H^0(T/U),H^0(T_i))$
admits some $s \in \Z_{\ge 0}$ and
$f' \in \rad_A(H^0(U)^{\oplus s},H^0(U_i))$ such that $\Im f'=\Im f$.
Since $H^0(T/U) \in \calT_{T/U} \subset \calT_T=\calT_U$, 
we can take $s \in \Z_{\ge 0}$ and 
a surjection $g \colon H^0(U)^{\oplus s} \to H^0(T/U)$.
Setting $f':=f \circ g$, we have $\Im f'=\Im f$.
Moreover $g$ is in the radical, since $\add U \cap \add(T/U)=\{0\}$.
Thus we get that $f$ is in the radical.
The ``$\subset$'' part has been proved.

Similarly, by using $\calF_S=\calF_U$, we get 
\begin{align*}
X_i^-=\bigcap_{f \in \rad_A(H^{-1}(\nu S_i),H^{-1}(\nu S))} \Ker f
=\bigcap_{f \in \rad_A(H^{-1}(\nu U_i),H^{-1}(\nu U))} \Ker f.
\end{align*}

These imply the remaining equalities.
\end{proof}

\subsection{$\tau$-Tilting reduction}

For any $j \in \{m+1,\ldots,n\}$, 
Proposition \ref{Prop_X^-_Y^+_U} implies $X_j^+=Y_j^- \ne 0$.
To study these bricks, 
we use \emph{$\tau$-tilting reduction} of Jasso \cite{Jasso}.
This theory focuses on the subcategory and the algebra defined below.

\begin{Def}
Let $U \in \twopsilt A$, 
and $S$ and $T$ be its maximal and minimal completions.
Then we set the \emph{$\tau$-perpendicular subcategory} 
$\calW_U \subset \mod A$ by
\begin{align*}
\calW_U:=\ovcalT_U \cap \ovcalF_U=\ovcalT_S \cap \ovcalF_T
\end{align*}
and a finite dimensional algebra $B$ by
\begin{align*}
B=B_U:=\End_A(H^0(S))/\langle e \rangle,
\end{align*}
where $e \in \End_A(H^0(S))$ is the idempotent $H^0(S) \to H^0(U) \to H^0(S)$.
\end{Def}

The following are some results in $\tau$-tilting reduction,
where $\twosilt_U A$ (resp.~$\twopsilt_U A$) 
is the set consisting of $V \in \twosilt A$ (resp.~$V \in \twopsilt A$)
such that $U \in \add V$.

\begin{Prop}\label{Prop_reduc}(cf.~\cite[Theorem 4.12]{DIRRT})
Let $U \in \twopsilt A$.
\begin{enumerate}
\item
\cite[Proposition 3.6]{Jasso}
The subcategory $\calW_U$ is a wide subcategory of $\mod A$;
that is, $\calW_U$ is an abelian subcategory closed under
extensions in $\mod A$.
\item
\cite[Theorem 3.8]{Jasso}
There exists a category equivalence
\begin{align*}
\Phi \colon \Hom_A(H^0(S),?) \colon \calW_U \to \mod B.
\end{align*}
In particular, $\calW_U$ has exactly $|B|=|A|-|U|$ isoclasses
of simple objects.
\item
\cite[Theorems 3.16, 4.12]{Jasso}
There exist bijections
\begin{align*}
\reduc \colon \twosilt_U A \to \twosilt B, \quad
\reduc \colon \twopsilt_U A \to \twopsilt B 
\end{align*}
such that
\begin{enumerate2}
\item
preserving indecomposable direct summands not in $\add U$;
\item
$\Phi(\calT_V \cap \calW_U)=\calT_{\reduc(V)}$ and
$\Phi(\ovcalT_V \cap \calW_U)=\ovcalT_{\reduc(V)}$
for each $V \in \twopsilt_U A$;
\item
$\reduc(S)=B$, $\reduc(U)=0$, $\reduc(T)=B[1]$.
\end{enumerate2}
\end{enumerate}
\end{Prop}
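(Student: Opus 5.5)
The parts (1), (2) and (3)(a)(b) are quoted from Jasso \cite{Jasso}, so the plan is to translate his statements into our notation and derive the rest. First, $\calW_U=\ovcalT_U\cap\ovcalF_U$ agrees with Jasso's $\tau$-perpendicular category: we have $\ovcalF_U=H^0(U)^\perp$ by Definition-Lemma \ref{Def-Lem_ftors}, and $\ovcalT_U={^\perp H^{-1}(\nu U)}={^\perp(\tau H^0(U))}$ in the language of \cite{AIR}. The second expression $\ovcalT_S\cap\ovcalF_T$ then holds because $S$ and $T$ are the maximal and minimal completions. Similarly, $H^0(S)$ is the Bongartz completion of the $\tau$-rigid module $H^0(U)$, so $B$ is exactly Jasso's algebra. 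With these identifications, (1), (2) and the existence of the bijections in (3) with (a) and (b) are \cite[Proposition 3.6, Theorems 3.8, 3.16, 4.12]{Jasso}.

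For the count $|B|=|A|-|U|$ in (2), the plan is as follows. By (2), the simple objects of $\calW_U$ correspond to the simple $B$-modules. The simple $B$-modules are the simple $\End_A(H^0(S))$-modules not annihilated by $e$ being zero, i.e.\ those indexed by indecomposable summands of $H^0(S)$ outside $\add H^0(U)$. I would count these using the standard fact from \cite{AIR} that $|H^0(S)|$ plus the number of shifted projective summands of $S$ equals $|A|$. One subtlety must be checked here: for the maximal completion $S$, every summand of $S$ outside $\add U$ has $H^0\ne0$. This should follow from Theorem \ref{Thm_within} and Proposition \ref{Prop_X^-_Y^+_U}(2), which give $X_j^+\ne0$ for $j>m$. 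By Proposition \ref{Prop_endotop}, $X_j^+\ne0$ forces $H^0(S_j)\ne0$. The same statement also follows from Theorem 3.8 of \cite{Jasso} directly.

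For (3)(c), which I expect to be the only part needing argument, I would use (b) together with the fact that a 2-term silting complex is determined by its torsion class (Proposition \ref{Prop_silt_tors}(2)). For $S$: we have $\calT_S=\ovcalT_S=\ovcalT_U\supset\calW_U$, so $\Phi(\calT_S\cap\calW_U)=\Phi(\calW_U)=\mod B$. Hence $\calT_{\reduc(S)}=\mod B=\calT_B$, giving $\reduc(S)=B$. For $T$: we have $\calT_T=\calT_U=\Fac H^0(U)$, while $\calW_U\subset H^0(U)^\perp$. Therefore $\calT_T\cap\calW_U=\{0\}$, so $\calT_{\reduc(T)}=\{0\}=\calT_{B[1]}$ and $\reduc(T)=B[1]$. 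Finally, $\reduc(U)=0$ follows from (a): $U$ has no indecomposable summands outside $\add U$, and $\reduc$ preserves exactly those summands. Alternatively, $\reduc(U)$ is the common summand of $\reduc(S)=B$ and $\reduc(T)=B[1]$, which is $0$.
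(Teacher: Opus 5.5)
Your proposal is correct and essentially follows the paper, which gives no argument here beyond quoting \cite{Jasso} and \cite{DIRRT}. Your own derivations are valid: the count $|B|=|A|-|U|$ uses that every $S_j$ with $j>m$ has $H^0(S_j)\ne0$ by Proposition \ref{Prop_X^-_Y^+_U}(2) and Proposition \ref{Prop_endotop}, and (3)(c) follows from (b) and the bijection $V\mapsto\calT_V$ of Proposition \ref{Prop_silt_tors}(2), using $\calW_U\subset\ovcalT_U=\calT_S$ and $\calT_T\cap\calW_U\subset\Fac H^0(U)\cap H^0(U)^\perp=\{0\}$.

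One correction in the translation step: if $U$ has a shifted projective summand $P[1]$, then $H^{-1}(\nu U)=\tau H^0(U)\oplus\nu P$. Hence $\ovcalT_U={^\perp(\tau H^0(U))}\cap P^\perp$, and $H^0(S)$ is in general not the Bongartz completion of $H^0(U)$ in $\mod A$. For example, $U=A[1]$ gives $\calW_U=0$ rather than $\mod A$. So you must invoke Jasso's version for $\tau$-rigid pairs, not the module version.
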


We remark that $\reduc$ can be extended to a triangle functor
$\sfK^\rmb(\proj A) \to \sfK^\rmb(\proj B)$ \cite[Theorem 4.11]{AHIKM}.
As a direct consequence of Proposition \ref{Prop_reduc} (3), 
$B$ is basic, and decomposed as $B=\bigoplus_{j=m+1}^n \reduc(U \oplus S_j)$
into indecomposable direct summands.
Thus we use the following notation.

\begin{Nota}\label{Nota_P(k)_B}
Let $U \in \twopsilt A$.
For each $k \in \{1,\ldots,n-m\}$, we set
\begin{align*}
P(k)_B:=\reduc(U \oplus S_{m+k}), \quad L(k)_B:=P(k)_B/{\rad P(k)_B}.
\end{align*}
Then $P(1)_B,\ldots,P(n-m)_B$ are 
the nonisomorphic indecomposable projective $B$-modules,
so we adopt $B=\bigoplus_{k=1}^{n-m} P(k)_B$
as the standard decomposition of $B$ as $B$-modules.
Moreover,
$L(1)_B,\ldots,L(n-m)_B$ are the nonisomorphic simple $B$-modules.
\end{Nota}

Recall the exchange quiver $Q(\twosilt A)$ of $\twosilt A$ 
with brick labeling in Definition \ref{Def_label}.
Below we explain that this brick labeling is 
compatible with $\tau$-tilting reduction.
We set $Q(\twosilt_U A)$ as the full subquiver of $Q(\twosilt A)$
whose vertices are the elements of $\twosilt_U A$.

\begin{Prop}\label{Prop_reduc_label}
Let $U \in \twopsilt A$.
\begin{enumerate}
\item
\cite[Corollary 3.17]{Jasso}
There exists an isomorphism $Q(\twosilt_U A) \to Q(\twosilt B)$ 
of quivers induced by $\reduc$.
\item
\cite[Theorem 2.21]{A-semi}
Let $\alpha \colon V \to V'$ be an arrow in $Q(\twosilt_U A)$.
Then its label $L_\alpha$ is in $\calW_U$, 
and the label of the arrow 
$\reduc(\alpha) \colon \reduc(V) \to \reduc(V')$
is $\Phi(L_\alpha)=L_{\reduc(\alpha)}$.
\end{enumerate}
\end{Prop}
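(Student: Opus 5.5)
We will deduce both parts from Proposition \ref{Prop_reduc} (3), together with the order-theoretic description of arrows in Proposition \ref{Prop_two_compl} and the description of labels in Proposition \ref{Prop_label_filt}.

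For (1), we first note that an arrow $V \to V'$ in $Q(\twosilt_U A)$ is an irreducible mutation, so $V$ and $V'$ share an indecomposable summand set of size $n-1$ that contains $\add U$. The exchanged summands therefore lie outside $\add U$. Since $\reduc$ preserves indecomposable summands not in $\add U$ (Proposition \ref{Prop_reduc} (3)(a)), $\reduc(V)$ and $\reduc(V')$ differ in exactly one indecomposable summand. By Proposition \ref{Prop_two_compl}, the direction of the arrow is determined by whether $\calT_{V'} \subsetneq \calT_V$ or $\calT_{V'} \supsetneq \calT_V$.

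For $V \in \twosilt_U A$ we have $\calT_U \subset \calT_V \subset \ovcalT_U$. We will show that $V \mapsto \calT_V \cap \calW_U$ preserves and reflects inclusion on $\twosilt_U A$, using the standard fact that $\calT_V$ is the extension closure of $\calT_U$ and $\calT_V \cap \calW_U$. Combined with $\Phi(\calT_V \cap \calW_U)=\calT_{\reduc(V)}$, this shows that $\reduc$ preserves and reflects the strict inclusions. Since $\reduc$ is bijective, this yields the quiver isomorphism.

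For (2), let $\alpha \colon V \to V'$ be an arrow, and let $U''$ be the maximal common summand of $V$ and $V'$, so that $U \in \add U''$ and $|U''|=n-1$. By Proposition \ref{Prop_label_filt} we have $\Filt L_\alpha=\ovcalT_{U''} \cap \ovcalF_{U''}$. Since $U \in \add U''$, we have $\ovcalT_{U''} \subset \ovcalT_U$ and $\ovcalF_{U''} \subset \ovcalF_U$, so $L_\alpha \in \calW_U$.

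Next, $\reduc(U'')$ is the maximal common summand of $\reduc(V)$ and $\reduc(V')$, so Proposition \ref{Prop_label_filt} applied over $B$ gives $\Filt L_{\reduc(\alpha)}=\ovcalT_{\reduc(U'')} \cap \ovcalF_{\reduc(U'')}$. Proposition \ref{Prop_reduc} (3)(b) gives $\Phi(\ovcalT_{U''} \cap \calW_U)=\ovcalT_{\reduc(U'')}$. We still need the torsion-free analogue $\Phi(\ovcalF_{U''} \cap \calW_U)=\ovcalF_{\reduc(U'')}$. We obtain it by observing that $\ovcalF_{U''}\cap\calW_U=(\calT_{U''})^\perp\cap\calW_U$ is the right perpendicular of $\calT_{U''}\cap\calW_U$ inside $\calW_U$. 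This uses that $\calT_{U''}\cap\calW_U$ is a torsion class in the abelian category $\calW_U$ whose torsion-free part is $\ovcalF_{U''} \cap \calW_U$, which holds because $\calT_U \subset \calT_{U''}$. Since $\Phi$ is an equivalence, it transports perpendiculars, and (3)(b) for $\calT$ then gives the claim.

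Hence $\Phi(\Filt L_\alpha)=\Filt L_{\reduc(\alpha)}$. Both sides are length categories with a unique simple object, namely the bricks themselves, and an equivalence preserves simples, so $\Phi(L_\alpha) \simeq L_{\reduc(\alpha)}$.

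The main obstacle is the torsion-free compatibility, i.e.\ identifying $\ovcalF_{U''}\cap\calW_U$ as a perpendicular category inside $\calW_U$. We would handle it first, checking carefully that $(\calT_{U''}\cap\calW_U,\ovcalF_{U''}\cap\calW_U)$ is a torsion pair in $\calW_U$.
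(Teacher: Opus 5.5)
Your proof is correct, and it takes a different route from the paper only in that the paper gives no argument: it quotes both parts from \cite[Corollary 3.17]{Jasso} and \cite[Theorem 2.21]{A-semi}. Your write-up instead derives the proposition from Proposition \ref{Prop_reduc}~(3), Proposition \ref{Prop_two_compl} and Proposition \ref{Prop_label_filt}.

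The key point is an elementary observation. For any torsion class $\calT$ with $\calT_U\subset\calT\subset\ovcalT_U$, the $(\calT_U,\ovcalF_U)$-torsion sequence of $M\in\calT$ has its quotient in $\calT\cap\calW_U$. This is why you need only the torsion-class half of Proposition \ref{Prop_reduc}~(3)(b):
\begin{itemize}
\item applied to $\calT_V$, it gives the order reflection in (1);
\item applied to $\calT_{U''}$, it gives $\ovcalF_{U''}\cap\calW_U=(\calT_{U''}\cap\calW_U)^\perp\cap\calW_U$, so the torsion-free compatibility follows by transporting perpendiculars along $\Phi$.
\end{itemize}
The cited route takes Jasso's order isomorphism between torsion classes in $[\calT_U,\ovcalT_U]$ and torsion classes of $\calW_U$ as a black box. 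Yours costs a few more lines but uses nothing beyond what the paper already records.

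Two points to tighten when you write it up.
\begin{itemize}
\item The converse half of (1), that an arrow $\reduc(V)\to\reduc(V')$ lifts, needs more than bijectivity of $\reduc$ on $\twosilt_U A$. You must show that common indecomposable summands of $\reduc(V)$ and $\reduc(V')$ come from common summands of $V$ and $V'$. This follows from injectivity of $\reduc$ on $\twopsilt_U A$, applied to the complexes $U\oplus M$ with $M$ indecomposable.
\item The torsion-pair check in $\calW_U$ uses both $\calT_U\subset\calT_{U''}$ and $\calT_{U''}\subset\ovcalT_U$, not only the first. Both are immediate from $U\in\add U''$.
\end{itemize}
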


Propositions \ref{Prop_reduc} and \ref{Prop_reduc_label} give
the following information.

\begin{Prop}\label{Prop_reduc_source}
Let $U=\bigoplus_{i=1}^m U_i \in \twopsilt A$.
Under Notations \ref{Nota_S_T} and \ref{Nota_P(k)_B}, 
the following statements hold.
\begin{enumerate}
\item
The vertex $S$ is the unique source in $Q(\twosilt_U A)$,
and the arrows starting at $S$ in $Q(\twosilt_U A)$ are
$\alpha_j \colon S \to \mu_j^-(S)$ for $j \in \{m+1,\ldots,n\}$.
\item
For each $j \in \{m+1,\ldots,n\}$, 
we have $\reduc(\mu_j^-(S))=\mu_{j-m}^-(B)$,
and the arrow $\alpha_j \colon S \to \mu_j^-(S)$ in $Q(\twosilt_U A)$ 
is sent to $\reduc(\alpha_j) \colon B \to \mu_{j-m}^-(B)$.
\end{enumerate}
\end{Prop}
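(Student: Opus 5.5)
The plan is to transport everything through the bijection $\reduc$ and the isomorphism of exchange quivers in Proposition \ref{Prop_reduc_label}. Together with Example \ref{Ex_A_source} applied to $B$, this gives the whole statement.

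For (1), recall that by Proposition \ref{Prop_reduc_label} (1), $\reduc$ induces an isomorphism $Q(\twosilt_U A) \to Q(\twosilt B)$. By Proposition \ref{Prop_reduc} (3)(c) it sends $S$ to $B$. Example \ref{Ex_A_source} (applied to the basic algebra $B$) says $B$ is the unique source of $Q(\twosilt B)$. Hence $S$ is the unique source of $Q(\twosilt_U A)$.

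It remains to identify the arrows starting at $S$. Since $Q(\twosilt_U A)$ is a full subquiver of $Q(\twosilt A)$, any arrow $S \to V$ in it is an irreducible left mutation $V=\mu_k^-(S)$ with $U \in \add V$. By the preceding Proposition, the maximal common summand of $S$ and $\mu_k^-(S)$ is $S/S_k$. So $U \in \add(S/S_k)$, which forces $k \notin \{1,\ldots,m\}$, i.e.\ $k=j \in \{m+1,\ldots,n\}$.

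Conversely, for such $j$ we need $\mu_j^-(S) \in \twosilt A$. By Theorem \ref{Thm_within} (with $J=\{j\}$) this is equivalent to $X_j^+ \ne 0$, which is Proposition \ref{Prop_X^-_Y^+_U} (2). Alternatively, use Theorem \ref{Thm_within} (d) together with $\calF_U \subset \calF_{S/S_j} \subset \calF_S=\calF_U$. Moreover $U \in \add(S/S_j) \subset \add \mu_j^-(S)$, so $\mu_j^-(S) \in \twosilt_U A$. Thus the arrows out of $S$ are exactly the $\alpha_j$ for $j \in \{m+1,\ldots,n\}$; these are $n-m$ of them, matching $|B|$.

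For (2), I would use Proposition \ref{Prop_reduc} (3)(a): $\reduc$ preserves indecomposable direct summands outside $\add U$. Then $\reduc(\mu_j^-(S))$ shares with $\reduc(S)=B$ every indecomposable summand except $\reduc(U \oplus S_j)$, which by Notation \ref{Nota_P(k)_B} is $P(j-m)_B$. By the quiver isomorphism, $\reduc(\alpha_j)$ is an arrow $B \to \reduc(\mu_j^-(S))$ in $Q(\twosilt B)$. Every arrow out of $B$ is $\mu_k^-(B)$ for a unique $k$, namely the index of the summand that changes. Since the changed summand is $P(j-m)_B$, we get $\reduc(\mu_j^-(S))=\mu_{j-m}^-(B)$.

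The step I expect to require the most care is this index bookkeeping in (2): making precise that "the indecomposable summand of $S$ outside $\add U$ that is replaced" corresponds under $\reduc$ to the summand $P(j-m)_B$ of $B$ being replaced. Concretely, $\reduc$ is defined on summands via $\reduc(U\oplus S_j)$, and (3)(a) identifies $\reduc$ of the new summand $T_j$ with the new summand of the mutated $B$. I would argue through the uniqueness in Proposition \ref{Prop_two_compl} (1): a 2-term silting complex differing from $B$ only at the $(j-m)$-th summand is unique. Hence $\reduc(\mu_j^-(S))$ is $\mu_{j-m}^-(B)$ rather than $\mu_{j-m}^+(B)$, and the latter is excluded anyway since $B$ is a source.
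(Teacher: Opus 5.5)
Your proposal is correct and follows essentially the same route as the paper. Everything is transported along $\reduc$ using $\reduc(S)=B$, the quiver isomorphism of Proposition \ref{Prop_reduc_label} (1) and Example \ref{Ex_A_source} for $B$, and (2) is read off from the fact that the replaced summand is $\reduc(U\oplus S_j)=P(j-m)_B$. The only small difference is in (1): for $j\in\{m+1,\ldots,n\}$ you check directly via Theorem \ref{Thm_within} that $\mu_j^-(S)\in\twosilt A$, whereas the paper takes the $2$-term mutation of $S$ at $S_j$ and deduces its orientation from $S$ being the source of $Q(\twosilt_U A)$.
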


\begin{proof}
(1)
Since $Q(\twosilt B)$ has the unique source $B$ 
by Example \ref{Ex_A_source},
$S$ is the unique source of $Q(\twosilt_U A)$ 
by Propositions \ref{Prop_reduc} (3) and \ref{Prop_reduc_label} (1).

Let $\alpha$ be an arrow starting at $S$ in $Q(\twosilt_U A)$.
Since $Q(\twosilt_U A)$ is a full subquiver of $Q(\twosilt A)$,
there exists $j \in \{1,\ldots,n\}$ such that 
the target of $S$ is $\mu_j^-(S)$.
Since $\mu_j^-(S) \in \twosilt_U A$, 
Notation \ref{Nota_S_T} gives $j \in \{m+1,\ldots,n\}$.

Conversely, for any $j \in \{m+1,\ldots,n\}$,
the mutation $V$ of $S$ at $S_j \notin \add U$ is in $\twosilt_U A$.
Thus there exists an arrow between $S$ and $V$ in $Q(\twosilt_U A)$.
The orientation of the arrow is $S \to V$,
since $S$ is a source of $Q(\twosilt_U A)$.

(2)
By definition, $\mu_j^-(S)$ is the left mutation of $S$ at $S_j$,
so $\reduc(\mu_j^-(S))$ is the left mutation of $B$ 
at $\reduc(U \oplus S_j)$
by Propositions \ref{Prop_reduc} (3) and \ref{Prop_reduc_label} (1).
By Notation \ref{Nota_P(k)_B}, we get $\reduc(U \oplus S_j)=P(j-m)_B$. 
These imply $\reduc(\mu_j^-(S))=\mu_{j-m}^-(B)$.
This and (1) give the remaining statement.
\end{proof}

The labels of these arrows give the following properties.
See also \cite[Proposition 4.1]{A}.

\begin{Thm}\label{Thm_sim_W_U}
Let $U \in \twopsilt A$.
\begin{enumerate}
\item
Let $j \in \{m+1,\ldots,n\}$.
Then we have
$X_j^+=Y_j^- \in \simple \calW_U$ and $\Phi(X_j^+)=\Phi(Y_j^-)=L(j-m)_B$.
These bricks label 
the arrows $S \to \mu_j^-(S)$ in $Q(\twosilt_U A)$
and $B \to \mu_{j-m}^-(B)$ in $Q(\twosilt B)$, respectively.
\item
We have $\simple \calW_U=\{X_{m+1}^+,\ldots,X_n^+\}$ and
$\calW_U=\Filt \{X_{m+1}^+,\ldots,X_n^+\}$.
\end{enumerate}
\end{Thm}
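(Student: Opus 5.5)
We will derive everything from the compatibility of the brick labeling with $\tau$-tilting reduction (Proposition \ref{Prop_reduc_label}), together with the description of the arrows at $S$ in Proposition \ref{Prop_reduc_source}.

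For (1), fix $j \in \{m+1,\ldots,n\}$. By Proposition \ref{Prop_X^-_Y^+_U} (2) we already know $X_j^+=Y_j^-\ne 0$, and by Definition \ref{Def_label} the arrow $\alpha_j \colon S \to \mu_j^-(S)$ in $Q(\twosilt A)$ has label $X_j^+$. By Proposition \ref{Prop_reduc_source} (1), $\alpha_j$ is an arrow of $Q(\twosilt_U A)$. Proposition \ref{Prop_reduc_label} (2) then gives two facts. First, $X_j^+ \in \calW_U$. Second, $\Phi(X_j^+)$ is the label of $\reduc(\alpha_j)$, and by Proposition \ref{Prop_reduc_source} (2) this arrow is $B \to \mu_{j-m}^-(B)$. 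By Example \ref{Ex_A_source} applied to the basic algebra $B$ with the decomposition of Notation \ref{Nota_P(k)_B}, the label of $B \to \mu_{j-m}^-(B)$ is the simple module $L(j-m)_B$. Hence $\Phi(X_j^+)=L(j-m)_B$.

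Since $\Phi \colon \calW_U \to \mod B$ is an equivalence (Proposition \ref{Prop_reduc} (2)), it sends simple objects to simple objects. Therefore $X_j^+=Y_j^-$ is simple in $\calW_U$, which proves (1).

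For (2), the modules $L(1)_B,\ldots,L(n-m)_B$ are pairwise nonisomorphic and exhaust the simple $B$-modules. Transporting along $\Phi^{-1}$, the bricks $X_{m+1}^+,\ldots,X_n^+$ are pairwise nonisomorphic and exhaust $\simple \calW_U$. Since $\mod B=\Filt\{L(k)_B\}$ as a length category and $\Phi$ is an exact equivalence of abelian categories, we get $\calW_U=\Filt\{X_{m+1}^+,\ldots,X_n^+\}$. Here filtrations are taken in $\calW_U$; they agree with filtrations in $\mod A$ because $\calW_U$ is wide (Proposition \ref{Prop_reduc} (1)).

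The only delicate point is the bookkeeping of indices, namely that the arrow $\alpha_j$ is sent by $\reduc$ to the arrow of $B$ at the summand $P(j-m)_B$. This is exactly what Proposition \ref{Prop_reduc_source} (2) and Notation \ref{Nota_P(k)_B} provide, so no further obstacle is expected.
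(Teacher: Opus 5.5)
Your proposal is correct and follows essentially the same route as the paper. Both arguments get $X_j^+=Y_j^-$ from Proposition \ref{Prop_X^-_Y^+_U}, then use Proposition \ref{Prop_reduc_label}~(2), Proposition \ref{Prop_reduc_source}~(2) and Example \ref{Ex_A_source} to obtain $\Phi(X_j^+)=L(j-m)_B$, and transport simples and filtrations along the equivalence $\Phi$ of Proposition \ref{Prop_reduc}~(2). Your remark that filtrations in $\calW_U$ agree with those in $\mod A$ because $\calW_U$ is wide is a small detail the paper leaves implicit.
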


\begin{proof}
The equivalence $\Phi \colon \calW_U \to \mod B$ 
in Proposition \ref{Prop_reduc} (2)
sends $\simple \calW_U$ to $\simple (\mod B)$,
which is the set of isomorphic classes of simple $B$-modules.

(1)
Proposition \ref{Prop_X^-_Y^+_U} (2) implies $X_j^+=Y_j^-$. 

By the definition of brick labeling, 
$X_j^+$ labels the arrow $S \to \mu_j^-(S)$ in $Q(\twosilt_U A)$.
Thus Proposition \ref{Prop_reduc_label} (2) 
implies $X_j^+ \in \calW_U$, and
$\Phi(X_j^+)$ is the label of the arrow $\reduc(S) \to \reduc(\mu_j^-(S))$
in $Q(\twosilt B)$.
The latter arrow is $B \to \mu_{j-m}^-(B)$ 
by Proposition \ref{Prop_reduc_source} (2), 
and its label is $L(j-m)_B$ by Example \ref{Ex_A_source}.
Therefore we get $\Phi(X_j^+)=L(j-m)_B$.

Then the first remark and $\Phi(X_j^+)=L(j-m)_B$ imply 
$X_j^+ \in \simple \calW_U$.

Similarly, we obtain $\Phi(Y_j^-)=L(j-m)_B$ and $Y_j^- \in \simple \calW_U$.

(2)
(1) implies that $\Phi$ sends $\{X_{m+1}^+,\ldots,X_n^+\}$ 
to $\{L(1)^B,\ldots,L(n-m)^B\}=\simple (\mod B)$.
Thus the first remark gives 
$\simple \calW_U=\{X_{m+1}^+,\ldots,X_n^+\}$.
We have $\calW_U=\Filt \{X_{m+1}^+,\ldots,X_n^+\}$.
\end{proof}

Then we have the following information on arrows involving $S$ and $T$.

\begin{Prop}\label{Prop_label_S_T}
Let $U \in \twopsilt A$.
\begin{enumerate}
\item
Let $i \in \{1,\ldots,m\}$.
Then there exists at least an arrow $\mu_i^+(S) \to S$ or $T \to \mu_i^-(T)$
in $Q(\twosilt A)$.
Such an arrow cannot be in $Q(\twosilt_U A)$.
\item
In $Q(\twosilt_U A)$, the vertex $S$ is the unique source.
The arrows starting at $S$ in $Q(\twosilt_U A)$ are 
$S \to \mu_j^-(S)$ labeled by the brick $X_j^+=Y_j^-$ 
for $j \in \{m+1,\ldots,n\}$.
\item
In $Q(\twosilt_U A)$, the vertex $T$ is the unique sink.
The arrows ending at $T$ in $Q(\twosilt_U A)$ are 
$\mu_j^+(T) \to T$ labeled by the brick $X_j^+=Y_j^-$ 
for $j \in \{m+1,\ldots,n\}$.
\end{enumerate}
\end{Prop}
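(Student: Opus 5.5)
The plan is to prove the three parts separately, relying mainly on Propositions \ref{Prop_X^-_Y^+_U}, \ref{Prop_two_compl}, \ref{Prop_reduc_source} and Theorem \ref{Thm_sim_W_U}, together with the quiver isomorphism of Proposition \ref{Prop_reduc_label}.

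For (1), fix $i \in \{1,\ldots,m\}$. By Proposition \ref{Prop_X^-_Y^+_U} (1), we have $X_i^- \ne 0$ or $Y_i^+ \ne 0$.
\begin{itemize}
\item If $X_i^- \ne 0$, then Proposition \ref{Prop_two_compl} (2)(b), applied to $S$ at index $i$, gives that the 2-term mutation of $S$ at $S_i$ is $\mu_i^+(S)$. So there is an arrow $\mu_i^+(S) \to S$.
\item If $Y_i^+ \ne 0$, then case (2)(a) applied to $T$ gives an arrow $T \to \mu_i^-(T)$.
\end{itemize}
Neither arrow lies in $Q(\twosilt_U A)$. Indeed, $U_i=S_i=T_i$ is exchanged, and $\mu_i^\pm(S)$ and $\mu_i^-(T)$ are basic with $U_i$ replaced by a nonisomorphic indecomposable (Definition-Proposition \ref{Def-Prop_silt_mut} and the maximal common summand proposition). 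Hence these mutations do not contain $U$ as a direct summand, so they are not in $\twosilt_U A$.

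For (2), the statement is essentially Proposition \ref{Prop_reduc_source} (1). It gives that $S$ is the unique source of $Q(\twosilt_U A)$ and that the outgoing arrows are $S \to \mu_j^-(S)$ for $j \in \{m+1,\ldots,n\}$. Their labels are $X_j^+$ by Definition \ref{Def_label}, and $X_j^+=Y_j^-$ by Theorem \ref{Thm_sim_W_U} (1).

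Part (3) is the dual and is the step needing a little care.
\begin{itemize}
\item Under $\reduc$, $T$ corresponds to $B[1]$ by Proposition \ref{Prop_reduc} (3)(c). By the dual of Example \ref{Ex_A_source}, $B[1]$ is the unique sink of $Q(\twosilt B)$: a sink $V$ has $\SH(V)^+=0$, which forces $\SH(V)^-$ to be all simples, i.e.\ $V=B[1]$. Transporting through the isomorphism of Proposition \ref{Prop_reduc_label} (1), $T$ is the unique sink of $Q(\twosilt_U A)$.
\item The arrows ending at $T$ in the subquiver come from mutations at summands not in $\add U$, namely at $T_j$ for $j \in \{m+1,\ldots,n\}$. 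Since $T$ is a sink, these arrows must be $\mu_j^+(T) \to T$.
\item By Lemma \ref{Lem_S_T_common} we have $T=\mu_J^-(S)$ with $J=\{m+1,\ldots,n\}$, and by Definition-Proposition \ref{Def-Prop_silt_mut} (2) the summand indexing is consistent. Hence $\mu_j^+(T)$ is the 2-term mutation at $T_j$, in accordance with Proposition \ref{Prop_two_compl}, since $Y_j^- \ne 0$.
\end{itemize}
The remaining task is to identify the label of $\alpha\colon \mu_j^+(T)\to T$. By the definition of labels this is $(\SH(\mu_j^+(T)))_j^+$, which is not immediately $Y_j^-$.

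I would establish this identity through Proposition \ref{Prop_label_filt}. Let $U'$ be the maximal common summand $T/T_j$. Then $\ovcalT_{U'}\cap\ovcalF_{U'}=\Filt Z$, where $Z$ is the label. On the other hand, $U'$ has minimal completion $T$. By Proposition \ref{Prop_X^-_Y^+_U} (2), applied to $U'$ in place of $U$, its unique brick $X'^+_j$ equals the corresponding $Y_j^-$ of its minimal completion $T$. This gives $Z=Y_j^-=X_j^+$.

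Alternatively, one can transport through $\reduc$: the label of $\mu_{k}^+(B[1]) \to B[1]$ is the simple $L(k)_B$, dually to Example \ref{Ex_A_source}. Then $\Phi$ and Theorem \ref{Thm_sim_W_U} (1) identify this label with $\Phi(Y_j^-)=L(j-m)_B$.

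The main obstacle is keeping the index bookkeeping consistent, so that the index $j$ in $\mu_j^+(T)$ matches $L(j-m)_B$ under $\reduc$.
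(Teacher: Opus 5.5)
Your proposal is correct and follows the paper's proof. Part (1) comes from Proposition \ref{Prop_X^-_Y^+_U}~(1) plus the criterion for which irreducible mutation stays 2-term (you cite Proposition \ref{Prop_two_compl}, the paper cites Theorem \ref{Thm_within}); part (2) comes from Proposition \ref{Prop_reduc_source} and Theorem \ref{Thm_sim_W_U}; and part (3) is the dual, which you write out in more detail, with the only superfluous step being the label computation: setting $X':=\SH(\mu_j^+(T))$, Theorem \ref{Thm_smc_mut}~(2) gives $Y=\SH(T)=\mu_j^-(X')$, hence $Y_j=X'_j[1]$, so the label $(X'_j)^+$ equals $Y_j^-$ at once without Proposition \ref{Prop_label_filt}.
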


\begin{proof}
(1)
By Proposition \ref{Prop_X^-_Y^+_U} (1), 
we have $X_i^- \ne 0$ or $Y_i^+ \ne 0$.
Thus Theorem \ref{Thm_within} implies that
$\mu_i^+(S) \in \twosilt A$ if $X_i^- \ne 0$, 
and $\mu_i^-(T) \in \twosilt A$ if $Y_i^+ \ne 0$.
In the former case, $Q(\twosilt A)$ has an arrow $\mu_i^+(S) \to S$,
and in the latter case, $Q(\twosilt A)$ has an arrow $T \to \mu_i^-(T)$.
Such an arrow cannot be in $Q(\twosilt_U A)$, since $S_i=T_i=U_i$.

(2) follows from Proposition \ref{Prop_reduc_source} 
and Theorem \ref{Thm_sim_W_U}.

(3) is the dual of (2).
\end{proof}

Note that there may be arrows starting at $S$ 
in the whole quiver $Q(\twosilt A)$ other than 
$S \to \mu_j^-(S)$ for $j \in \{m+1,\ldots,n\}$.
Such arrows go outside $Q(\twosilt_U A)$.
The following picture shows the arrows in the quiver $Q(\twosilt A)$ 
around $S$ and $T$, where the gray region denotes $Q(\twosilt_U A)$.
\begin{align*}
\begin{tikzpicture}[->,baseline=0pt,xscale=1, yscale=0.7]
\fill[black!10] (0,3.5)--(-6,1.5)--(-6,-1.5)--
(0,-3.5)--(6,-1.5)--(6,1.5)--cycle;
\node (S0) at ( 0, 3) {$S$};
\node (S1) at ( 7, 5) {$\mu_{i_1}^+(S)$};
\node (S2) at (-8, 2) {$\mu_{i_2}^-(S)$};
\node (S3) at ( 4, 5) {$\mu_{i_3}^+(S)$};
\node (S4) at (-5, 1) {$\mu_{m+1}^-(S)$};
\node (S5) at (-1, 1) {$\mu_{m+2}^-(S)$};
\node (S6) at ( 2, 1) {$\cdots$};
\node (S7) at ( 5, 1) {$\mu_n^-(S)$};
\node (CC) at ( 0, 0) {$\vdots$};
\node (T0) at ( 0,-3) {$T$};
\node (T1) at ( 7,-2) {$\mu_{i_1}^+(T)$};
\node (T2) at (-8,-4) {$\mu_{i_2}^-(T)$};
\node (T3) at ( 4,-4) {$\mu_{i_3}^-(T)$};
\node (T4) at (-5,-1) {$\mu_{m+1}^+(T)$};
\node (T5) at (-1,-1) {$\mu_{m+2}^+(T)$};
\node (T6) at ( 2,-1) {$\cdots$};
\node (T7) at ( 5,-1) {$\mu_n^+(T)$};
\draw (S0) to[edge label ={$\scriptstyle X_{m+1}$},pos=0.7] (S4);
\draw (S0) to[edge label ={$\scriptstyle X_{m+2}$},pos=0.4] (S5);
\draw (S0) to[edge label'={$\scriptstyle X_n$},    pos=0.7] (S7);
\draw (S1) to[edge label ={$\scriptstyle X_{i_1}^-$},pos=0.3] (S0);
\draw (S0) to[edge label'={$\scriptstyle X_{i_2}^+$},pos=0.7] (S2);
\draw (S3) to[edge label'={$\scriptstyle X_{i_3}^-$},pos=0.3] (S0);
\draw (T4) to[edge label ={$\scriptstyle X_{m+1}$},pos=0.3] (T0);
\draw (T5) to[edge label ={$\scriptstyle X_{m+2}$},pos=0.6] (T0);
\draw (T7) to[edge label'={$\scriptstyle X_n$},    pos=0.3] (T0);
\draw (T1) to[edge label ={$\scriptstyle Y_{i_1}^-$},pos=0.3] (T0);
\draw (T0) to[edge label ={$\scriptstyle Y_{i_2}^+$},pos=0.7] (T2);
\draw (T0) to[edge label'={$\scriptstyle Y_{i_3}^+$},pos=0.7] (T3);
\end{tikzpicture}
\end{align*}

From Propositions \ref{Prop_X^-_Y^+_U} (1) and \ref{Prop_label_S_T} (1),
we have information on arrows involving $S$ and $T$ 
not within $Q(\twosilt_U A)$.
For each $i \in \{1,\ldots,m\}$, we have the following three possibilities.
\begin{itemize}
\item[(i)]
There exist arrows $\mu_i^+(S) \to S$ and $\mu_i^+(T) \to T$;
or equivalently, $X_i^- \ne 0$ and $Y_i^- \ne 0$.
\item[(ii)]
There exist arrows $S \to \mu_i^-(S)$ and $T \to \mu_i^-(T)$;
or equivalently, $X_i^+ \ne 0$ and $Y_i^+ \ne 0$.
\item[(iii)]
There exist arrows $\mu_i^+(S) \to S$ and $T \to \mu_i^-(T)$;
or equivalently, $X_i^- \ne 0$ and $Y_i^+ \ne 0$.
\end{itemize}
The indices $i_1,i_2,i_3$ in the picture satisfy 
(i), (ii), (iii), respectively.
Moreover the formulae in Theorem \ref{Thm_endotop_U} allow us 
to calculate the labels of the arrows ending at $S$ or starting at $T$;
namely, $X_{i_1}^-,X_{i_3}^-,Y_{i_2}^+,Y_{i_3}^+$ in the picture.

On the other hand, 
Theorem \ref{Thm_sim_W_U} and Proposition \ref{Prop_label_S_T} (2)
are properties inside $Q(\twosilt_U A)$.
The vertex $S$ is the unique source of $Q(\twosilt_U A)$.
The arrows starting at $S$ in $Q(\twosilt_U A)$ are
labeled by the bricks $X_{m+1},\ldots,X_n$,
which are the simple objects of $\calW_U$.
This corresponds to that there are $n-m$ arrows starting at
the unique source $B$ in $Q(\twosilt B)$,
and that they are labeled by the simple $B$-modules
as in the following picture.
\begin{align*}
\begin{tikzpicture}[->,baseline=0pt,xscale=1, yscale=0.7]
\fill[black!10] (0,3.5)--(-6,1.5)--(-6,-1.5)--
(0,-3.5)--(6,-1.5)--(6,1.5)--cycle;
\node (S0) at ( 0, 3) {$B$};
\node (S4) at (-5, 1) {$\mu_1^-(B)$};
\node (S5) at (-1, 1) {$\mu_2^-(B)$};
\node (S6) at ( 2, 1) {$\cdots$};
\node (S7) at ( 5, 1) {$\mu_{n-m}^-(B)$};
\node (CC) at ( 0, 0) {$\vdots$};
\node (T0) at ( 0,-3) {$B[1]$};
\node (T4) at (-5,-1) {$\mu_1^+(B[1])$};
\node (T5) at (-1,-1) {$\mu_2^+(B[1])$};
\node (T6) at ( 2,-1) {$\cdots$};
\node (T7) at ( 5,-1) {$\mu_{n-m}^+(B[1])$};
\draw (S0) to[edge label ={$\scriptstyle L(1)_B$},  pos=0.7] (S4);
\draw (S0) to[edge label ={$\scriptstyle L(2)_B$},  pos=0.4] (S5);
\draw (S0) to[edge label'={$\scriptstyle L(n-m)_B$},pos=0.7] (S7);
\draw (T4) to[edge label ={$\scriptstyle L(1)_B$},  pos=0.3] (T0);
\draw (T5) to[edge label ={$\scriptstyle L(2)_B$},  pos=0.6] (T0);
\draw (T7) to[edge label'={$\scriptstyle L(n-m)_B$},pos=0.3] (T0);
\end{tikzpicture}
\end{align*}

We end this paper with another description of the algebra $B=B_U$.
Sending each $f \in \End_{\sfD(A)}(S)$ to
$H^0(f) \in \End_{A}(H^0(S))$ gives a homomorphism 
$\phi \colon \End_{\sfD(A)}(S) \to \End_{A}(H^0(S))$
of algebras.
We set $e' \in \End_{\sfD(A)}(S)$ as the idempotent $e' \colon S \to U \to S$,
which satisfies $\phi(e')=e$.
Thus $\phi$ induces a homomorphism 
$\phi \colon \End_{\sfD(A)}(S)/\langle e' \rangle 
\to \End_{A}(H^0(S))/\langle e \rangle$ of algebras,
and the functor $\mod \End_{A}(H^0(S))/\langle e \rangle
\to \mod \End_{\sfD(A)}(S)/\langle e' \rangle$.

\begin{Thm}\label{Psi Phi}
Let $U \in \twopsilt A$.
Then we have $\calW_J=\calW_U$ in $\sfD(A)$, and the commutative diagram
\begin{align*}
\begin{tikzpicture}[->,baseline=(00.base)]
\node (00) at ( 0, 0) {};
\node (-+) at (-3, 1) {$\calW_J$};
\node (++) at ( 3, 1) {$\mod \End_{\sfD(A)}(S)/\langle e' \rangle$};
\node (--) at (-3,-1) {$\calW_U$};
\node (+-) at ( 3,-1) {$\mod \End_A(H^0(S))/\langle e \rangle$};
\draw (--) to[edge label={$\scriptstyle \Hom_A(H^0(S),?)$}] (+-);
\draw (-+) to[edge label={$\scriptstyle \Hom_{\sfD(A)}(S,?)$}] (++);
\draw[double,-] (-+) to (--);
\draw (+-) to[edge label={$\scriptstyle \phi$}] (++);
\end{tikzpicture}
\end{align*}
with all functors being category equivalences.
Therefore $\phi \colon \End_{\sfD(A)}(S)/\langle e' \rangle \to 
\End_A(H^0(S))/\langle e \rangle$ is an isomorphism.
\end{Thm}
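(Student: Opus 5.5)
First I will show $\calW_J=\calW_U$ as subcategories of $\sfD(A)$. Here $\calW_J=\Filt\{X_{m+1},\ldots,X_n\}$. By Proposition \ref{Prop_X^-_Y^+_U} (2), $X_j=X_j^+\in\mod A$ for $j\in\{m+1,\ldots,n\}$. By Theorem \ref{Thm_sim_W_U} (2), $\calW_U=\Filt\{X_{m+1}^+,\ldots,X_n^+\}$ with the filtration taken in $\mod A$. Short exact sequences in $\mod A$ are exactly the triangles in $\sfD(A)$ whose three terms lie in $\mod A$. So it suffices to check that $\Filt$ taken in $\sfD(A)$ of objects of $\mod A$ stays inside $\mod A$. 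This holds because $\mod A$ is the heart of the standard t-structure and is therefore extension-closed in $\sfD(A)$. Hence $\calW_J=\calW_U$.

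Next I will show that both horizontal functors are equivalences. For the top row, $\calW_J$ is a Serre subcategory of $\calH_S$ by the discussion before Theorem \ref{Thm_smc_mut_approx}. By Proposition \ref{Prop_E_S} (2), $\Psi_S=\Hom_{\sfD(A)}(S,?)$ is an equivalence $\calH_S\to\mod E_S$, and by (3) it sends the simples $X_j$ to the tops of $\Psi_S(S_j)$. Consequently $\calW_J$ is sent onto the modules whose composition factors avoid the tops of $\Psi_S(U_i)$, $i\le m$. These are exactly the modules $M$ with $Me'=\Hom(\Psi_S(U),M)=0$, which form $\mod E_S/\langle e'\rangle$. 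For the bottom row, Proposition \ref{Prop_reduc} (2) already gives the equivalence $\Phi=\Hom_A(H^0(S),?)\colon\calW_U\to\mod B$.

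Then I will check commutativity. For $M\in\calW_U\subset\mod A$, I compare $\Hom_{\sfD(A)}(S,M)$ with $\Hom_A(H^0(S),M)$. Since $S$ is 2-term and $M$ is a module, every map $S\to M$ factors uniquely through the truncation $S\to H^0(S)$. This yields a natural isomorphism $\Hom_{\sfD(A)}(S,M)\cong\Hom_A(H^0(S),M)$. For $f\in\End_{\sfD(A)}(S)$, the right action on the left side corresponds to precomposition with $H^0(f)=\phi(f)$. So, as $\End_{\sfD(A)}(S)$-modules, $\Hom_{\sfD(A)}(S,M)$ is the restriction along $\phi$ of $\Hom_A(H^0(S),M)$, which is what the square asserts. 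Since three of the functors are equivalences and the square commutes up to natural isomorphism, the restriction functor along $\phi$ is also an equivalence on module categories.

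Finally I will deduce that $\phi\colon E_S/\langle e'\rangle\to B$ is an isomorphism. Restriction along an algebra homomorphism is faithful and injective on Hom-sets. Being full, it forces $\phi$ to be surjective: otherwise $B$, viewed as a bimodule, would acquire extra endomorphisms. Being essentially surjective onto all $E_S/\langle e'\rangle$-modules forces $\phi$ to be injective: otherwise a faithful module of the source would not come from $B$. Concretely, I plan to compare the dimension of the regular module. The image of $B_B$ is a module over the source whose annihilator contains $\Ker\phi$, and every module over the source is a subquotient of sums of it, so $\Ker\phi$ annihilates every module and must be zero. For surjectivity, restriction of $B_B$ has endomorphism ring $\End_B(B)$ only if $\phi$ is onto, by fullness applied to the left multiplications. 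I expect the main obstacle to be this final algebraic step, namely making the kernel and cokernel arguments rigorous. The identification $\Hom_{\sfD(A)}(S,M)\cong\Hom_A(H^0(S),M)$ is routine.
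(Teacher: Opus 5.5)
Your outline follows the paper's own proof, which shows $\calW_J=\calW_U$ by extension-closedness of $\mod A$ in $\sfD(A)$ and defers the top equivalence, the commutativity and the isomorphism to the arguments of \cite[Theorems 3.15, 3.16]{A-semi}. Your first three steps are sound: the Serre-subcategory/idempotent description of the top row, the natural isomorphism $\Hom_{\sfD(A)}(S,M)\cong\Hom_A(H^0(S),M)$ compatible with $\phi$, and the conclusion that restriction along $\bar\phi\colon R:=\End_{\sfD(A)}(S)/\langle e'\rangle\to B$ is an equivalence. Your injectivity argument is also correct: every $R$-module, in particular $R_R$, is isomorphic to a restricted module and is therefore annihilated by $\Ker\bar\phi$.

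The surjectivity argument, however, does not work. Fullness of restriction, even full faithfulness, only says that $\bar\phi$ is a ring epimorphism. Injective ring epimorphisms of finite dimensional algebras need not be surjective.

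Here is a concrete counterexample. Let $C\subset M_2(K)$ be the upper triangular matrices. As a right $C$-module, $M_2(K)\cong P^{\oplus 2}$, where $P$ is the row space, an indecomposable projective $C$-module with $\End_C(P)=K$. So $\End_C(M_2(K)_C)\cong M_2(K)$ consists exactly of the left multiplications, and restriction $\mod M_2(K)\to\mod C$ is fully faithful, yet $C\ne M_2(K)$. Thus the claim that fullness on $B_B$ (the restriction of $B_B$ having endomorphism ring $\End_B(B)$) forces $\bar\phi$ to be onto is false, and you need a different input.

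The simplest fix is that $\phi$ is already surjective before passing to quotients. Write $S=(S^{-1}\xrightarrow{d}S^0)$ and let $g\in\End_A(H^0(S))$.
\begin{enumerate}
\item By projectivity of $S^0$, lift the composite $S^0\to H^0(S)\xrightarrow{g}H^0(S)$ through the surjection $S^0\to H^0(S)$ to some $g^0\colon S^0\to S^0$.
\item The map $g^0d$ has image in $\Im d$, so by projectivity of $S^{-1}$ it lifts to $g^{-1}\colon S^{-1}\to S^{-1}$ with $dg^{-1}=g^0d$.
\item The chain map $(g^{-1},g^0)$ satisfies $H^0(g^{-1},g^0)=g$.
\end{enumerate}
Hence $\bar\phi$ is onto, and together with your injectivity argument it is an isomorphism.

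Alternatively, use that restriction preserves $K$-dimensions. Then $R_R\cong\bar\phi^*(N)$ for a projective $B$-module $N$ with $\dim_K\Hom_B(N,L)=\dim_K L$ for every simple $B$-module $L$. This forces $N\cong B_B$, so $\dim_K R=\dim_K B$.
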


\begin{proof}
Set $\calC:=\{X_{m+1}^+,\ldots,X_n^+\}$.
By definition, 
$\calW_J$ is the filtration closure $\Filt_{\sfD(A)} \calC$ of $\calC$ 
in $\sfD(A)$,
and by Theorem \ref{Thm_sim_W_U},
$\calW_U$ is the filtration closure $\Filt_{\mod A} \calC$ of $\calC$ 
in $\mod A$.
Since $\mod A$ is closed under extensions in $\sfD(A)$, 
we have $\calW_J=\calW_U$.

Proposition \ref{Prop_reduc} (2) is nothing but that
the bottom horizontal functor 
$\calW_U \to \mod \End_A(H^0(S))/\langle e \rangle$ is an equivalence.

Setting $E_S:=\End_{\sfD(A)}(S)$, the top horizontal functor 
$\Hom_{\sfD(A)}(S,?) \colon \calW_J \to 
\mod E_S/\langle e' \rangle$
is obtained as a restiction of 
the equivalence $\Hom_{\sfD(A)}(S,?) \colon \calH_S \to \mod E_S$
in Proposition \ref{Prop_E_S},
by the same argument as the proof of \cite[Theorem 3.15]{A-semi}.

From a similar argument to the proof of \cite[Theorem 3.16]{A-semi},
we have the commutativity of the diagram and 
that $\phi$ is isomorphic.
\end{proof}


\begin{thebibliography}{STTVW}
\bibitem[AIR]{AIR} 
T. Adachi, O. Iyama, I. Reiten, 
\emph{$\tau$-tilting theory}, 
Compos. Math. \textbf{150}.3 (2014), 415--452.
\bibitem[Ai]{Ai} 
T. Aihara, 
\emph{Tilting-connected symmetric algebras}, 
Algebr. Represent. Theory \textbf{16}.3 (2013), 873--894.
\bibitem[AI]{AI}
T. Aihara, O. Iyama, 
\emph{Silting mutation in triangulated categories}, 
J. Lond. Math. Soc. (2) \textbf{85}.3 (2012), 633--668.
\bibitem[Al]{Al-Nofayee} 
S. Al-Nofayee, 
\emph{Simple objects in the heart of a $t$-structure}, 
J. Pure Appl. Algebra \textbf{213}.1 (2009), 54--59.
\bibitem[AnS]{AnS}
L. Angeleri H\"{u}gel, F. Sentieri,
\emph{Wide coreflective subcategories and torsion pairs},
J. Algebra \textbf{664} (2025), 164--205.
\bibitem[AHIKM]{AHIKM}
T. Aoki, A. Higashitani, O. Iyama, R. Kase, Y. Mizuno,
\emph{Fans and polytopes in tilting theory I: Foundations},
arXiv:2203.15213v4, to appear in Compos. Math.
\bibitem[As1]{A-semi} 
S. Asai, 
\emph{Semibricks}, 
Int. Math. Res. Not. \textbf{2020}.16 (2020), 4993--5054.
\bibitem[As2]{A} 
S. Asai, 
\emph{The wall-chamber structures of the real Grothendieck groups}, 
Adv. Math. \textbf{381} (2021), Paper No. 107615.
\bibitem[As3]{A-bicpt}
S. Asai,
\emph{Bicompact torsion classes and conjectures on brick infinite algebras},
arXiv:2604.04505v1.
\bibitem[As4]{A-interval}
S. Asai,
\emph{The interval neighborhoods in the real Grothendieck groups},
arXiv:2604.17962v1.
\bibitem[AuS]{AS} 
M. Auslander, S. O. Smal\o, 
\emph{Almost split sequences in subcategories}, 
J. Algebra \textbf{69}.2 (1981), 426--454.
\bibitem[BCZ]{BCZ}
E. Barnard, A. Carroll, S. Zhu,
\emph{Minimal inclusions of torsion classes},
Algebr. Comb. \textbf{2}.5 (2019), 879--901. 
\bibitem[BDH]{BDH}
E. Barnard, C. Defant, E. Hanson,
\emph{Pop-Stack Operators for Torsion Classes and Cambrian Lattices},
arXiv:2312.03959v1.
\bibitem[BBD]{BBD}
A. A. Be\u{\i}linson, J. Bernstein, P. Deligne, 
\emph{Faisceaux pervers},
Analysis and Topology on Singular Spaces, I, Luminy, 1981,
Ast\'{e}risque 100, Soci\'{e}t\'{e} math\'{e}matique de France, 1982, 5--171.
\bibitem[BCPW]{BCPW}
N. Broomhead, R. Coelho Sim\~{o}es, D. Pauksztello, J. Woolf,
\emph{Simple tilts of length hearts and simple-minded mutation},
arXiv:2401.02947v2.
\bibitem[BY]{BY} 
T. Br\"ustle, D. Yang, \emph{Ordered exchange graphs}, 
Advances in representation theory of algebras, 135--193, 
EMS Ser. Congr. Rep., Eur. Math. Soc., Z\"urich, 2013.
\bibitem[DW]{DW}
C. Defant, N. Williams, 
\emph{Semidistrim lattices}, 
Forum Math. Sigma \textbf{11}.50 (2023), 1--35.
\bibitem[D]{Demonet}
L. Demonet,
\emph{Combinatorics of Mutations in Representation Theory}, 
l'Habilitation \`{a} Diriger des Recherches, 
Laboratoire de Math\'{e}atiques Nicolas Oresme, 
Universit\'{e} de Caen, 2017, 
\url{https://www.math.nagoya-u.ac.jp/~demonet/recherche/habilitation.pdf}.
\bibitem[DIJ]{DIJ}
L. Demonet, O. Iyama, G. Jasso, 
\emph{$\tau$-Tilting Finite Algebras, Bricks, and g-Vectors}, 
Int. Math. Res. Not. \textbf{2019}.3 (2019), 852--892.
\bibitem[DIRRT]{DIRRT} 
L. Demonet, O. Iyama, N. Reading, I. Reiten, H. Thomas, 
\emph{Lattice theory of torsion classes: Beyond $\tau$-tilting theory},
Trans. Amer. Math. Soc. Ser. B \textbf{10} (2023), 542--612.
\bibitem[DF]{DF} 
H. Derksen, J. Fei, 
\emph{General presentations of algebras}, 
Adv. Math. \textbf{278} (2015), 210--237.
\bibitem[E]{Enomoto}
H. Enomoto,
\emph{Monobrick, a uniform approach to torsion-free classes 
and wide subcategories},
Adv. Math. \textbf{393} (2021), 108113.
\bibitem[FZ]{FZ} 
S. Fomin, A. Zelevinsky, 
\emph{Cluster algebras. I. Foundations}, 
J. Amer. Math. Soc. \textbf{15}.2 (2002), 497--529.
\bibitem[F]{Fushimi}
R. Fushimi,
\emph{The correspondence between silting objects and 
$t$-structures for non-positive dg algebras},
arXiv:2312.17597v3.
\bibitem[GZ]{GZ}
E. Gupta, Y. Zhou, 
\emph{Semibricks and wide subcategories in extended module categories},
arXiv:2511.08157v1.
\bibitem[HW]{HW}
W. Hara, M. Wemyss,
\emph{Spherical objects in dimensions two and three},
J. Eur. Math. Soc. \textbf{28}.4 (2026), 1771--1808.
\bibitem[IY]{IY}
O. Iyama, D. Yang, 
\emph{Silting reduction and Calabi–Yau reduction of triangulated categories}, 
Trans. Amer. Math. Soc. \textbf{370} (2018), 7861--7898.
\bibitem[Ja]{Jasso} 
G. Jasso, 
\emph{Reduction of $\tau$-tilting modules and torsion pairs},
Int. Math. Res. Not. \textbf{2015}.16 (2015), 7190--7237. 
\bibitem[Ji]{Jin}
H. Jin,
\emph{Reductions of triangulated categories and simple-minded collections},
J. London Math. Soc. (2) \textbf{107}.4 (2023), 1482--1518.
\bibitem[J\o]{Jorgensen}
P. J{\o}rgensen, 
\emph{Torsion classes and t-structures in higher homological algebra},
Int. Math. Res. Not. \textbf{2016}.13 (2016), 3880--3905. 
\bibitem[Ka]{Kalck}
M. Kalck,
\emph{A finite dimensional algebra with a phantom
(a corollary of an example by J. Krah)},
arXiv:2304.08417v1.
\bibitem[KN]{KN}
B. Keller and P. Nicol\'{a}s, 
\emph{Weight structures and simple dg modules for positive dg algebras}, 
Int. Math. Res. Not. \textbf{2013}.5 (2013), 1028--1078.
\bibitem[KV]{KV} 
B. Keller, D. Vossieck, 
\emph{Aisles in derived categories}, 
Deuxi\`{e}me Contact Franco-Belge en Alg\`{e}bre (Faulx-les-Tombes, 1987), 
Bull. Soc. Math. Belg. S\'{e}r. A \textbf{40}.2 (1988), 239--253.
\bibitem[KY]{KY} 
S. Koenig, D. Yang,
\emph{Silting objects, simple-minded collections, 
t-structures and co-t-structures for finite dimensional algebras},
Doc. Math. \textbf{19} (2014), 403--438.
\bibitem[Kr]{Krah}
J. Krah,
\emph{A phantom on a rational surface}, 
Invent. math. \textbf{235} (2024), 1009--1018. 
\bibitem[MP]{MP}
K. Mousavand, C. Paquette,
\emph{Hom-orthogonal modules and brick-Brauer-Thrall conjectures},
J. Algebra \textbf{686} (2026), 650--676.
\bibitem[Pf]{Pfeifer} 
C. Pfeifer,
\emph{Remarks on $\tau$-tilted versions 
of the second Brauer-Thrall Conjecture},
Bull. Lond. Math. Soc. \textbf{57}.5 (2025), 1568--1583.
\bibitem[Pl]{Plogmann}
M. Plogmann,
\emph{Complicial simple-minded collections},
arXiv:2603.03122v2.
\bibitem[R1]{Rickard}
J. Rickard, 
\emph{Morita theory for derived categories},
J. London Math. Soc. (2) \textbf{39}.3 (1989), 436--456.
\bibitem[R2]{Rickard-symm}
J. Rickard,
\emph{Equivalences of derived categories for symmetric algebras},
J. Algebra \textbf{257} (2002), 460--481.
\bibitem[STV]{STV}
S. Schroll, H. Treffinger, Y. Valdivieso,
\emph{On band modules and $\tau$-tilting finiteness},
Math. Z. \textbf{299} (2021), 2405--2417.
\bibitem[S]{Sentieri}
F. Sentieri,
\emph{A brick version of a theorem of Auslander},
Nagoya. Math. \textbf{249} (2023), 88--106.
\bibitem[SZ]{SZ}
Y. Sun, Y. Zhang,
\emph{Gluing simple-minded collections in triangulated categories},
J. Algebra \textbf{645} (2024), 54--85.
\bibitem[Z]{Zhang}
Y. Zhang,
\emph{Reduction of Wide Subcategories and Recollements},
Algebra Colloq. \textbf{30}.4 (2023), 713--720.
\end{thebibliography}
\end{document}